\documentclass[12pt,twoside,a4paper]{amsart}

\usepackage[margin=2.3cm]{geometry}

\usepackage{amssymb,mathtools}
\usepackage{xcolor}
\usepackage{hyperref}

\newtheorem{thm}{Theorem}[section]
\newtheorem{remark}{Remark}
\newtheorem{lem}[thm]{Lemma}
\newtheorem{prop}[thm]{Proposition}
\newtheorem{cor}[thm]{Corollary}
\theoremstyle{definition}
\newtheorem{defn}[thm]{Definition}
\theoremstyle{remark}

\numberwithin{equation}{section}
\allowdisplaybreaks

\newcommand{\R}{\mathbb{R}}
\newcommand{\Z}{\mathbb{Z}}
\newcommand{\N}{\mathbb{N}}

\newcommand{\HD}{\dim_H}
\newcommand{\Hau}{\mathcal{H}}

\newcommand{\supp}{\operatorname{supp}}

\begin{document}

\keywords{Hausdorff dimension, nonlinear Roth theorem, S\'ark\"ozy type problem}
\subjclass[2020]{35S30, 28A80, 44A12}

\title[A curved three-point pattern problem]{A curved three-point pattern problem for fractal sets on the real line}

\author{Surjeet Singh Choudhary} 
\address{National Center for Theoretical Sciences, National Taiwan University, Taipei 106, Taiwan.}
\email{surjeet19@ncts.ntu.edu.tw}

\author{Chong-Wei Liang}
\address{Department of Mathematics, National Taiwan University, Taiwan.}
\email{d10221001@ntu.edu.tw}

\author{Chun-Yen Shen} 
\address{Department of Mathematics, National Taiwan University, Taiwan.}
\email{cyshen@math.ntu.edu.tw}

\date{}

\begin{abstract}
We study the occurrence of curved three-point configurations in fractal subsets of the real line. We prove that if \(E \subset [0,1]\) is a compact set with sufficiently large Hausdorff dimension, then \(E\) contains a curved three-point progression associated with a broad class of nonlinear functions.

Our approach can also show the existence of the curved three-point pattern under the assumption that the Hausdorff content of \(E\) is bounded away from zero. The class of functions includes, in addition to polynomials with vanishing constant term, nonlinear functions such as
\[
t^k \log(1+t), \quad \forall k \geq 1.
\]
\end{abstract}

\maketitle
\section{Introduction}
The study of arithmetic patterns in sets of integers has been a central theme in additive combinatorics. A landmark result in this direction is Roth's theorem \cite{MR0051853}, which asserts that any subset of the integers with positive density contains a nontrivial three-term arithmetic progression. This was later generalized by Szemerédi \cite{MR0369312}, who proved that any subset of the integers with positive density contains arbitrarily long arithmetic progressions. Moreover, the celebrated theorem of Green and Tao \cite{MR2415379} showed that the primes themselves contain arbitrarily long arithmetic progressions, revealing a deep and unexpected structure within a sparse set.

These results have inspired a parallel line of inquiry in geometric measure theory and harmonic analysis: to what extent do analogous pattern-forming phenomena persist in subsets of Euclidean space with large Hausdorff dimension? In this continuous setting, density is replaced by dimensional considerations, and arithmetic structure is studied through analytic and geometric methods.

A natural starting point is to consider linear configurations of the form
\[
\{x, x-t, x-2t\}.
\]
In contrast with the discrete setting, it is known that such patterns can fail dramatically: there exist compact sets $E \subset [0,1]$ with full Hausdorff dimension $\dim_H(E)=1$ that do not contain any nontrivial three-term arithmetic progression \cite{MR1704757}. This striking phenomenon highlights a fundamental obstruction in the continuous setting and shows that large Hausdorff dimension alone is insufficient to guarantee the existence of linear patterns. For more results in this direction, refer the papers \cite{MR3016405} of Maga, \cite{MR3672917} of Máthé, \cite{MR4238597} of
Denson, Łaba, and Zahl, and \cite{MR4305962} of Yavicoli. 

This raises a natural question: {\it Can one recover Roth-type phenomena by moving beyond linear configurations?}

Many nonlinear configuration existence theorems require strong Fourier analytic hypothesis, such as the lower bound for the Fourier dimension, or the existence of a measure that obeys a ball condition and a Fourier condition \cite{MR3481177,MR4418720,MR2545245,MR3531369}. It was unknown that whether these Fourier analytic hypothesis is necessary. The first breakthrough is attained by Kuca, Orponen and Sahlsten in \cite{MR4609784}, who established a continuous S\'ark\"ozy type theorem for a parabola. Indeed, they construct a measure with energy and spectral gap condition, which avoids the assumption on Fourier analytic condition. Later, Bruce and Pramanik in \cite{MR4966567} employ their construction to prove the existence of a two-point pattern determined by curves. Apart from the two-point pattern result on higher dimension $d\geq2$, Zhu in \cite{MR5038694} utilizes this idea as well as the established smoothing inequality in \cite{MR4418720} to show the existence of the three-point pattern in real line for the quadratic polynomials. 

 Various works have established the existence of nonlinear patterns in sets of large Hausdorff dimension, particularly for polynomial-type configurations \cite{MR0942826,GDH,MR4831142,MR4410765,XSMW}. These results suggest that nonlinear structures may exhibit greater rigidity than their linear counterparts in the fractal setting. A large portion of these arguments depend heavily on the smoothing inequality. To be more specific, the Fourier method approach suggests that the non-zero lower bound for the configuration integral guarantees the existence of the pattern. The smoothing inequality plays a crucial role when estimating the configuration integral. 

In this paper, we investigate nonlinear three-point configurations of the form
\begin{align*}
\left\{x,\, x-t,\, x-\gamma(t)\right\},
\end{align*}
where $\gamma$ belongs to a certain class of nonlinear functions. Our main result shows that, under a natural non-degeneracy condition on $\gamma$, such configurations must occur in any set of sufficiently large Hausdorff dimension. 

Before introducing our family of nonlinear functions, we need some notations.\\ Let $\Theta$ be a family of real-analytic curves $\gamma:[0,1]\to \R$ which is not an affine map with $\gamma(0)=0$ satisfies that\\
    $(1)$\,There do not exist $a,b_1,b_2\in\mathbb{C}$ with $a,b_1\ne 0$ such that $\gamma$ is of the form $\frac{1}{a}\log(b_1e^{at}+b_2)$,\\
$(2)$\,
For all $t \in (0,1)$, $\gamma'(t)\neq0$ and $|1-\gamma'(t)|+|\gamma''(t)|\neq0$,\\
$(3)$\,$|\gamma(t)|\lesssim|t|$ for all sufficiently small $t$.
\smallskip

Here comes our family of curves.
\begin{defn}
Let $M\geq1$ be an integer. Define $\Theta(M)$ to be the family of curves contained in $\Theta$ such that $|\gamma(t)|\leq M|t|$ for all sufficiently small $t$.
\end{defn}

We give some examples that are not polynomial and are contained in our family of curves. Let $I=[0,1]$, then one can take $\gamma$ to be the following curves:
\begin{align*}
    t^2\log(1+t),\,t-\sin(t),\,\tan(t)-t,\,t-\arctan(t),\,\arcsin(t)-t
\end{align*}
or any linear combinations of these curves; besides, all the functions of the form $t^k\log(1+t)$ for $k\geq1$ are in the class $\Theta(1)$. \\

Given a function $\gamma(t)$ and $\lambda>0$, throughout this paper, we denote $\gamma_{\lambda}(t): = \lambda^{-1}\gamma(\lambda t)$.

\begin{thm}\label{mainthm}
Let $\gamma \in \Theta(1)$. Then there exists $\varepsilon = \varepsilon_\gamma \in (0,1)$ such that any compact set $E \subset [0,1]$ with
\[
\dim_H(E) > 1 - \varepsilon
\]
contains a nontrivial configuration of the form
\[
\{x,\ x - t,\ x - \gamma_\lambda(t)\}
\]
for some $x \in \mathbb{R}$ and $\lambda, t > 0$.
\end{thm}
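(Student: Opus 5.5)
We follow the Kuca--Orponen--Sahlsten scheme, as adapted by Zhu for quadratic polynomials. \textbf{First}, from a compact set $E$ with $\HD(E)>1-\varepsilon$ we extract a Frostman-type probability measure $\mu$ supported on $E$. We then pass to a suitable scale and zoom via the rescaling $x\mapsto \lambda^{-1}x$; the dilation parameter $\lambda$ is exactly why the curve appears as $\gamma_\lambda$ in the statement of Theorem~\ref{mainthm}. The construction gives a measure with two properties:
\begin{itemize}
\item[(a)] an energy bound $I_{1-\varepsilon'}(\mu)\lesssim 1$;
\item[(b)] a spectral gap, i.e.\ $\widehat{\mu}(\xi)$ is negligible for $1\le |\xi|\le \lambda^{\delta}$ after subtracting a smooth bump.
\end{itemize}
This is the construction of \cite{MR4609784}, which we take as a black box.

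\textbf{Second}, we study the configuration functional
\[
\Lambda(\mu)=\iint \mu(x)\,\mu(x-t)\,\mu(x-\gamma_\lambda(t))\,\psi(t)\,dt\,dx,
\]
defined through mollifications $\mu_\delta=\mu*\phi_\delta$, with $\psi$ a bump supported in $[1/2,1]$. We split $\mu=\mu_{\mathrm{low}}+\mu_{\mathrm{high}}$ at frequency $\lambda^{\delta}$.
\begin{itemize}
\item The main term comes from the low-frequency parts, which are essentially a smooth density $\approx 1$ on an interval by the spectral gap. It is bounded below by a positive constant $c$.
\item Every term containing a high-frequency factor is controlled by a smoothing (Sobolev-improving) inequality for the bilinear operator
\[
T_\lambda(f,g)(x)=\int f(x-t)\,g(x-\gamma_\lambda(t))\,\psi(t)\,dt,
\]
of the form
\[
\|T_\lambda(f,g)\|_{L^1}\lesssim N^{-\sigma}\|f\|_{L^2}\|g\|_{L^2}
\]
when $f$ or $g$ has frequency support at $|\xi|\sim N$. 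Such an estimate is the continuous analogue of the curvature estimates in \cite{MR4418720}, with $\sigma$ independent of $\lambda\in(0,1]$.
\end{itemize}
Combining this decay with the energy bound (a), which controls $\|P_N\mu\|_{L^2}\lesssim N^{\varepsilon'/2}$, the high-frequency error sums to $O(\lambda^{-\delta(\sigma-\varepsilon')})$. This is small once $\varepsilon<\sigma/2$, so $\Lambda(\mu_\delta)\ge c/2$ uniformly in $\delta$.

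\textbf{Third}, we conclude by a standard weak-limit argument. Letting $\delta\to0$, the measures $\mu_\delta(x)\mu_\delta(x-t)\mu_\delta(x-\gamma_\lambda(t))\psi(t)$ have a weak limit with positive total mass, supported on $\{(x,t): x,\,x-t,\,x-\gamma_\lambda(t)\in E\}$ with $t\in[1/2,1]$. Hence $t>0$, and $t\ne\gamma_\lambda(t)$ on a set of positive measure, since $\gamma$ is not affine and $\gamma_\lambda$ is analytic. So a nontrivial configuration exists. The hypothesis $|\gamma(t)|\le |t|$ (class $\Theta(1)$) keeps $x-\gamma_\lambda(t)$ in the relevant window after rescaling.

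\textbf{Main obstacle.} The hardest step is the uniform-in-$\lambda$ smoothing inequality for a general real-analytic $\gamma$. Our plan is as follows.
\begin{itemize}
\item Reduce by $TT^*$ or a Cauchy--Schwarz step to oscillatory integrals with phase $\xi t+\eta\gamma_\lambda(t)$.
\item Apply van der Corput with the derivatives up to order $k$, where $k$ comes from analyticity. Non-vanishing of some derivative is guaranteed by conditions (2)--(3) of $\Theta$.
\item The non-degeneracy condition (1), which excludes $\gamma=\frac1a\log(b_1e^{at}+b_2)$, is exactly what rules out the degenerate case. In that case the relevant Wronskian or Jacobian vanishes identically, since these curves solve the ODE obtained by making the phase derivative functions linearly dependent. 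We would verify this through the sublevel-set estimate of \cite{MR4418720}.
\end{itemize}

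Uniformity as $\lambda\to0$ needs care: $\gamma_\lambda(t)\to\gamma'(0)t$ becomes linear. We handle this by choosing the scale $\lambda$ bounded below in the first step, so that $\gamma_\lambda$ stays in a compact family of non-degenerate curves. The smoothing constants then depend only on $\gamma$, and $\varepsilon=\varepsilon_\gamma$.
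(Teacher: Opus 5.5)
There is a genuine gap, and it sits at the point you flag as delicate. Inside the zoomed set you look for the \emph{rescaled} curve $\gamma_\lambda$, with $\lambda$ the zoom scale. Pulled back, that is a configuration for the original $\gamma$ in $E$. This is stronger than the theorem, and it forces every smoothing constant to be uniform in the zoom scale. They are not uniform: $\gamma_\lambda(t)\to\gamma'(0)t$, which is linear, and even $\equiv 0$ for $t^2\log(1+t)$, so no smoothing survives the limit. Your remedy, choosing $\lambda$ bounded below in the first step, is not available. The localization step only produces some $\mathbf{Q}\in D^*_J$ on which $E$ has nearly full relative content, and its size is bounded above but not below. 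Take $E=rE_0$ with $r\to0$: any such cube has size $\lesssim r$. Meanwhile $\varepsilon$ must depend on $\gamma$ alone. In addition, you tie the spectral-gap window and the low/high frequency cutoff to the zoom parameter itself. So your high-frequency error becomes small only as the zoom degenerates, which is exactly where the smoothing constants blow up. The two requirements pull in opposite directions, and the argument does not close.

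The repair, which is the paper's route, is to look in $\mathbf{T}_\mathbf{Q}(E\cap\mathbf{Q})$ for the \emph{unrescaled} $\gamma$, localized to $t\sim2^{-\ell}$ with $\ell$ fixed in terms of $\gamma$. Pulling back then gives $\{x,x-t,x-\gamma_\lambda(t)\}$ with $\lambda=2^{jN}$ the zoom factor; this is the actual reason $\gamma_\lambda$ appears in the statement. The cutoffs $A<B$ are then independent of the zoom. The gap on $[A^{1/4},B^2]$ is bought by the number of generations $T=T(B)$ in the construction, which fixes $\varepsilon$. The smoothing loss $2^{\kappa\ell}$ is a fixed constant, beaten by taking $B$ large.

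Separately, your smoothing step is only a plan. Van der Corput decay of the scalar multiplier $\int e^{2\pi i(\xi t+\eta\gamma(t))}\psi(t)\,dt$ does not by itself give $L^2\times L^2\to L^1$ decay for the bilinear operator. For general real-analytic $\gamma$ with the exponential--logarithm exclusion, the available input is the Christ--Zhou trilinear theorem. The paper converts it, by localization and complex interpolation, into a bound $\lesssim 2^{\kappa\ell}\|f_1\|_{H^{-\sigma}}\|f_2\|_{H^{-\sigma}}\|\mu\|_{H^{-\sigma}}$. That symmetric Sobolev form needs only the energy of $\mu$. Your $L^1$ bound, by contrast, must be paired with $\mu_\delta$ in $L^\infty$, whose norm blows up as $\delta\to0$. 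You would therefore also need a Frostman bound on the $L^\infty$ norms of the Littlewood--Paley pieces of $\mu$, plus the fact that the frequency of $T(f,g)$ is dominated by those of $f$ and $g$.
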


This result can be viewed as a nonlinear analogue of Roth-type theorems in the fractal setting. In particular, it demonstrates that while linear configurations may be avoidable even at full dimension, a broad class of nonlinear patterns cannot be avoided once the Hausdorff dimension is sufficiently close to one.

 \begin{remark}
     As observed in \cite{BK,MR5038694}, one can show Theorem \ref{mainthm} under the assumption that the Hausdorff content of $E$ has a positive lower bound. We record such a result here:

     Let $\gamma \in \Theta(1)$. Then there exists $\sigma=\sigma_\gamma\in (0,1)$ and $\delta\in(0,1)$ such that for any compact set $E \subset [0,1]$ with $\Hau^{\sigma}_\infty(E)\geq (1-\delta)$, the set $E$
contains a nontrivial configuration of the form
\[
\{x,\ x - t,\ x - \gamma(t)\}
\]
for some $x \in \mathbb{R}$ and $t > 0$.
 \end{remark}

\subsubsection{Overview of the method.}
Our approach combines techniques from harmonic analysis and geometric measure theory. The key ingredients are:

\begin{itemize}
\item The \emph{smoothing inequality} in \cite{MR4776384} for a nonlinear averaging operator associated with the curve $\gamma$, which captures a form of analytic regularity for the configuration operator.

\item The construction of a probability measure supported on $E$ that satisfies both an \emph{energy condition} and a \emph{spectral gap condition} \cite{MR4966567,MR4609784}. This measure plays the role of a pseudorandom object, allowing us to control different frequency components.

\item A decomposition of the configuration integral into main and error terms via a high--low frequency analysis. The main term is shown to be positive, while the error terms are controlled using the smoothing inequality and the spectral gap property.
\end{itemize}

These ingredients together yield a robust mechanism for detecting nonlinear configurations in fractal sets of large Hausdorff dimension.

\medskip

\noindent
\textbf{Organization of the paper.}
In Section \ref{sec:2}, we review the necessary background from geometric measure theory, including Frostman measures and Hausdorff content in a metric space setting. In Section \ref{sec:3}, we establish the smoothing inequality associated with our family of curves and develop a general mechanism for detecting three-point configurations. In Section \ref{sec:4}, we construct measures satisfying both the energy and spectral gap conditions. Finally, in Section \ref{sec:5}, we combine these ingredients to prove Theorem \ref{mainthm}.

\section{Preliminaries}\label{sec:2}
\subsection{Frostman measure and the Riesz energy}

This subsection introduces the necessary background from geometric measure theory. 
Most of the results presented here can be found in \cite[Chapters 2 and 3]{MR3617376}.

We begin with the notion of a Frostman measure.

\begin{defn}[Frostman measure]~\\
Let $s>0$. A measure $\mu$ is called an $s$-Frostman measure if
\begin{equation} \label{definition of Frostman measure}
\mu(B(x,r)) \lesssim r^s,
\qquad 
\forall\, x \in \R^d,\ \forall\, r>0.
\end{equation}
\end{defn}

We do not assume that an $s$-Frostman measure $\mu$ is a probability measure. 
However, if $\mu$ has compact support, then it is finite. 
The following theorem, known as Frostman's lemma, shows that fractal sets can be represented by Frostman measures.

\begin{thm}[Frostman's lemma] \label{Frostman's lemma}~\\
Let $s \in (0,d]$. For a Borel set $E \subset \R^d$, one has $\Hau^{s}(E)>0$ if and only if there exists $\mu \in \mathbf{M}(E)$ such that $\mu$ is an $s$-Frostman measure, where $\Hau^{s}$ denotes the $s$-dimensional Hausdorff measure, and $\mathbf{M}(E)$ denotes the collection of finite Borel measures with compact support contained in $E$.
\end{thm}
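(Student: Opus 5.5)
The statement is Frostman's lemma, a classical result; the plan is to give the standard argument in both directions.

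\emph{Easy direction.} Suppose $\mu \in \mathbf{M}(E)$ is a nonzero $s$-Frostman measure, say $\mu(B(x,r)) \le C r^s$. Take any countable cover $E \subset \bigcup_i U_i$. Each $U_i$ with $U_i\cap E\neq\emptyset$ lies in a ball of radius $\operatorname{diam}(U_i)$ centred at a point of $E$. Hence
\[
0<\mu(E) \le \sum_i \mu(U_i) \le C \sum_i \operatorname{diam}(U_i)^s .
\]
Taking the infimum over covers gives $\Hau^s_\infty(E) \ge \mu(E)/C > 0$, and therefore $\Hau^s(E)>0$.

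\emph{Hard direction, compact case.} First assume $E$ is compact and $\Hau^s(E)>0$, so $\Hau^s_\infty(E)>0$. Place $E$ inside a dyadic cube $Q_0$ and use the dyadic content $\Hau^s_{\infty,\mathcal D}$, which is comparable to $\Hau^s_\infty$.

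For each $k$, build a measure $\mu_k$ by the usual top-down pruning.
\begin{itemize}
\item On every dyadic cube $Q$ of side $2^{-k}$ meeting $E$, put mass $\ell(Q)^s$, spread uniformly.
\item Going up one generation at a time, if a parent cube $Q$ carries more than $\ell(Q)^s$, rescale the restriction to $Q$ so its mass is exactly $\ell(Q)^s$.
\end{itemize}
After this pass, every dyadic cube $Q$ satisfies $\mu_k(Q) \le \ell(Q)^s$. Moreover, each point of $E$ lies in a maximal cube $Q$ with $\mu_k(Q)=\ell(Q)^s$. These maximal cubes are disjoint and cover $E$, so
\[
\|\mu_k\| = \sum \ell(Q)^s \ge \Hau^s_{\infty,\mathcal D}(E)\gtrsim \Hau^s_\infty(E).
\]
Also $\|\mu_k\| \le \ell(Q_0)^s$. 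Normalise and pass to a weak-$*$ limit $\mu$. The support of $\mu$ lies in $E$ because $E$ is closed and $\supp \mu_k$ lies in the $2^{-k}\sqrt d$-neighbourhood of $E$. The mass of $\mu$ stays bounded below.

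The dyadic bound passes to the limit on slightly enlarged cubes. A ball of radius $r$ meets at most $C_d$ dyadic cubes of side comparable to $r$, which gives $\mu(B(x,r)) \lesssim r^s$.

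\emph{Hard direction, Borel case.} For general Borel $E$, the main obstacle is reducing to the compact case. Here I would invoke the theorem of Davies and Howroyd: a Borel (indeed analytic) set with $\Hau^s(E)>0$ contains a compact $K$ with $0<\Hau^s(K)<\infty$. Applying the compact construction to $K$ yields the required $\mu\in\mathbf M(E)$.

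If needed, I would cite that inner-regularity result (Mattila, Chapter 8) rather than reprove it. It is the only genuinely delicate step; everything else is routine.
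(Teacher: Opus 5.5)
Your proposal is correct and follows the standard route of the source the paper relies on. The paper gives no proof of its own and only cites Mattila, whose argument is the same: dyadic top-down pruning plus a weak-$*$ limit for compact $E$, and reduction of the Borel case to the compact one via the Davies compact-subset theorem. One cosmetic fix: a compact set need not lie inside a single dyadic cube, so translate $E$ first (or run the construction on finitely many top-level cubes), which is harmless because both $\mathcal{H}^s$ and the Frostman condition are translation invariant.
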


Next, we introduce the notion of the Riesz energy.

\begin{defn}[Riesz energy]~\\
Let $s>0$. The Riesz energy of a Borel measure $\mu$, denoted by $I_s(\mu)$, is defined by
\[
I_s(\mu)
:=
\int_{\R^d} \int_{\R^d} |x-y|^{-s} \, d\mu(y)\, d\mu(x)
=
\int_{\R^d} (k_s * \mu)(x)\, d\mu(x),
\]
where $k_s(x)=|x|^{-s}$ for all nonzero $x \in \R^d$.
\end{defn}

The study of Frostman measures is often associated with the Riesz energy via the following proposition.

\begin{prop}
Let $\mu \in \mathbf{M}(\R^d)$ be an $s$-Frostman measure. Then
\[
I_t(\mu)\lesssim_{s,t,d} \mu(\R^d),
\qquad \text{for all } t\in[0,s).
\]
\end{prop}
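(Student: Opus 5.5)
The plan is to write the Riesz energy through the distribution of $\mu$ on dyadic scales and then apply the Frostman condition at each scale. Fix $t\in[0,s)$ and let $\mu$ be $s$-Frostman with compact support. The case $t=0$ is trivial, since then $I_0(\mu)=\mu(\R^d)^2$. Because $\mu$ is finite and compactly supported, this is at most a constant times $\mu(\R^d)$, where the constant depends on $\mu(\R^d)\lesssim (\operatorname{diam}\supp\mu)^s$. I read the implicit constant as depending on the Frostman constant, and I will treat the bound as linear in $\mu(\R^d)$ as stated.

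\textbf{Step 1 (inner potential bound).} For fixed $x$, compute $(k_t*\mu)(x)=\int |x-y|^{-t}\,d\mu(y)$. One option is the layer-cake formula
\[
\int |x-y|^{-t}\,d\mu(y)=\int_0^\infty \mu\bigl(\{y:|x-y|^{-t}>\lambda\}\bigr)\,d\lambda = \int_0^\infty \mu\bigl(B(x,\lambda^{-1/t})\bigr)\,d\lambda .
\]
The alternative is to split $\R^d$ into the annuli $A_j(x)=\{y: 2^{-j-1}<|x-y|\le 2^{-j}\}$ for $j\in\Z$.

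\textbf{Step 2 (summing the scales).} On $A_j(x)$ the kernel is at most $2^{(j+1)t}$, and $\mu(A_j(x))\le \mu(B(x,2^{-j}))\lesssim 2^{-js}$ by \eqref{definition of Frostman measure}. For $j\ge0$ (small scales) each term is therefore $\lesssim 2^{t}2^{-j(s-t)}$. This is a convergent geometric series exactly because $t<s$, and it is the only place the hypothesis $t<s$ is used. For $j<0$ (large scales) the kernel is at most $1$, so those terms contribute at most $\mu(\R^d)$. The result is $\sup_x (k_t*\mu)(x)\lesssim_{s,t,d} 1+\mu(\R^d)$.

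\textbf{Step 3 (integrating).} Integrate against $d\mu(x)$ to get $I_t(\mu)\le \mu(\R^d)\,\sup_x (k_t*\mu)(x)$. This gives the claimed bound, with the dependence on the total mass absorbed into the implicit constant as discussed above. Alternatively, if one wants the bound literally linear in $\mu(\R^d)$ with the Frostman constant fixed, one caps the large scales using $\mu(B(x,r))\le\mu(\R^d)$.

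The main obstacle is Step 2, specifically the summability at small scales, which forces $t<s$. I would also need to be careful that the point mass at $y=x$ does not matter. This follows because the Frostman condition rules out atoms when $s>0$.
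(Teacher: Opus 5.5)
Your Steps 1--3 are the standard argument. The paper gives no proof and defers to Mattila's book. There one writes $(k_t*\mu)(x)$ via the layer-cake formula and bounds $\mu(B(x,r))$ by $Cr^s$ at small scales (summable because $t<s$) and by $\mu(\R^d)$ at large scales. This gives $I_t(\mu)\le \mu(\R^d)\bigl(C_{s,t}\,C+\mu(\R^d)\bigr)$, where $C$ is the Frostman constant. You absorb the quadratic term through $\mu(\R^d)\lesssim C(\operatorname{diam}\supp\mu)^s$. That is correct, and it is all the paper needs, since it only applies the proposition to measures supported in $[0,1]$.

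Your closing ``alternative'' is false, though. With only the Frostman constant fixed, no bound linear in $\mu(\R^d)$ holds. Take $d=s=1$ and $\mu=\mathcal{L}^1|_{[0,R]}$. Then $\mu(B(x,r))\le 2r$ for every $R$ and $\mu(\R)=R$, but $I_t(\mu)=2R^{2-t}/\bigl((1-t)(2-t)\bigr)$, so $I_t(\mu)/\mu(\R)\to\infty$ as $R\to\infty$.

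Capping the large scales by $\mu(\R^d)$ is exactly what produced the $\mu(\R^d)^2$ term. Even the optimal cutoff $r_0=(\mu(\R^d)/C)^{1/s}$ only gives $I_t(\mu)\le \frac{s}{s-t}C^{t/s}\mu(\R^d)^{2-t/s}$, and the example shows this is sharp.

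To get a genuinely linear bound, use the support instead. For $x\in\supp\mu$, only the annuli with $2^{-j-1}<\rho:=\operatorname{diam}\supp\mu$ carry mass. Hence $(k_t*\mu)(x)\lesssim_{s,t}C\rho^{s-t}$ and $I_t(\mu)\lesssim_{s,t}C\rho^{s-t}\mu(\R^d)$. So the implicit constant in the proposition must depend on $C$ and $\rho$, not only on $s,t,d$; this is harmless for the paper, where $\rho\le 1$. You should drop the alternative rather than present it as a way to obtain the literal statement.
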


Finally, we relate the Riesz energy to Sobolev norms.

\begin{prop}\label{EnergyandSobolev}
Let $\mu \in \mathbf{M}(\R^d)$ and $s\in (0,d)$. Then
\[
I_s(\mu)=\gamma(d,s)
\int_{\R^d} |\widehat{\mu}(\xi)|^2 |\xi|^{s-d} \, d\xi
\simeq \gamma(d,s)\,\|\mu\|^2_{H^{(s-d)/2}(\R^d)},
\]
for some constant $\gamma(d,s)$, where
\begin{equation}\label{Sobolevdef}
\|f\|^2_{H^{\sigma}(\R^d)}
:=
\int_{\R^d} |\widehat{f}(\xi)|^2 (1+|\xi|^2)^{\sigma} \, d\xi.
\end{equation}
\end{prop}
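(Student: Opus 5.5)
The plan is to compute the Fourier transform of the Riesz kernel and then apply Plancherel, with an approximation step to make the distributional identity rigorous. After that we compare the homogeneous weight $|\xi|^{s-d}$ with the inhomogeneous weight $(1+|\xi|^2)^{(s-d)/2}$. The implicit constant in $\simeq$ will be allowed to depend on $d$, $s$ and the diameter of $\supp\mu$; this dependence is harmless for the compactly supported measures used later in the paper.

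\emph{Step 1: the kernel.} Since $0<s<d$, the kernel $k_s(x)=|x|^{-s}$ is locally integrable and tempered. Its distributional Fourier transform is $\widehat{k_s}=\gamma(d,s)|\xi|^{s-d}$ with $\gamma(d,s)=\pi^{s-d/2}\Gamma(\frac{d-s}{2})/\Gamma(\frac{s}{2})>0$. I would prove this by writing the Gaussian subordination formula
\[
|x|^{-s}=\frac{\pi^{s/2}}{\Gamma(s/2)}\int_0^\infty e^{-\pi t|x|^2}\,t^{s/2-1}\,dt .
\]
Taking the Fourier transform inside the integral (Gaussians are explicit), and justifying the exchange by Fubini after pairing with a Schwartz function, gives the formula.

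\emph{Step 2: the energy identity.} Let $\phi_\epsilon$ be a nonnegative Gaussian approximate identity and set $\mu_\epsilon=\mu*\phi_\epsilon$, which is Schwartz-like with $\widehat{\mu_\epsilon}=\widehat\mu\,\widehat{\phi}(\epsilon\,\cdot)$. By Step 1 and Parseval,
\[
\int (k_s*\mu_\epsilon)\,d\mu_\epsilon=\gamma(d,s)\int|\widehat\mu(\xi)|^2|\widehat\phi(\epsilon\xi)|^2|\xi|^{s-d}\,d\xi .
\]
We then let $\epsilon\to0$. On the Fourier side, $|\widehat\phi(\epsilon\xi)|^2=e^{-2\pi\epsilon^2|\xi|^2}$ increases to $1$, so monotone convergence applies. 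On the spatial side, I would use the Gaussian representation of $k_s$: one has $\phi_\epsilon*\phi_\epsilon*e^{-\pi t|\cdot|^2}$ equal to a Gaussian of slightly larger width. This shows $\int k_s*\mu_\epsilon\,d\mu_\epsilon$ is monotone in $\epsilon$ and converges to $I_s(\mu)$ by monotone convergence, including the case where both sides are $+\infty$. This limit interchange is the only delicate point of the proof, and the Gaussian choice of mollifier is what makes it painless.

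\emph{Step 3: comparison with the Sobolev norm.} Split the frequency integral at $|\xi|=1$. For $|\xi|\ge1$ the weights $|\xi|^{s-d}$ and $(1+|\xi|^2)^{(s-d)/2}$ are comparable with constants depending only on $s,d$. For $|\xi|<1$, use $|\widehat\mu|\le\mu(\R^d)$ and the integrability of $|\xi|^{s-d}$ near $0$ (since $s>0$). This gives $\int_{|\xi|<1}|\widehat\mu|^2|\xi|^{s-d}\lesssim\mu(\R^d)^2$, and the same bound holds for the Sobolev weight.

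It remains to control $\mu(\R^d)^2$ from above by both quantities, which is where compact support enters. Suppose $\supp\mu\subset B(x_0,R)$. Then
\[
\operatorname{Re}\bigl(e^{2\pi i x_0\cdot\xi}\widehat\mu(\xi)\bigr)\ge\mu(\R^d)\cos(2\pi R|\xi|)\ge\tfrac12\mu(\R^d)
\]
for $|\xi|\le 1/(6R)$. Consequently both $\int_{|\xi|<c_R}|\widehat\mu|^2|\xi|^{s-d}$ and $\int_{|\xi|<c_R}|\widehat\mu|^2(1+|\xi|^2)^{(s-d)/2}$ are $\gtrsim_{R,s,d}\mu(\R^d)^2$. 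Hence the low-frequency parts are each comparable to $\mu(\R^d)^2$, and therefore to each other. Combining the low- and high-frequency comparisons yields
\[
\int|\widehat\mu|^2|\xi|^{s-d}\,d\xi\simeq\|\mu\|^2_{H^{(s-d)/2}(\R^d)} .
\]
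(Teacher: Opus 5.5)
There is a genuine gap in Step 2, at exactly the point you call delicate. The rest is sound. Steps 1 and 3 are correct. The overall route (Fourier transform of $k_s$, Parseval for $\mu_\epsilon$, then a limit) is the standard textbook one; the paper gives no proof here and only refers to Mattila's book. Your remark that the comparison with $\|\mu\|^2_{H^{(s-d)/2}}$ must depend on the size of $\supp\mu$ is also right.

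The problem is that the spatial-side limit does not follow from monotone convergence when $s>d-2$. In particular it never does when $d=1$, which is the case this paper uses. Carry out your Gaussian computation, $\phi_\epsilon*\phi_\epsilon*e^{-\pi t|\cdot|^2}=(1+2\epsilon^2t)^{-d/2}e^{-\pi t|\cdot|^2/(1+2\epsilon^2t)}$, and substitute $u=t/(1+2\epsilon^2t)$. The smoothed kernel $K_\epsilon:=k_s*\phi_\epsilon*\phi_\epsilon$ is then
\[
K_\epsilon(x)=\frac{\pi^{s/2}}{\Gamma(s/2)}\int_0^{1/(2\epsilon^2)}u^{\frac s2-1}\,(1-2\epsilon^2u)^{\frac{d-s}{2}-1}\,e^{-\pi u|x|^2}\,du .
\]
The factor $(1-2\epsilon^2u)^{\frac{d-s}{2}-1}$ increases to $1$ as $\epsilon\downarrow0$ only if $s\le d-2$, i.e.\ when $k_s$ is superharmonic. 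For $s>d-2$ it is $\ge1$ and decreases. Concretely, $K_\epsilon(0)\simeq\epsilon^{-s}<\infty=k_s(0)$. In $d=1$, convexity of $|x|^{-s}$ on $(0,\infty)$ gives $K_\epsilon(x)=|x|^{-s}\bigl(1+c_s\epsilon^2|x|^{-2}+\dots\bigr)>k_s(x)$ for $|x|\gg\epsilon$. So the kernels are monotone in neither direction. The totals $\int k_s*\mu_\epsilon\,d\mu_\epsilon$ are monotone in $\epsilon$ (from the Fourier side), but that alone does not identify their limit as $I_s(\mu)$. The Gaussian choice of mollifier does not help here; any mollifier has the same issue.

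The repair is standard. Since $K_\epsilon\to k_s$ pointwise (with $K_\epsilon(0)\to\infty$), Fatou's lemma gives $\liminf_{\epsilon\to0}\int k_s*\mu_\epsilon\,d\mu_\epsilon\ge I_s(\mu)$. This settles the case $I_s(\mu)=\infty$. If $I_s(\mu)<\infty$, then $\mu\otimes\mu$ does not charge the diagonal. Use the scaling $K_\epsilon(x)=\epsilon^{-s}K_1(x/\epsilon)$ together with $K_1(u)\lesssim\min\{1,|u|^{-s}\}$. Here $K_1$ is bounded because $k_s\in L^1_{\mathrm{loc}}$, and it decays like $|u|^{-s}$ thanks to the Gaussian tails. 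This gives the uniform domination $K_\epsilon\le C_{d,s}\,k_s$, and dominated convergence then yields $\int k_s*\mu_\epsilon\,d\mu_\epsilon\to I_s(\mu)$. With this in place of the monotone-convergence claim, your argument is complete.
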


\subsection{Generalized Hausdorff measure}

In this subsection, we recall the general Hausdorff measure in a metric space; see \cite[Chapter 4]{MR1333890}.  

\subsection{General setup}
Let $(\mathbb{X},|\cdot|)$ be a metric space, let $\mathcal{F}$ be a family of subsets of $\mathbb{X}$, and let $h:[0,\infty)\to [0,\infty)$ be a non-decreasing function with $h(0)=0$. 
For any subset $E \subset \mathbb{X}$, let $d(E)$ denote its {\it diameter}. 
We make the following two assumptions.

\begin{enumerate}
\item For every $\delta \geq 0$, there exists a family $\{E_i\}_{i\in\mathbb{N}} \subset \mathcal{F}$ such that $\mathbb{X} \subset \bigcup_{i=1}^\infty E_i$ and $d(E_i)\leq \delta$ for all $i$.

\item For every $\delta>0$, there exists $E\in\mathcal{F}$ with $d(E)\leq \delta$ such that $h(d(E))\leq \delta$.
\end{enumerate}

For any $0<\delta\leq\infty$ and $E\subset\mathbb{X}$, we define
\[
\Lambda^h_{\delta}(E)
:=
\inf\left\{
\sum_{i=1}^\infty h(d(E_i)):\,
E\subset\bigcup_{i=1}^\infty E_i,\,
d(E_i)<\delta,\,
E_i\in\mathcal{F}
\right\}.
\]

Assumption (1) ensures that such coverings always exist, while assumption (2) guarantees that $\Lambda^h_{\delta}(\emptyset)=0$. 
It is straightforward to verify that $\Lambda^h_{\delta}$ is monotone and subadditive, hence defines a measure. 
Moreover, for all $0<\delta_1<\delta_2\leq\infty$,
\[
\Lambda^h_{\delta_2}(E)\leq\Lambda^h_{\delta_1}(E).
\]

Hence, we define
\begin{equation}\label{generalsetup}
\Lambda^h(E)
:=
\lim_{\delta\to0} \Lambda^h_{\delta}(E)
=
\sup_{\delta>0} \Lambda^h_{\delta}(E),
\qquad \forall E\subset\mathbb{X}.
\end{equation}

As shown in \cite{MR1333890}, the measure $\Lambda^h$ is a Borel measure and is Borel regular if $\mathcal{F}\subset \mathcal{B}_{\mathbb{X}}$.

\subsubsection{General $s$-Hausdorff measure}

Let $\mathbb{X}$ be separable and let $0\leq s<\infty$. 
Taking $h(x)=x^s$, the resulting measure $\Lambda^h$ defined in \eqref{generalsetup} is called the (general) $s$-dimensional Hausdorff measure, denoted by $\Hau^s_{\mathbb{X}}$. Thus,
\[
\Hau^s_{\mathbb{X}}(E)
=
\lim_{\delta\to0} \Hau^s_{\mathbb{X},\delta}(E)
=
\sup_{\delta>0} \Hau^s_{\mathbb{X},\delta}(E),
\qquad \forall E\subset\mathbb{X},
\]
where
\[
\Hau^s_{\mathbb{X},\delta}(E)
=
\inf\left\{
\sum_{i=1}^\infty d(E_i)^s:\,
E\subset\bigcup_{i=1}^\infty E_i,\,
d(E_i)<\delta,\,
E_i\in\mathcal{F}
\right\}.
\]

\begin{prop}\label{basicprop}
For $0\leq s<t<\infty$ and $E\subset\mathbb{X}$:

\begin{enumerate}
\item $\Hau^s_{\mathbb{X}}(E)<\infty$ implies $\Hau^t_{\mathbb{X}}(E)=0$.

\item $\Hau^t_{\mathbb{X}}(E)>0$ implies $\Hau^s_{\mathbb{X}}(E)=\infty$.
\end{enumerate}
\end{prop}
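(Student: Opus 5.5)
The statement to be proved is Proposition \ref{basicprop}: the standard dichotomy for the general $s$-Hausdorff measure $\Hau^s_{\mathbb{X}}$. I will prove (1) directly. Then (2) follows as its contrapositive with the roles made explicit: if $\Hau^s_{\mathbb{X}}(E)<\infty$, then (1) gives $\Hau^t_{\mathbb{X}}(E)=0$. So $\Hau^t_{\mathbb{X}}(E)>0$ forces $\Hau^s_{\mathbb{X}}(E)=\infty$. All the work is therefore in (1).

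For (1), assume $\Hau^s_{\mathbb{X}}(E)=A<\infty$ and fix $\delta\in(0,1)$.

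1. By \eqref{generalsetup}, $\Hau^s_{\mathbb{X},\delta}(E)\le A$. Hence there is a cover $E\subset\bigcup_i E_i$ with $E_i\in\mathcal{F}$, $d(E_i)<\delta$, and
\[
\sum_i d(E_i)^s\le A+1 .
\]
2. Since $t>s$ and $d(E_i)<\delta$, we have $d(E_i)^t=d(E_i)^{t-s}d(E_i)^s\le \delta^{t-s}d(E_i)^s$. This uses $d(E_i)\ge 0$, and the convention $0^s=0$ when $s=0$ only affects sets of zero diameter, for which both sides vanish.
3. Because the same cover is admissible in the definition of $\Hau^t_{\mathbb{X},\delta}$, it follows that
\[
\Hau^t_{\mathbb{X},\delta}(E)\le \sum_i d(E_i)^t\le \delta^{t-s}(A+1).
\]
4. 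Letting $\delta\to0$ and using $t-s>0$ gives $\Hau^t_{\mathbb{X}}(E)=\lim_{\delta\to0}\Hau^t_{\mathbb{X},\delta}(E)=0$.

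The only point requiring care, and the closest thing to an obstacle, is the case $s=0$. There $h(x)=x^0$ should be read with $h(0)=0$, as required in the general setup. Sets of diameter zero then contribute $0$ to both sums, and the inequality in step 2 still holds termwise. Assumption (1) of the general setup guarantees that admissible covers exist, so the infimum in step 1 is over a nonempty family. No other results from the paper are needed.
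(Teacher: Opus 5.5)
Your argument is correct. It is the standard proof: use the termwise bound $\Hau^t_{\mathbb{X},\delta}(E)\le \delta^{t-s}\Hau^s_{\mathbb{X},\delta}(E)$, let $\delta\to0$, and get (2) as the contrapositive of (1). The paper gives no proof of its own and defers to the cited reference (Mattila, Chapter 4), where this same argument appears; your care about $s=0$ is unnecessary, since $t>0$ makes $d(E_i)^t=0$ whenever $d(E_i)=0$ under either convention for $0^0$.
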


By Proposition \ref{basicprop}, we define the Hausdorff dimension of a set $E\subset\mathbb{X}$ by
\begin{align*}
\HD^{\mathbb{X}}(E)&
=
\sup\{s:\Hau^s_{\mathbb{X}}(E)>0\}
=
\sup\{s:\Hau^s_{\mathbb{X}}(E)=\infty\}\\
&=
\inf\{t:\Hau^t_{\mathbb{X}}(E)<\infty\}
=
\inf\{t:\Hau^t_{\mathbb{X}}(E)=0\}.
\end{align*}

\subsubsection{$s$-Hausdorff measure adapted to a specific dyadic structure}

Motivated by \cite{MR4966567}, we introduce a specific dyadic structure and the corresponding $s$-Hausdorff measure adapted to this structure.

Let $N\geq1$ be a fixed positive integer. Define
\[
D^*=D^*[N]=\bigcup_{j\in\mathbb{Z}}D^N_j,
\qquad
D^N_j:=\left\{x+[0,2^{-jN}) : x\in2^{-jN}\mathbb{Z}\right\}.
\]
and
\[
D^*_J=D^*_J[N]:=\bigcup_{j\geq J}D^N_j,
\qquad \forall J\in\mathbb{Z}.
\]
For each $Q\in D^N_j$, we define the \textbf{''length''} of $Q$ by \begin{align}\label{length}
    \ell_{D^*}(Q):=2^{-j}.
\end{align}

We denote by $\Hau^s_{D^*,\delta}$ and $\Hau^s_{D^*}=\lim_{\delta\to0}\Hau^s_{D^*,\delta}$ the corresponding Hausdorff measures adapted to $D^*$, while $\Hau^s_\delta$ and $\Hau^s=\lim_{\delta\to0}\Hau^s_\delta$ denote the standard Hausdorff measures.

We also denote by $\HD^{D^*}$ the Hausdorff dimension adapted to $D^*$, and by $\HD$ the standard Hausdorff dimension.

\subsection{Interaction between $\Hau^s$ and $\Hau^s_{D^*,\,\infty}$}

The aim of this section is to study the relationship between the standard Hausdorff measure $\Hau^s$ and the Hausdorff content adapted to $D^*$, $\Hau^s_{D^*,\,\infty}$. 
The higher-dimensional case ($d\geq2$) has been established in \cite{MR4966567}, and their argument adapts directly to the one-dimensional setting.

\begin{lem}\label{standardVSgeneral}
If $E\subset\R$ is compact and $0\leq s/N<\HD(E)$, then $\Hau^s_{D^*,\,\infty}(E)>0$.
\end{lem}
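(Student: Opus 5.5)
Fix $s$ with $s/N<\HD(E)$ and pick $t$ with $s/N<t<\HD(E)$, so that $\Hau^{t}(E)>0$. By Frostman's lemma (Theorem \ref{Frostman's lemma}) there is a measure $\mu\in\mathbf{M}(E)$, $\mu\neq0$, with $\mu(B(x,r))\le C r^{t}$ for all $x\in\R$ and $r>0$. The plan is a mass distribution argument: show that every $Q\in D^*$ satisfies $\mu(Q)\lesssim \ell_{D^*}(Q)^{s}$. Then, for any countable cover $E\subset\bigcup_i Q_i$ with $Q_i\in D^*$,
\[
0<\mu(E)\le\sum_i\mu(Q_i)\lesssim\sum_i \ell_{D^*}(Q_i)^{s},
\]
and taking the infimum over covers gives $\Hau^s_{D^*,\infty}(E)\gtrsim\mu(E)>0$.

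\textbf{Mass bound for a single cube.} Let $Q\in D^N_j$. Then $Q$ is an interval of Euclidean length $2^{-jN}$, so it lies in a ball of radius $2^{-jN}$ and
\[
\mu(Q)\le C\,2^{-jNt}=C\,\ell_{D^*}(Q)^{Nt}.
\]
For $j\ge0$ we have $\ell_{D^*}(Q)=2^{-j}\le1$, and since $Nt>s$ this gives $\mu(Q)\le C\,\ell_{D^*}(Q)^{s}$. For $j<0$ we have $\ell_{D^*}(Q)>1$, so $\mu(Q)\le\mu(\R)\le\mu(\R)\,\ell_{D^*}(Q)^{s}$, since $s\ge0$. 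Hence $\mu(Q)\le \max(C,\mu(\R))\,\ell_{D^*}(Q)^s$ for every $Q\in D^*$.

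\textbf{Remaining checks.} The content $\Hau^s_{D^*,\infty}$ is defined via the diameter $d(\cdot)$ in the general setup, while the bound above is in terms of $\ell_{D^*}$. The interpretation consistent with \eqref{length} is that the metric on $D^*$ assigns each cube the size $\ell_{D^*}(Q)$, i.e.\ $h(d(Q))=\ell_{D^*}(Q)^s$; this is the convention of \cite{MR4966567}, and I would state it explicitly. The degenerate case $s=0$ is immediate, since a nonempty $E$ needs at least one covering cube, each contributing $\ell^0=1$. I expect no serious obstacle. The only real content is that a dyadic cube of generation $j$ has Euclidean size $2^{-jN}$ while its $D^*$-length is $2^{-j}$, which is exactly why the threshold $s/N<\HD(E)$ appears.
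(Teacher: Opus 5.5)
Your argument is correct, including the way you handle generations $j<0$ via the total mass. The convention you flag is in fact forced. With Euclidean diameters the lemma fails for $N\ge2$: for $E=[0,1]$, $N=2$, $s=3/2$, the $2^{2j}+1$ cubes of $D^2_j$ meeting $[0,1]$ give $\sum|Q|^{3/2}\lesssim 2^{-j}$. The convention is realized literally by the snowflake metric $|x-y|^{1/N}$, in which each $Q\in D^N_j$ has diameter exactly $2^{-j}$.

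The paper gives no proof of its own and defers to the higher-dimensional argument of Bruce--Pramanik. There is a shorter route than yours. Since $Q\in D^N_j$ has Euclidean length $|Q|=2^{-jN}=\ell_{D^*}(Q)^N$, every cover of $E$ by cubes $Q_i\in D^*$ is a Euclidean cover with $\sum_i\ell_{D^*}(Q_i)^s=\sum_i|Q_i|^{s/N}$. Hence
\[
\Hau^s_{D^*,\infty}(E)\ \ge\ \Hau^{s/N}_\infty(E)\ >\ 0 .
\]
Positivity holds because $s/N<\HD(E)$ gives $\Hau^{s/N}(E)=\infty$, and Hausdorff measure and Hausdorff content vanish simultaneously. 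This direct comparison avoids Frostman's lemma, the auxiliary exponent $t$, and the compactness of $E$, and it gives an explicit inequality between the two contents.

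Your mass-distribution route costs a bit more machinery. In exchange it produces a measure on $E$ with the $D^*$-Frostman bound $\mu(Q)\lesssim\ell_{D^*}(Q)^s$, the kind of object the paper later obtains from Lemma \ref{Frostman2}. That is more structure than this particular lemma needs.
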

We introduce a restricted version of the (general) $s$-Hausdorff content $\Hau^s_{D^*_J,\,\infty}$, defined by
\[
\Hau^s_{D^*_J,\,\infty}(E)
:=
\inf\left\{
\sum_{i=1}^\infty \ell_{D^*}(Q_i)^s:\,
E\subset\bigcup_{i=1}^\infty Q_i,\,
Q_i\in D^*_J
\right\}.
\]

From the definition, it is immediate that
\begin{align}\label{trivial}
\Hau^s_{D^*,\,\infty}(E)\leq \Hau^s_{D^*_J,\,\infty}(E),\quad\forall\,J.
\end{align}

\begin{lem}\label{equivalent}
If $E\subset\R$ is contained in some element of $D^*_J$ for some $J$, then
\[
\Hau^s_{D^*,\,\infty}(E)=\Hau^s_{D^*_J,\,\infty}(E),
\qquad \forall s\geq0.
\]
\end{lem}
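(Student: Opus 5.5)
One inequality is free: by \eqref{trivial}, $\Hau^s_{D^*,\infty}(E)\le \Hau^s_{D^*_J,\infty}(E)$. So it remains to prove $\Hau^s_{D^*_J,\infty}(E)\le \Hau^s_{D^*,\infty}(E)$. For that, I take an arbitrary cover of $E$ by cubes in $D^*$ and turn it into a cover by cubes in $D^*_J$ whose $\ell_{D^*}^s$-sum is no larger. Taking the infimum then finishes the proof.

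The structure I use is the nestedness of the grids. Fix $Q_0\in D^N_{j_0}$ with $j_0\ge J$ and $E\subset Q_0$. Any two half-open dyadic intervals in $D^*$ are either nested or disjoint, because $D^N_j$ refines $D^N_{j'}$ whenever $j\ge j'$.

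Now let $\{Q_i\}\subset D^*$ be a cover of $E$, and remove every $Q_i$ with $Q_i\cap E=\emptyset$; this only lowers the sum.
\begin{itemize}
\item If $Q_i\in D^N_{j_i}$ with $j_i\ge j_0$, then $Q_i\in D^*_J$ already and I keep it.
\item If $j_i<j_0$, then $Q_i$ meets $E\subset Q_0$, so $Q_i\cap Q_0\neq\emptyset$. By nestedness, $Q_0\subset Q_i$.
\end{itemize}
In the second case I replace $Q_i$ by $Q_0$. Its length satisfies $\ell_{D^*}(Q_0)^s=2^{-j_0 s}\le 2^{-j_i s}=\ell_{D^*}(Q_i)^s$ since $s\ge0$. (When $s=0$ both sides equal $1$ and the inequality still holds.)

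The modified family still covers $E$. Indeed, each replaced cube only lost points outside $Q_0$, and $E\subset Q_0$. Every cube in the new family lies in $D^*_{j_0}\subset D^*_J$, and its sum is at most the original sum. Infimizing over all covers gives $\Hau^s_{D^*_J,\infty}(E)\le\Hau^s_{D^*,\infty}(E)$.

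There is no real obstacle. The only points to check are the nestedness property of the grids $2^{-jN}\mathbb{Z}$, which holds because $2^{-jN}$ divides $2^{-j'N}$ for $j\ge j'$, and the monotonicity of $t\mapsto t^s$ for $s\ge0$. Replacing several large cubes by repeated copies of $Q_0$ only increases the count, never the sum beyond the original, so it is harmless.
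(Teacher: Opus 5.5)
Your proof is correct and complete. Dropping the cubes that miss $E$, keeping those of generation at least $j_0$, and replacing each coarser cube by $Q_0$ gives a $D^*_J$-cover whose $\ell_{D^*}^s$-sum is no larger; by nestedness, each such coarser cube must contain $Q_0$. Together with \eqref{trivial}, this yields the equality. The paper itself gives no proof of this lemma and only remarks that the higher-dimensional argument of Bruce and Pramanik carries over, so your argument supplies the details it omits.
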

Next, we record a Frostman lemma adapted to $\Hau^s_{D^*,\,\infty}$. 
The proof follows the same general approach as in \cite{MR3617376}. 
For any measure $\nu$, we denote by $\|\nu\|$ its total variation norm.

\begin{lem}\label{Frostman2}
If $E\subset\R$ is compact and $s\geq0$, then there exists a Borel measure $\nu$ supported on $E$ such that $\|\nu\|\geq \Hau^s_{D^*,\,\infty}(E)$ and for all $Q\in  D^*$
\[
\nu(Q)\leq \ell_{D^*}(Q)^s.
\]
\end{lem}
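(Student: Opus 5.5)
We follow the standard proof of Frostman's lemma via a dyadic construction from the bottom up (cf.\ \cite[Chapter 3]{MR3617376}), adapted to the tree $D^*$. Note that $D^*$ is a nested family: every $Q\in D^N_{j+1}$ lies in a unique parent in $D^N_j$, each parent has exactly $2^N$ children, and the length is $\ell_{D^*}(Q)=2^{-j}$ for $Q\in D^N_j$. Since $E$ is compact, it is contained in finitely many cubes of some coarse generation. We may also assume $\Hau^s_{D^*,\infty}(E)>0$, since otherwise $\nu=0$ works.

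Fix a large $m$ and build a measure $\nu_m$ at generation $m$. For each $Q\in D^N_m$ with $Q\cap E\neq\emptyset$, let $\nu_m|_Q$ be a multiple of Lebesgue measure on $Q$ with total mass $\ell_{D^*}(Q)^s=2^{-ms}$. Then go up the tree one generation at a time. For $Q\in D^N_{j}$, $j<m$: if $\nu_m^{(j+1)}(Q)>\ell_{D^*}(Q)^s$, multiply $\nu_m^{(j+1)}|_Q$ by $\ell_{D^*}(Q)^s/\nu_m^{(j+1)}(Q)$; otherwise leave it unchanged. Stop at a generation $j_0$ coarse enough that $E$ meets only boundedly many cubes; these can be handled at once, or $j_0$ can be taken very negative. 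The rescalings only decrease mass. Hence the final measure $\nu_m$ satisfies $\nu_m(Q)\le \ell_{D^*}(Q)^s$ for every $Q\in D^N_j$ with $j_0\le j\le m$. For $j>m$ the bound follows because $\nu_m$ is a multiple of Lebesgue measure on each generation-$m$ cube, with density at most $2^{-ms}/2^{-mN}$. A cube $Q\in D^N_j$ with $j>m$ therefore gets mass at most $2^{-ms}2^{-(j-m)N}\le 2^{-js}$, provided $s\le N$. For $s>N$ we instead get $\Hau^s_{D^*,\infty}(E)=0$, since then $\sum_Q \ell(Q)^s$ over a generation-$j$ cover is $\lesssim 2^{jN}2^{-js}\to0$. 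For $j<j_0$ the bound holds after choosing $j_0$ suitably, or it is trivial because masses are bounded.

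\textbf{Key mass lower bound.} For each $x\in E$, take the cube $Q\ni x$ of the generation-$m$ level. Follow its ancestors up to the maximal one, $Q_x$, on which the constraint is saturated, i.e.\ $\nu_m(Q_x)=\ell_{D^*}(Q_x)^s$. Such a $Q_x$ exists: when no rescaling occurred, the generation-$m$ cube itself has $\nu_m(Q)=\ell^s$. The reason is that once a cube is saturated, later rescalings of its ancestors only reduce it; so we take the maximal saturated ancestor, and above it nothing was rescaled. The maximal cubes $Q_x$ are disjoint (nested structure), cover $E$, and lie in $D^*$. Hence
\[
\|\nu_m\|=\sum_x \nu_m(Q_x)=\sum \ell_{D^*}(Q_x)^s\ge \Hau^s_{D^*,\infty}(E).
\]
This uses the fact that the infimum defining $\Hau^s_{D^*,\infty}$ ranges over all countable covers by elements of $D^*$.

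Finally, let $m\to\infty$. The total masses $\|\nu_m\|$ are uniformly bounded by $\sum \ell(Q)^s$ over the finitely many top cubes, and the supports lie in the $2^{-mN}$-neighborhood of $E$. Take a weak-$*$ limit $\nu$. Then $\nu$ is supported on $E$ by compactness, and $\|\nu\|\ge\Hau^s_{D^*,\infty}(E)$ because total mass passes to the limit on a compact neighborhood.

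The main obstacle is that the cubes $Q$ are half-open, so the inequality $\nu(Q)\le\ell_{D^*}(Q)^s$ does not immediately pass to weak limits; open sets are fine, but $Q$ is not open. Our first attempt is to apply the limit bound on the open interior. For the boundary, observe that $\nu$ gives zero mass to points when $s>0$: for each $j$, a point lies in one cube of $D^N_j$, so its mass is $\le 2^{-js}\to0$. Then $\nu(Q)=\nu(\operatorname{int}Q)\le\liminf\nu_m(\operatorname{int}Q)\le\ell_{D^*}(Q)^s$. The case $s=0$ is trivial: take a point mass of mass $1$ at a point of $E$.
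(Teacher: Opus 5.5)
Your argument follows the same route as the paper. The paper gives no proof of its own; it only defers to the standard bottom-up dyadic Frostman construction, which is what you carry out on the tree $D^*$. The discrete part of your proof is correct:
\begin{itemize}
\item capping the masses generation by generation;
\item using the maximal saturated cubes to get $\|\nu_m\|\ge\Hau^s_{D^*,\infty}(E)$;
\item the density bound for generations finer than $m$ when $s\le N$;
\item the trivial case $s>N$.
\end{itemize}

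The step that fails as written is the one you flag as the main obstacle. To rule out atoms of $\nu$, you say a point lies in one cube $Q'\in D^N_j$, ``so its mass is $\le 2^{-js}$''. That uses $\nu(Q')\le\ell_{D^*}(Q')^s$ for a half-open cube, which is exactly the statement you are trying to prove, so the argument is circular. The dangerous points are the grid points $a$. Such a point is the left endpoint of the cube containing it at every fine generation, so passing to interiors does not help either.

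The repair is short. The open interval $U=(a-2^{-jN},a+2^{-jN})$ meets at most three cubes of $D^N_j$, and each of these has $\nu_m$-mass at most $2^{-js}$ for every $m$. Lower semicontinuity on open sets then gives $\nu(\{a\})\le\nu(U)\le\liminf_m\nu_m(U)\le 3\cdot 2^{-js}$, which tends to $0$ as $j\to\infty$ when $s>0$. After that, $\nu(Q)=\nu(\operatorname{int}Q)\le\ell_{D^*}(Q)^s$ follows as you say.

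There is also a smaller slip at $s=0$: a single unit atom suffices only if $E$ lies on one side of $0$. Since $0$ is a grid point of every $D^N_j$, no cube of $D^*$ meets both $(-\infty,0)$ and $[0,\infty)$. So $\Hau^0_{D^*,\infty}(E)=2$ when $E$ meets both half-lines, and you need one unit atom on each side. This does not matter for the paper's application, where $E\subset[0,1]$ and $s>0$.
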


\begin{lem}\label{controloferergy}
There exists a decreasing function $\mathbf{f}:(0,\infty)\to[1,\infty)$ such that the following holds: 
For each constant $L\geq0$ and exponents $t,s$ with $0<t<1$ and $s>t+N-1$, one has
\[
\sup\left\{
I_t(\mu):
\supp(\mu)\subset[0,1],\,
\|\mu\|\leq1,\,
\sup_{Q\in D^*}\frac{\mu(Q)}{\ell_{D^*}(Q)^s}\leq L
\right\}
\leq L\cdot \mathbf{f}(s-t-N+1).
\]
\end{lem}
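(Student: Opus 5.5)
We must bound $I_t(\mu)=\iint|x-y|^{-t}\,d\mu\,d\mu$ for $\mu$ supported in $[0,1]$ with $\|\mu\|\le1$ and $\mu(Q)\le L\,\ell_{D^*}(Q)^s$ for all $Q\in D^*$. The plan is to convert the dyadic-type condition on the coarse grid $D^N_j$ (cubes of side $2^{-jN}$ but ``length'' $2^{-j}$) into a standard Frostman-type ball estimate with a reduced exponent, and then run the usual layer-cake argument for Riesz energies.

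\textbf{Step 1: intermediate scales.} Fix a standard dyadic interval $I$ of side $2^{-m}$. Write $m=jN+r$ with $0\le r<N$. Then $I$ is contained in some $Q\in D^N_j$ with $\ell_{D^*}(Q)=2^{-j}$, so
\[
\mu(I)\le \mu(Q)\le L\,2^{-js}=L\,2^{-(m-r)s/N}.
\]
This bound is too weak on its own: it only gives exponent $s/N$ per unit of $m$. To use $s>t+N-1$ we need a better estimate. The key observation is that $Q$ splits into $2^{r}$ dyadic intervals of side $2^{-m}$. We also have the trivial bound $\mu(Q)\le L\,2^{-js}$ in aggregate, and at the next level $Q'\in D^N_{j+1}$ of side $2^{-(j+1)N}$ we have $\mu(Q')\le L\,2^{-(j+1)s}$. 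Combining these, $I$ is a union of $2^{(j+1)N-m}=2^{N-r}$ cubes of $D^N_{j+1}$, so
\[
\mu(I)\le \min\bigl(L2^{-js},\,2^{N-r}L2^{-(j+1)s}\bigr).
\]
With $s>N-1+t$, a direct computation gives $\mu(I)\lesssim_N L\,2^{-m(s-N+1)}\cdot 2^{O(N)}$. Checking the endpoints $r=0$ and $r=N-1$: for $r=0$ we get $L2^{-js}=L2^{-ms/N}$. This is still only exponent $s/N$, and $s/N\ge s-N+1$ fails in general, since for $s$ near $N$ we have $s/N\approx1\approx s-N+1$.

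So the correct effective exponent should be checked more carefully. Writing $s=N-1+\alpha$ with $\alpha>t$, and $2^{-m}=2^{-jN}$, the bound $2^{-js}=2^{-m(N-1+\alpha)/N}$ is still larger than $2^{-m\alpha}$ because $(N-1+\alpha)/N\ge\alpha$ when $\alpha\le1$. Hence for every dyadic $I$ of side $2^{-m}$,
\[
\mu(I)\le 2^{N}L\,2^{-m\alpha'},\qquad \alpha'=\min\Bigl(\alpha,\tfrac{s}{N}\Bigr)\ge\min(\alpha,1)>t
\]
(using $s/N\ge\alpha$ for $\alpha\le1$). Since $t<1$, we may replace $\alpha$ by $\min(\alpha,1)$, which is still $>t$. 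The intermediate $r$ cases are handled by the same bound with the constant $2^{sr/N}\le 2^{N}$, so this is a genuine $\alpha'$-Frostman bound with constant $\lesssim_N L$.

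\textbf{Step 2: from dyadic intervals to balls and the energy sum.} Any ball of radius $2^{-m}$ is covered by at most 3 dyadic intervals of side $2^{-m}$. Then
\[
I_t(\mu)\le \sum_{m\ge0}2^{(m+1)t}\int \mu\bigl(B(x,2^{-m})\bigr)\,d\mu(x)\lesssim L\sum_m 2^{-m(\alpha'-t)}\|\mu\|\lesssim \frac{L}{1-2^{-(\alpha'-t)}}.
\]
The $m=0$ term and the compact support in $[0,1]$ handle the large scales. The resulting factor depends only on $\alpha'-t\ge\min(s-N+1,1)-t$.

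\textbf{Step 3: defining $\mathbf f$.} Set $\mathbf f(u)=C_N\bigl(1-2^{-\min(u,1-t)}\bigr)^{-1}$. The $t$-dependence must be removed, which needs care: when $s-N+1>1$ we have $\alpha'=1$ and the gap $1-t$ may be small. The main obstacle is therefore this bookkeeping, namely ensuring the constant depends only on $s-t-N+1$. When $\alpha\ge1$, Step 1 in fact yields the exponent $s/N$ directly. Using $s/N-t\ge (s-N+1-t)/N$, since $s\le N$ is not assumed we instead bound $s/N-t\ge \tfrac1N(s-t-N+1)$ for $t<1$. Hence $\alpha'-t\ge (s-t-N+1)/N$ in all cases, and $\mathbf f(u)=C_N(1-2^{-u/N})^{-1}$ is decreasing and $\ge1$.
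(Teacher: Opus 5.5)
Your overall route is right, and it is the standard one. The paper gives no proof of this lemma; it defers to Bruce--Pramanik. The natural argument there is exactly yours: turn the $D^*$-Frostman hypothesis into an ordinary Frostman bound on dyadic intervals with exponent larger than $t$, then sum over dyadic scales as in your Step~2.

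Two things in Step~1 need repair. First, the remark that $s/N\ge s-N+1$ ``fails in general'' is false. Since $s/N-(s-N+1)=(N-1)(N-s)/N$, the inequality holds precisely when $s\le N$. Also, ``still larger than $2^{-m\alpha}$'' should read ``smaller''. Second, and more substantively, you never treat $s>N$, and there the estimate $2^{rs/N}\le 2^N$ fails. The constant is $2^{rs/N}$, which can be as large as $2^{(N-1)s/N}$. So your Step~3 handling of $\alpha\ge1$ does not give a bound of the form $L\,\mathbf{f}(s-t-N+1)$ with $\mathbf{f}$ decreasing. The missing observation is one line: if $s>N$, then $\mu([0,1])\le (2^{jN}+1)L2^{-js}\to0$ as $j\to\infty$. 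Hence $\mu=0$ and there is nothing to prove.

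Once $s\le N$, you have $\alpha=s-N+1\le 1$, $\alpha'=\alpha$, and $\alpha'-t=s-t-N+1$ exactly. So the $t$-bookkeeping in Step~3 is unnecessary.

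The two-level bound you wrote down and then abandoned is also sharper than you realized. Take $m=jN+r$.
For $r=0$, the coarse bound suffices, because $js-jN(s-N+1)=j(N-1)(N-s)\ge0$.
For $r\ge1$, the fine bound $2^{N-r}L2^{-(j+1)s}$ suffices, because $(j+1)s-(N-r)-(jN+r)(s-N+1)=(N-s)(r-1+j(N-1))\ge0$.

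Thus $\mu(I)\le L|I|^{s-N+1}$ for every dyadic $I$, with constant $1$. Step~2 then yields $\mathbf{f}(u)=C(1-2^{-u})^{-1}$ with $C$ absolute and independent of $N$, which is what makes $s-t-N+1$ the natural parameter. Since $N$ is fixed throughout the paper, your $N$-dependent choice $C_N(1-2^{-u/N})^{-1}$ is still acceptable once the case $s>N$ is dispatched.
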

\section{Smoothing inequality for the curve family}\label{sec:3}
This section contributes to the desired smoothing inequality. The major ingredient has been proved in the paper \cite{MR4776384} of Christ and Zhou. For more references along this line, see, for example, \cite{MR4295087,2022MC}.
 In the present work, we establish a Sobolev estimate for the multilinear averaging operator defined along the curve $(t,\gamma(t))$, utilizing the trilinear smoothing inequality recently developed by Christ and Zhou \cite{MR4776384} as the primary tool.

We first recall the curve family in \cite{MR4776384}. Let $I_0\subset\mathbb{R}$ be an open interval and $\vec{\gamma}: I_0\to\mathbb{R}^2$ be real-analytic with $\frac{d\vec{\gamma}}{dt}$ vanishing nowhere. Write $\vec{\gamma}(t)=(\gamma_1(t),\,\gamma_2(t))$  and define
$J(t) = \gamma'_1(t)-\gamma_2'(t).$ The smoothing inequality requires the curve $\vec{\gamma}$ satisfy the following three hypothesis:
\newline {\bf Hypothesis 1.}\ 
$\gamma'_1,\gamma'_2$ are linearly independent over $\mathbb{R}$. 
\newline {\bf Hypothesis 2.}\ 
There do not exist $a,b_1,b_2\in\mathbb{C}$ 
with $a\ne 0$ and at least one $b_j\ne 0$ such that $\sum_{j=1}^2 b_je^{a\gamma_j(t)}$
is constant in $I_0$.
\newline {\bf Hypothesis 3.}\ 
\begin{align*} \label{hypothesis1}
|J(t)| + |J'(t)| \neq 0 \ \  \forall\,t\in I_0.
\end{align*}
The first hypothesis is equivalent to the assumption that the range of $\vec{\gamma}$ is not contained
in any affine subspace of $\mathbb{R}^2$. We remark that \begin{align*}
    \left\{(t,\gamma(t)):\,\gamma\in\Theta\right\}
\end{align*} meets the above conditions and the three hypotheses.

Let $\ell\in\mathbb{N}$ and $\chi\in C^\infty(\mathbb{R})$ be a function supported on $[1/2,4]$ so that $\chi(t)=1$ for all $t\in[1,2]$. Define $\chi_{(\ell)}(t):=\chi(2^\ell t)$.

Treating the smoothing inequality in \cite[Theorem 2.4]{MR4776384} as the black box, we apply the bilinear localization technique as well as the complex interpolation to obtain the desired smoothing inequality.

\begin{thm}[Smoothing inequality]\label{smoothing}~\\
 Let $\gamma\in\Theta$. Then  there exists $\sigma_0>0$ and a constant $\kappa>0$  such that for any $\ell\in\N$
   \begin{align*}
   \left\|\int_{\mathbb{R}} 
f_1(x-t)f_2(x-\gamma(t))\chi_{(\ell)}(t)\,dt\right\|_{H^{\sigma_0}} \lesssim 2^{\kappa\ell}\cdot\prod^2_{j=1}\|f_j\|_{H^{-\sigma_0}},\end{align*}
for all $f_1,f_2\in\mathcal{S}(\mathbb{R})$.
\end{thm}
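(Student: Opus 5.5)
We reduce the Sobolev estimate to the Christ--Zhou trilinear smoothing inequality \cite[Theorem 2.4]{MR4776384} by duality. Put
\[
T_\ell(f_1,f_2)(x):=\int f_1(x-t)f_2(x-\gamma(t))\chi_{(\ell)}(t)\,dt .
\]
Bounding $\|T_\ell(f_1,f_2)\|_{H^{\sigma_0}}$ is the same as bounding the trilinear form $\Lambda_\ell(f_1,f_2,f_3)=\int T_\ell(f_1,f_2)f_3\,dx$ by $\|f_3\|_{H^{-\sigma_0}}$. The black box gives a bound of the type
\[
|\Lambda(f_1,f_2,f_3)|\lesssim \prod_{j}\|f_j\|_{H^{-\sigma}}
\]
for some small $\sigma>0$, with the cutoff $\eta$ compactly supported in a region where the hypotheses hold. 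Since $\gamma\in\Theta$, the curve $(t,\gamma(t))$ satisfies Hypotheses 1--3 on $(0,1)$, as remarked above. The only analytic issue is that Christ--Zhou need the smooth cutoff compactly supported inside $I_0$, whereas $\chi_{(\ell)}$ is supported on $[2^{-\ell-1},2^{2-\ell}]$, which degenerates toward $t=0$.

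\textbf{Step 1 (fixed scale).} For each fixed $\ell$, the support $[2^{-\ell-1},2^{2-\ell}]$ is a compact subinterval of $(0,1)$, at least once $\ell\ge 3$; the finitely many small $\ell$ are handled by shrinking the interval or by reparametrization. Hypotheses 1--3 hold on a neighbourhood of this interval, so Theorem 2.4 applies with some exponent $\sigma_\ell>0$ and constant $C_\ell$. To avoid an $\ell$-dependent exponent, cover $(0,1)$ by overlapping compact subintervals and use the real-analyticity of $\gamma$ in the following way.

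\textbf{Step 2 (rescaling).} Rescale $t\mapsto 2^{-\ell}t$ and $x\mapsto 2^{-\ell}x$, so that $\gamma$ is replaced by $2^{\ell}\gamma(2^{-\ell}\cdot)=\gamma_{2^{-\ell}}$. This is where condition (3), $|\gamma(t)|\lesssim|t|$, and the analytic Taylor expansion at $0$ enter: the rescaled curves $\gamma_{2^{-\ell}}$ on $[1/2,4]$ converge to the leading monomial $c\,t^{k}$ with $k\ge 1$. One needs either a uniform version of the Christ--Zhou bound over this compact family, or a single $\sigma$ together with constants $C_\ell\lesssim 2^{O(\ell)}$. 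Undoing the rescaling costs $2^{\ell}$ in the measure and at most $2^{2\sigma\ell}$ in the Sobolev weights $(1+|\xi|^2)^{\pm\sigma}$. Both factors are absorbed into $2^{\kappa\ell}$, and polynomial loss in $2^\ell$ is exactly what the statement permits. If the limiting monomial violates the hypotheses, for instance when $k=1$ and the rescaled curve degenerates to a line, we do not use uniformity. Instead we accept a crude $\ell$-dependent constant obtained by applying the theorem directly on the unscaled interval, and track how the constant depends on the distance to $0$ through lower bounds on $|\gamma'|$ and on $|J|+|J'|$ there. By analyticity these bounds are polynomial in $2^{-\ell}$.

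\textbf{Step 3 (bilinear localization and interpolation).} The trilinear bound with all three functions in $H^{-\sigma}$ still has to become the mixed statement with output in $H^{\sigma_0}$. Decompose $f_1,f_2,f_3$ into Littlewood--Paley pieces at frequencies $2^{j_1},2^{j_2},2^{j_3}$. On each piece:
\begin{itemize}
\item the Christ--Zhou bound gives decay $2^{-\sigma\max j_i}$;
\item the trivial $L^2\times L^2\to L^2$ bound and Bernstein give growth at most $2^{C(j_1+j_2+j_3)}$ relative to the $H^{-\sigma}$ norms.
\end{itemize}
When one frequency greatly exceeds the other two, the form vanishes or is negligible, since $T_\ell(f_1,f_2)$ has Fourier support controlled by the smaller inputs up to curvature-induced spreading. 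Interpolating the geometric decay against the trivial growth, complex interpolation in the Sobolev exponents yields $\sigma_0<\sigma$ with the pieces summable, which is the stated inequality.

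I expect the main obstacle to be Step 2, namely uniformity of the Christ--Zhou constant with polynomial dependence on $2^{\ell}$ as the interval approaches the possibly degenerate endpoint $t=0$. The black box is qualitative in its dependence on the curve.
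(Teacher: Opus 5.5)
\textbf{Verdict: there is a genuine gap.} It sits in Step 3 and in how you state the black box.

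\textbf{The black box and Step 3.} By your own opening duality reduction, a bound $|\Lambda_\ell|\lesssim 2^{\kappa\ell}\prod_{j=1}^3\|f_j\|_{H^{-\sigma}}$ \emph{is} the theorem. So if Christ--Zhou delivered that, Step 3 would be vacuous, and its premise that such a bound ``still has to become'' the $H^{\sigma_0}$ statement is mistaken. What the paper actually takes from Christ--Zhou's Theorem 2.4 is a bound for the spatially localized form $\mathcal{T}_\eta(f_1,f_2,f_3)=\int T(f_1,f_2)f_3\eta\,dx$, with $\eta\in C_0^\infty(\R)$. This bound has size $2^{\kappa_1\ell}\prod_j\|f_j\|_{W^{p,-\tau}}$ for some $p<\infty$ that need not equal $2$. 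The whole proof consists of converting this into an $H^{-\sigma_0}$ bound, and two ingredients for that are missing from your plan.
\begin{enumerate}
\item[(i)] \emph{Localization in $x$, not just in $t$.} The local bounds are summed over a partition of unity $\{\rho_n\}$. This uses two facts: only boundedly many $(n_1,n_2)$ interact with each $n_3$, and $\sum_n\|f\rho_n\|_{H^{-\sigma}}^2\lesssim\|f\|_{H^{-\sigma}}^2$.
\item[(ii)] \emph{Lebesgue-space bounds for $T$ on the opposite side of $2$ from $p$, followed by multilinear complex interpolation.} For $p<2$, the pointwise bound $|T(f_1,f_2)|\lesssim 2^{-\ell}(\mathcal{M}|f_1|^2)^{1/2}(\mathcal{M}|f_2|^2)^{1/2}$ gives $L^3\times L^3\to L^{3/2}$. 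For $p>2$, the paper interpolates $L^2\times L^2\to L^\infty$ (loss $2^{\kappa_0\ell}$, coming from $\min_{I_\ell}|\gamma'|$) with $L^{3/2}\times L^{3/2}\to L^{3/4}$, and arrives at $L^{9/5}\times L^{9/5}\to L^{9/4}$. The $L^{3/2}$ bound comes from an $L^1\times L^1\to L^{1/2}$ bound, proved by a dyadic decomposition around zeros of $1-\gamma'$; this is where $|1-\gamma'|+|\gamma''|\neq0$ is used. Interpolating the $W^{p,-\tau}$ bound with the $L^3$ or $L^{9/5}$ bound then lands on $W^{2,-(1-\theta)\tau}=H^{-\sigma_0}$.
\end{enumerate}
Your Littlewood--Paley/Bernstein scheme cannot replace (ii). For $p>2$, Bernstein passes from $\|P_jf\|_{L^p}$ to $\|P_jf\|_{L^2}$ only at a cost of $2^{j(1/2-1/p)}$, which in general swamps the small gain $2^{-\tau j}$. ``Interpolating decay against trivial growth'' yields nothing without a second estimate on the far side of $L^2$, and that is exactly the missing Lebesgue bound.

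\textbf{Step 2.} This step also fails as written, although the paper does not argue the point either. Under your isotropic rescaling, $\gamma_{2^{-\ell}}(t)=2^\ell\gamma(2^{-\ell}t)=c\,2^{\ell(1-k)}t^k+\cdots$ tends to $0$ if $k\ge2$ and to $ct$ if $k=1$. So the rescaled curve $(t,\gamma_{2^{-\ell}}(t))$ degenerates to a line in every case, violating Hypothesis 1. There is therefore no nondegenerate limiting family over which to take a uniform Christ--Zhou constant. Your fallback would need a quantitative form of Theorem 2.4, with constants and exponents controlled by lower bounds on $|\gamma'|$ and $|J|+|J'|$; a qualitative black box does not provide this. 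The paper does not attempt such a derivation. It simply records the localized Christ--Zhou bound with constant $2^{\kappa_1\ell}$. The $\ell$-losses it actually computes are the explicit ones in the auxiliary Lebesgue bounds: $\min_{I_\ell}|\gamma'|\simeq 2^{-\kappa_0\ell}$, and $|1-\gamma'|\simeq 2^{-k-(\kappa_0+1)\ell}$ on the pieces $I_{\ell,k}$. Interpolation then carries these into the factor $2^{\kappa\ell}$.
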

\begin{proof}
Let $f_1,f_2,f_3\in\mathcal{S}(\mathbb{R})$, we define the bilinear operator
\[T(f_1,f_2)(x)=\int_{\mathbb{R}} 
f_1(x-t)f_2(x-\gamma(t))\chi_{(\ell)}(t)\,dt,\]
and its associated trilinear form
\[\mathcal{T}(f_1,f_2,f_3)=\int_{\R}T(f_1,f_2)(x)f_3(x)\;dx.\]
By a standard duality argument and the fact that $\mathcal{S}(\mathbb{R})$ is dense in $H^{-\sigma_0}(\mathbb{R})$, it suffices to show that 
\begin{align}\label{desiredddd}
\mathcal{T}(f_1,f_2,f_3)\lesssim2^{\kappa\ell}\cdot\prod^3_{j=1}\|f_j\|_{H^{-\sigma_0}}.\end{align}

For any $\eta\in C_0^{\infty}(\R)$, define the localized trilinear form $    \mathcal{T}_{\eta}$ associated with $\eta$ by
\begin{align*}
     \mathcal{T}_{\eta}(f_1,f_2,f_3):=\int_{\R}T(f_1,f_2)(x)f_3(x)\eta(x)\,dx,\quad\forall f_j\in\mathcal{S}(\mathbb{R}).
\end{align*}
We claim the following local estimate:
\begin{equation}\label{claim}
    \mathcal{T}_{\eta}(f_1,f_2,f_3)\lesssim2^{\kappa\ell}\cdot\prod^3_{j=1}\|f_j\|_{H^{-\sigma_0}}.
\end{equation}
We first assume the claim \eqref{claim} and verify (\ref{desiredddd}). Consider a smooth partition of unity of the real line with functions $\{\rho_n\}_{n\in\Z}$ supported on intervals of length $2$. Then there exists a finite constant $a$ \footnote{the existence of $a$ is guaranteed by the behavior $|\gamma(t)|\lesssim |t|$ is when $t$ close to 0.} depending on $\gamma$ such that the supports of $T(f_1\rho_{n_1}, f_2\rho_{n_2})$ and $f_3\rho_{n_3}$ intersect only if $|n_3 - n_1| \le 2$ and $|n_3 - n_2| \le a$. Consequently, we get
\begin{align}\label{partition}
    \mathcal{T}(f_1,f_2,f_3)\lesssim\sum_{\substack{|n_3-n_1|\leq2\\|n_3-n_2|\leq a}}\int_{\R}T(f_1\rho_{n_1},f_2\rho_{n_2})(x)(f_3\rho_{n_3})(x)\,dx.
\end{align}
Thus, using \eqref{claim}, we obtain 
\begin{align*}
    \mathcal{T}(f_1,f_2,f_3)&\lesssim\sum_{\substack{|n_3-n_1|\leq2\\|n_3-n_2|\leq a}}2^{\kappa\ell}\cdot\prod^3_{j=1}\|f_j\rho_{n_j}\|_{H^{-\sigma_0}}\\
    &\lesssim 2^{\kappa\ell}\|f_3\|_{H^{-\sigma_0}}\left(\sum_{n_1\in\Z}\|f_1\rho_{n_1}\|_{H^{-\sigma_0}}^2\right)^{\frac{1}{2}}\left(\sum_{n_2\in\Z}\|f_2\rho_{n_2}\|_{H^{-\sigma_0}}^2\right)^{\frac{1}{2}}\\
    &\lesssim2^{\kappa\ell}\cdot\prod^3_{j=1}\|f_j\|_{H^{-\sigma_0}},
\end{align*}
which is the desired inequality.

Now, it remains to verify the claim \eqref{claim}.\\ We first establish $L^p-$estimates for $T$. Applying the Cauchy-Schwarz inequality, we obtain
\begin{align}\label{C-Sforbilinear}
|T(f_1,f_2)(x)| \leq\left(\int_{\mathbb{R}} |f_1(x-t)|^2\chi_{(\ell)}(t)\,dt\right)^{\frac{1}{2}} \left(\int_{\mathbb{R}} |f_2(x-\gamma(t))|^2\chi_{(\ell)}(t)\,dt\right)^{\frac{1}{2}}.\end{align}
Observe that the first factor can be dominated pointwise by either $\|f_1\|_{L^2}$ or $2^{-\frac{\ell}{2}}(\mathcal{M}(|f_1|^2)(x))^{\frac{1}{2}}$, where $\mathcal{M}$ stands for the Hardy--Littlewood maximal function.\\
For the second factor, we note that the support of the smooth cutoff function satisfies $\operatorname{supp}(\chi_{(\ell)}) \subset [2^{-\ell-1}, 2^{-\ell+2}] =: I_\ell$ and $\gamma'$ does not vanish on $I_{\ell}$, then the change of variables $s = \gamma(t)$ yields 
\begin{align}\label{f2estimate}  
\left(\int_{\mathbb{R}} |f_2(x-\gamma(t))|^2\chi_{(\ell)}(t)\,dt\right)^{\frac{1}{2}} &\leq\left(\int_{\gamma(I_\ell)} |f_2(x-s)|^2\,\frac{ds}{|\gamma'(\gamma^{-1}(s))|}\right)^{\frac{1}{2}}\notag\\
&\leq\left(\frac{1}{c_\ell}\int_{-\gamma(2^{-\ell-1})}^{\gamma(2^{-\ell+2})} 
|f_2(x-s)|^2\,ds\right)^{\frac{1}{2}},\end{align}
where $c_\ell=\min\limits_{t\in I_\ell}|\gamma'(t)|>0$. Besides, since $\gamma$ is real-analytic, there exists $\kappa_0>0$ depending on $\gamma$ so that 
$$2^{-\kappa_0\ell} \simeq \min\limits_{t\in I_\ell}|\gamma'(t)|$$ and, together with (\ref{f2estimate}), we can deduce the pointwise bound 
\[\left(\int_{\mathbb{R}} |f_2(x-\gamma(t))|^2\chi_{(\ell)}(t)\,dt\right)^{\frac{1}{2}}\lesssim\min\left\{2^{-\frac{\ell}{2}}\left(\mathcal{M}(|f_2|^2)(x)\right)^{\frac{1}{2}},2^{\kappa_0\ell}\|f_2\|_{L^2}\right\}.\]
 Combining the bounds for both factors with (\ref{C-Sforbilinear}), we obtain
\begin{align}\label{L2L2}
    \|T(f_1,f_2)\|_{L^{\infty}}\lesssim2^{\kappa_0\ell}\|f_1\|_{L^{2}}\|f_2\|_{L^{2}},
\end{align}
and
\begin{align}\label{T-Holderestimates}
    T(f_1,f_2)(x)\lesssim2^{-\ell}\left(\mathcal{M}(|f_1|^2)(x)\right)^{\frac{1}{2}}\left(\mathcal{M}(|f_2|^2)(x)\right)^{\frac{1}{2}}.
\end{align}

Next, applying the Fubini's theorem and the change of variables, we have
\begin{align*}
    \int_{\R}|T(f_1,f_2)(x)|\;dx&\leq\int_{\R}\int_{\R}|f_1(x-t)f_2(x-\gamma(t))\chi_{(\ell)}(t)|\;dtdx\\
    &=\int_{\R}|f_1(x)|\int_{\R}|f_2(x+t-\gamma(t))\chi_{(\ell)}(t)|\;dxdt.
\end{align*}
 We analyze the behavior of the phase derivative $1-\gamma'(t)$ on the support $I_\ell = [2^{-\ell-1}, 2^{-\ell+2}]$ by considering two cases. 

\subsubsection*{Case $1$: The phase derivative has a root in $I_\ell$.} Without loss of generality, assume the root occurs at $2^{-\ell-1}$ so that $\gamma'(2^{-\ell-1})=1$. By the hypotheses on $\gamma$, we have $\gamma''(2^{-\ell-1}) \neq 0$ and hence the Taylor's theorem implies that $|1-\gamma'(t)|$ is comparable to $|2^{-\ell-1}-t|$ on $I_{\ell}$.

We isolate the singularity by performing a dyadic decomposition of the operator around the root:
$$T(f_1,f_2)(x) = \sum_{k\geq0} \int_{I_{\ell,k}} f_1(x-t)f_2(x-\gamma(t))\chi_{(\ell)}(t)\;dt =: \sum_{k\geq0} T_k(f_1,f_2)(x),$$
where $I_{\ell,k} = [2^{-\ell-1}(1+7 \cdot 2^{-k-1}), 2^{-\ell-1}(1+7 \cdot 2^{-k})]$.\\
Due to the real analyticity of $\gamma$, we deduce that $|1-\gamma'(t)| \simeq 2^{-k-(\kappa_0+1)\ell}$ for all $t \in I_{\ell,k}$ and $\kappa_0 \geq 0$ depending on $\gamma$, which gives that
\begin{align}\label{L^111forTk} 
\|T_k(f_1,f_2)\|_{L^1}\lesssim2^{k+(\kappa_0+1)\ell}\|f_1\|_{L^1}\|f_2\|_{L^1}.\end{align}
To sum these localized estimates, from the Cauchy--Schwarz inequality and (\ref{L^111forTk}), we tile $\mathbb{R} = \cup_{n} I_n$ with disjoint intervals of length $2^{-\ell-k}$ and obtain
\begin{align*}
    \|T_k(f_1,f_2)\|_{L^{1/2}}^{\frac{1}{2}}&\leq\sum_{\substack{|n-n_1|\leq2\\|n-n_2|\leq a}}\int_{I_n}|T(f_1\chi_{I_{n_1}},f_2\chi_{I_{n_2}})(x)|^{\frac{1}{2}}\;dx\\
    &\leq \sum_{\substack{|n-n_1|\leq2\\|n-n_2|\leq a}}2^{-\frac{\ell+k}{2}}\left(\int_{I_n}|T(f_1\chi_{I_{n_1}},f_2\chi_{I_{n_2}})(x)|\;dx\right)^{\frac{1}{2}}\\
    &\lesssim\sum_{\substack{|n-n_1|\leq2\\|n-n_2|\leq a}}2^{-\frac{\ell+k}{2}}2^{\frac{k+(\kappa_0+1)\ell}{2}}\|f_1\chi_{I_{n_1}}\|_{L^1}^{\frac{1}{2}}\|f_2\chi_{I_{n_2}}\|_{L^1}^{\frac{1}{2}}\\
    &\lesssim2^{\frac{\kappa_0\ell}{2}}\|f_1\|_{L^1}^{\frac{1}{2}}\|f_2\|_{L^1}^{\frac{1}{2}},
\end{align*}
where we use the fact that for a fixed $n$ there are finitely many $n_1,n_2$ for which $T_k(f_1\chi_{I_{n_1}},f_2\chi_{I_{n_2}})$ is non-zero. 

Interpolating the above $L^1\times L^1\to L^{1/2}-$estimate with the trivial bound $$\|T_k(f_1,f_2)\|_{L^{\infty}} \leq 2^{-k-\ell}\|f_1\|_{L^{\infty}}\|f_2\|_{L^{\infty}}$$ and summing in $k\geq0$, we obtain
\begin{equation}\label{Holder1}
\|T(f_1,f_2)\|_{L^{p/2}} \lesssim 2^{\kappa_0\ell}\|f_1\|_{L^p}\|f_2\|_{L^p}, \quad \forall p>1.
\end{equation}

\subsubsection*{Case $2$: The phase derivative is non-vanishing.} If $1-\gamma'(t)$ is non-vanishing on $I_\ell$, a further change of variables $s = x+t-\gamma(t)$ immediately implies 
$$\|T(f_1,f_2)\|_{L^1} \lesssim \|f_1\|_{L^1}\|f_2\|_{L^1}.$$
Following a localization argument similar to the previous case, along with Cauchy-Schwarz inequality and interpolation with trivial $L^{\infty}\times L^{\infty}\to L^{\infty}-$bound, we can get
$$\|T(f_1,f_2)\|_{L^{p/2}} \lesssim \|f_1\|_{L^p}\|f_2\|_{L^p}$$
for all $p \geq 1$.

We recall the estimate for trilinear form $\mathcal{T}_{\eta}$ from \cite[Theorem 2.4]{MR4776384} which guarantees the existence of $p < \infty$ and $\tau > 0$ such that
\begin{equation}\label{Christ}
    \mathcal{T}_{\eta}(f_1,f_2,f_3)\lesssim2^{\kappa_1\ell}\prod_{j=1}^3\|f_j\|_{W^{p,-\tau}}.
\end{equation}
Note that if $p=2$ in the equation above, it gives us \eqref{claim} with $\sigma_0=\tau$. It remains to prove \eqref{claim} when $p \neq 2$ in \eqref{Christ}.\\
Consider the case $p<2$ in \eqref{Christ}. From \eqref{T-Holderestimates}, we get $T$ maps $L^3\times L^3\to L^{\frac{3}{2}}$, which implies
\begin{equation}\label{L3}
    \mathcal{T}_{\eta}(f_1,f_2,f_3)\lesssim2^{-\ell}\prod_{j=1}^3\|f_j\|_{L^{3}}.
\end{equation}
Now, applying the complex interpolation \cite[Theorem 7.2.9]{Grafakos14} between \eqref{Christ} and \eqref{L3} proves the desired claim in \eqref{claim} for suitable constants $\kappa$ and $\sigma_0>0$.

Next, we consider $p>2$ in \eqref{Christ}. Interpolating between \eqref{Holder1} with $p_1=p_2=\frac{3}{2}$ and \eqref{L2L2}, we obtain that $T$ maps $L^{9/5}(\R)\times L^{9/5}(\R)$ to $L^{9/4}(\R)$ with operator norm bounded by $O(2^{\kappa_0\ell})$. Consequently, we get
\begin{equation}\label{L9/5}
    \mathcal{T}_{\eta}(f_1,f_2,f_3)\lesssim2^{\kappa_0\ell}\prod_{j=1}^3\|f_j\|_{L^{9/5}}.
\end{equation}
Again, using the complex interpolation between \eqref{Christ} and \eqref{L9/5} yields the desired claim \eqref{claim}.
\end{proof}

\begin{remark}
    We remark that in the paper of M. Hsu and F. Y.-H. Lin \cite{FredMartin}, a general smoothing inequality is established. As indicated in their preprint, page $4$, this smoothing inequality guarantees the existence of the pattern  \begin{align*}
        \left\{(x,y),\,(x-t,y),\,(x,y+\Gamma(t))\right\}
    \end{align*}
     for a set $E\subset\mathbb{R}^2$ with positive Lebesgue measure, where $\Gamma$ is the curve family. We highlight that, in particular, this result guarantees the existence of the two-point pattern in the real line whenever the set has positive Lebesgue measure.
\end{remark}

\subsection{Consequence of the smoothing inequality}
Let $\phi\in\mathcal{S}(\mathbb{R})$ be a non-negative and radial smooth bump function such that $\widehat{\phi}(0)=1$, $\phi(x)\geq\frac{1}{2}$ for all $|x|\leq\frac{1}{2}$ and $\supp(\phi)\subset [-1,1]$. For any $\varepsilon>0$, define $\phi_\varepsilon(x):=\varepsilon^{-1}\phi(x/\varepsilon)$ to be the $L^1$-dilation of $\phi$ for all $x\in\mathbb{R}$.  For any $\varepsilon>0$, define $\mu_\varepsilon:=\mu*\phi_\varepsilon$. 
\begin{cor}\label{smoothingprop}  Let $\gamma\in\Theta$. There exists $\sigma_0>0$ and $\kappa>0$ such that for all $\sigma\in(0,\sigma_0)$ 
    \begin{align*}
        \left|\iint f_1(x-t)f_2(x-\gamma(t))\chi_{(\ell)}(t)\,dt\,d\mu(x)\right|\lesssim 2^{\kappa\ell}\cdot\prod^2_{j=1}\|f_j\|_{H^{-\sigma}}\cdot\|\mu\|_{H^{-\sigma}},
    \end{align*}
    for all $f_1,f_2\in\mathcal{S}(\mathbb{R}).$
\end{cor}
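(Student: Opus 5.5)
The plan is to reduce the corollary to Theorem \ref{smoothing} by duality, after first replacing $\mu$ by its mollification $\mu_\varepsilon=\mu*\phi_\varepsilon$ so that every pairing is between genuine functions.

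\textbf{Step 1 (pairing with a smooth density).} Write
\[
T(f_1,f_2)(x)=\int_{\R} f_1(x-t)f_2(x-\gamma(t))\chi_{(\ell)}(t)\,dt .
\]
For $f_1,f_2\in\mathcal{S}(\R)$ this is a bounded smooth function: the $t$-integral runs over the compact set $\supp(\chi_{(\ell)})$, on which $\gamma$ is real-analytic with controlled derivatives. Hence $\int T(f_1,f_2)\,d\mu$ makes sense for any finite measure $\mu$. We first treat $\mu_\varepsilon\in C_c^\infty$. Then Plancherel gives
\[
\Big|\int T(f_1,f_2)\mu_\varepsilon\,dx\Big| = \Big|\int \widehat{T(f_1,f_2)}(\xi)\overline{\widehat{\mu_\varepsilon}(\xi)}\,d\xi\Big| \le \|T(f_1,f_2)\|_{H^{\sigma}}\|\mu_\varepsilon\|_{H^{-\sigma}} .
\]
This uses Cauchy--Schwarz with the weights $(1+|\xi|^2)^{\pm\sigma/2}$.

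\textbf{Step 2 (apply the theorem for all $\sigma<\sigma_0$).} Theorem \ref{smoothing} gives $\|T(f_1,f_2)\|_{H^{\sigma_0}}\lesssim 2^{\kappa\ell}\prod_j\|f_j\|_{H^{-\sigma_0}}$. For $\sigma\in(0,\sigma_0)$ we have $\|T\|_{H^{\sigma}}\le\|T\|_{H^{\sigma_0}}$ and $\|f_j\|_{H^{-\sigma_0}}\le\|f_j\|_{H^{-\sigma}}$, since the Sobolev norms are monotone in the index. So the bound holds with $\sigma$ in place of $\sigma_0$ at no extra cost. This is the only role of the range $\sigma\in(0,\sigma_0)$ in the statement.

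\textbf{Step 3 (remove the mollification).} Since $|\widehat{\phi_\varepsilon}|=|\widehat\phi(\varepsilon\,\cdot)|\le 1$ (because $\phi\ge0$ and $\widehat\phi(0)=1$), we get $\|\mu_\varepsilon\|_{H^{-\sigma}}\le\|\mu\|_{H^{-\sigma}}$. On the other side, $T(f_1,f_2)$ is bounded and continuous and $\mu_\varepsilon\to\mu$ weakly, so $\int T(f_1,f_2)\mu_\varepsilon\,dx\to\int T(f_1,f_2)\,d\mu$. We may assume $\|\mu\|_{H^{-\sigma}}<\infty$, since otherwise the claim is trivial. Letting $\varepsilon\to0$ then gives the corollary.

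The only step needing care is this last limit. Weak convergence requires $T(f_1,f_2)$ to be bounded and continuous, which the Schwartz hypothesis guarantees: one can differentiate under the integral sign over the compact $t$-support. So I expect no real obstacle; the content of the corollary lies in Theorem \ref{smoothing}.
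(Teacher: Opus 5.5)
Your proposal is correct and follows the paper's route. You mollify $\mu$, pair $T(f_1,f_2)$ against $\mu_\varepsilon$ by $H^{\sigma}$--$H^{-\sigma}$ duality, invoke Theorem \ref{smoothing}, and let $\varepsilon\to0$ using weak convergence. Your version is in fact a bit more careful than the paper's sketch: you justify passing from $\sigma_0$ to $\sigma<\sigma_0$ via monotonicity of Sobolev norms, you check that $T(f_1,f_2)$ is bounded and continuous, and you replace the paper's asserted limit $\|\mu_\varepsilon\|_{H^{-\sigma}}\to\|\mu\|_{H^{-\sigma}}$ with the uniform bound $|\widehat{\phi}(\varepsilon\xi)|\le 1$.
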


\begin{proof}
     Since $\mu_\varepsilon$ converges weakly to $\mu$ as $\varepsilon\to 0^+$, then
     \begin{align*}
    &\left|\iint f_1(x-t)f_2(x-\gamma(t))\chi_{(\ell)}(t)\,dt\,d\mu(x)\right|\\
    &=\lim_{\varepsilon\to 0^+}     \left|\iint \mu_\varepsilon(x)f_1(x-t)f_2(x-\gamma(t))\chi_{(\ell)}(t)\,dt\,dx\right|,    
     \end{align*}
     and hence, by the duality argument and Theorem \ref{smoothing}, 
       \begin{align*}
    \left|\iint f_1(x-t)f_2(x-\gamma(t))\chi_{(\ell)}(t)\,dt\,d\mu(x)\right|&\lesssim\lim_{\varepsilon\to 0^+}       2^{\kappa\ell}\cdot\prod^2_{j=1}\|f_j\|_{H^{-\sigma}}\cdot\|\mu_\varepsilon\|_{H^{-\sigma}}\\
    &=2^{\kappa\ell}\cdot\prod^2_{j=1}\|f_j\|_{H^{-\sigma}}\cdot\|\mu\|_{H^{-\sigma}}.
     \end{align*}
\end{proof}
With the Corollary \ref{smoothingprop}, we can prove the configuration proposition which gives a path to show the existence of a certain pattern.
\begin{prop}\label{PropofPATTERN}
Given $\gamma\in\Theta$. Let $\sigma_0>0$ the parameters given in Corollary \ref{smoothingprop} and let $\mu\in\mathbf{M}(E)$ with the condition that $I_{1-\sigma}(\mu)<\infty$ for some $\sigma\in(0,\sigma_0)$.  Suppose that 
\begin{align}\label{patterncondition}
\liminf_{\varepsilon\to0}\iint \mu_\varepsilon(x-t)\mu_\varepsilon(x-\gamma(t))\chi_{(\ell)}(t)\,dt\,d\mu(x)>0,
\end{align}
for some $\ell\in\mathbb{N}\cup\{0\}$.    Then there is $x\in E$ and non-zero $t\in\mathbb{R}$ such that $x,x-t,x-\gamma(t)\in E.$
\end{prop}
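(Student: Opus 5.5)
Here is how I would prove Proposition \ref{PropofPATTERN}: argue by contradiction and use compactness, with Corollary \ref{smoothingprop} supplying the uniform control needed to pass to the limit. Suppose no such configuration exists. Consider the compact set
\[K=\{(x,t): x\in E,\ t\in\supp\chi_{(\ell)}\subset[2^{-\ell-1},2^{-\ell+2}]\}.\]
Since $t$ stays away from $0$, the configuration is nontrivial whenever $x-t\in E$ and $x-\gamma(t)\in E$. The assumption therefore says that for every $(x,t)\in K$, either $x-t\notin E$ or $x-\gamma(t)\notin E$. Both maps $(x,t)\mapsto \operatorname{dist}(x-t,E)$ and $(x,t)\mapsto\operatorname{dist}(x-\gamma(t),E)$ are continuous, so their maximum is a continuous positive function on $K$. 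Hence there is $\eta>0$ with
\[\max\{\operatorname{dist}(x-t,E),\operatorname{dist}(x-\gamma(t),E)\}\geq\eta \quad\text{for all } (x,t)\in K.\]

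Next I would show this forces the integral in \eqref{patterncondition} to vanish for small $\varepsilon$. Since $\supp\phi\subset[-1,1]$, we have $\supp\mu_\varepsilon\subset E+[-\varepsilon,\varepsilon]$. For $\varepsilon<\eta$ and any $(x,t)\in K$, at least one of $\mu_\varepsilon(x-t)$ and $\mu_\varepsilon(x-\gamma(t))$ is therefore zero. The integrand vanishes identically on the domain of integration, because $\mu$ is supported in $E$ and $\chi_{(\ell)}$ is supported in the $t$-range of $K$. So the integral is $0$ for all $\varepsilon<\eta$, which contradicts the positivity of the $\liminf$.

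This argument essentially only needs the compactness of $E$. I would still check where the hypothesis $I_{1-\sigma}(\mu)<\infty$ and Corollary \ref{smoothingprop} enter: they make the configuration integral well defined and uniformly bounded. By Proposition \ref{EnergyandSobolev}, $I_{1-\sigma}(\mu)<\infty$ gives $\|\mu\|_{H^{-\sigma/2}}<\infty$. Moreover $\|\mu_\varepsilon\|_{H^{-\sigma/2}}\le\|\mu\|_{H^{-\sigma/2}}$ because $|\widehat\phi|\le 1$. Corollary \ref{smoothingprop}, applied with $\sigma/2<\sigma_0$, then bounds the $\varepsilon$-integrals by $2^{\kappa\ell}\|\mu\|_{H^{-\sigma/2}}^3$ uniformly in $\varepsilon$, so the $\liminf$ in \eqref{patterncondition} is a finite quantity. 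If one preferred a direct argument producing $x,t$ from a limiting measure, the same uniform bound together with bilinearity would show that $\{\mu_\varepsilon\}$ is Cauchy in $H^{-\sigma/2}$. The trilinear form would then converge to a limit supported on configurations in $E$, and that limit would have to be positive.

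The main point of care is the support bookkeeping: $t\in\supp\chi_{(\ell)}$ excludes $t=0$, so the configuration is nontrivial, and $\gamma(t)\neq t$ is not needed for nontriviality in the sense of the statement. The contradiction argument via the uniform distance $\eta$ is the cleanest route, and I expect no real obstacle beyond this.
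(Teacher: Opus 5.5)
Your argument is correct, and it takes a genuinely different and more elementary route than the paper.

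\textbf{The paper's route.} The paper defines the functional $\vartheta(f)$ as the $\liminf$ of the configuration integral weighted by a continuous $f(x,y)$. It uses $I_{1-\sigma}(\mu)<\infty$, Corollary \ref{smoothingprop} and a Stone--Weierstrass/Cauchy argument to show that the limit actually exists and that $|\vartheta(f)|\le C\|f\|_{L^\infty}$. The Riesz representation theorem then gives a nonnegative measure with $\vartheta([0,1]^2)>0$. A partition of unity shows this measure is supported on the closed configuration set $\mathcal{P}$. The decisive step there is that each localized integral vanishes once $\varepsilon$ is small.

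\textbf{Your route.} You apply that same vanishing mechanism once, globally. Because $\chi_{(\ell)}$ already confines $t$ to $[2^{-\ell-1},2^{-\ell+2}]$, compactness gives a uniform gap $\eta$, and the integrals are identically zero for $\varepsilon<\eta$. This removes any need for the limit to exist, which the paper needs because a $\liminf$ is not linear. It also shows that the energy hypothesis and the smoothing inequality are superfluous for this proposition; they are genuinely needed only to establish the lower bound \eqref{patterncondition} in Proposition \ref{Lemmaofconfcheck}. You can even drop closedness of $E$ by running the argument with $\supp\mu$ in place of $E$, since $\supp\mu$ is compact, contained in $E$, and $\supp\mu_\varepsilon\subset\supp\mu+[-\varepsilon,\varepsilon]$.

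\textbf{What each approach buys.} Yours is shorter and uses only compactness, continuity of $\gamma$, and $\supp\phi\subset[-1,1]$. The paper's route produces a nonzero limiting measure living on $\mathcal{P}$, which carries more structural information about the configuration set. That framework is also what one needs when the $t$-cutoff does not exclude degenerate configurations: then one must show quantitatively that the limit measure does not concentrate on $t=0$, and the analytic input becomes essential.

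Your side remark on Sobolev indices is right: with the paper's normalization, $I_{1-\sigma}(\mu)<\infty$ controls $\|\mu\|_{H^{-\sigma/2}}$. As you note, though, none of that is used in your contradiction argument.
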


The following technical lemma is needed before we show this proposition. Fix $\gamma\in\Theta$, define a linear function $ \vartheta$ on functions $f:[0,1]^2\to\mathbb{R}$ by
\begin{align}\label{functional}
  \vartheta(f):=\liminf_{\varepsilon\to0}\iint f(x,y)\mu_\varepsilon(y)\mu_\varepsilon(x-\gamma(x-y))\chi_{(\ell_0)}(x-y)\,dy\,d\mu(x).
\end{align}
\begin{lem}\label{Lemmafunction}
    The limit in (\ref{functional}) exists for all continuous function $f$ and
    \begin{align*}
    \left|\vartheta(f)\right|\leq C\cdot\|f\|_{L^\infty},
\end{align*}
for some $C>0$, independent of $f$.
\end{lem}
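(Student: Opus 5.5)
We need to show that the limit defining $\vartheta(f)$ exists for continuous $f$ and that $|\vartheta(f)|\le C\|f\|_{L^\infty}$. The plan is to prove the bound first, at every $\varepsilon$ uniformly, and then get existence of the limit from a density argument. For the change of variables, write $t=x-y$; the integral then becomes
\[
\iint f(x,x-t)\,\mu_\varepsilon(x-t)\,\mu_\varepsilon(x-\gamma(t))\,\chi_{(\ell_0)}(t)\,dt\,d\mu(x).
\]

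\textbf{Step 1 (uniform bound).} Since $\mu\ge0$, $\phi\ge0$ and $\chi\ge0$ (we may assume $\chi\ge0$), we have
\[
|\text{integral}|\le \|f\|_{L^\infty}\iint \mu_\varepsilon(x-t)\,\mu_\varepsilon(x-\gamma(t))\,\chi_{(\ell_0)}(t)\,dt\,d\mu(x).
\]
The right-hand integral is the configuration form with $f_1=f_2=\mu_\varepsilon$. Corollary \ref{smoothingprop} applies because $\mu_\varepsilon\in\mathcal{S}$ (it is smooth with compact support), and it bounds the integral by
\[
2^{\kappa\ell_0}\,\|\mu_\varepsilon\|_{H^{-\sigma}}^2\,\|\mu\|_{H^{-\sigma}}.
\]
Since $|\widehat{\phi_\varepsilon}|\le 1$, we have $\|\mu_\varepsilon\|_{H^{-\sigma}}\le\|\mu\|_{H^{-\sigma}}$. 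In dimension one, Proposition \ref{EnergyandSobolev} with $d=1$ and $s=1-\sigma$ gives $\|\mu\|_{H^{-\sigma/2}}^2\simeq I_{1-\sigma}(\mu)<\infty$ (up to handling low frequencies, which is fine since $\mu$ is finite). Hence $\|\mu\|_{H^{-\sigma}}\le\|\mu\|_{H^{-\sigma/2}}<\infty$, and the constant is $C=2^{\kappa\ell_0}\|\mu\|_{H^{-\sigma}}^3$, independent of $f$ and $\varepsilon$. If needed, the $\sigma$ in the corollary is shrunk so that it is admissible there. This gives $|\vartheta(f)|\le C\|f\|_\infty$ with $\vartheta$ read as a liminf.

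\textbf{Step 2 (existence of the limit for nice $f$).} Take $f(x,y)=g(x)h(y)$ with $g,h$ smooth, or more generally trigonometric polynomials on a neighbourhood of $[0,1]^2$. The integral becomes the form applied to $f_1=h\mu_\varepsilon$ and $f_2=\mu_\varepsilon$, tested against the measure $g\,d\mu$. Rather than the corollary, it is cleaner to use Theorem \ref{smoothing} directly. That theorem shows $T(f_1,f_2)\in H^{\sigma_0}$, and the map $(f_1,f_2)\mapsto T(f_1,f_2)$ is bilinear and continuous from $H^{-\sigma_0}\times H^{-\sigma_0}$ into $H^{\sigma_0}$. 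Now $h\mu_\varepsilon\to h\mu$ and $\mu_\varepsilon\to\mu$ in $H^{-\sigma_0}$: by dominated convergence, $|\widehat\mu(\xi)|^2|1-\widehat\phi(\varepsilon\xi)|^2(1+|\xi|^2)^{-\sigma_0}\to0$ with integrable majorant, using $\mu\in H^{-\sigma/2}\subset H^{-\sigma_0}$ after taking $\sigma\le 2\sigma_0$. Multiplication by a smooth $h$ is bounded on $H^{-\sigma_0}$. Therefore $T(h\mu_\varepsilon,\mu_\varepsilon)$ converges in $H^{\sigma_0}$. Pairing with $g\mu\in H^{-\sigma_0}$ then gives convergence of the integral, since the $H^{\sigma_0}$–$H^{-\sigma_0}$ pairing is continuous. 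So the limit exists for all finite sums of such products.

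\textbf{Step 3 (density).} By Stone–Weierstrass, such finite sums are uniformly dense in $C([0,1]^2)$. Given continuous $f$ and a product-sum $p$ with $\|f-p\|_\infty<\eta$, Step 1 applied to $f-p$ shows that the $\varepsilon$-integrals for $f$ and $p$ differ by at most $C\eta$ uniformly in $\varepsilon$. Hence the integrals for $f$ are Cauchy as $\varepsilon\to0$, the limit exists, and the bound passes to the limit.

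The main obstacle is Step 2, namely justifying the convergence in the negative Sobolev norm together with the boundedness of multiplication on $H^{-\sigma_0}$. The key point is that the smoothing estimate, being multilinear and bounded, turns weak-type convergence of $\mu_\varepsilon$ into convergence of the trilinear form, which pure weak convergence alone would not give.
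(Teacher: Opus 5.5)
Your proposal is correct and follows essentially the same route as the paper. Both arguments get a uniform-in-$\varepsilon$ bound from Corollary \ref{smoothingprop} and the finite-energy hypothesis, prove existence of the limit for trigonometric polynomials via Theorem \ref{smoothing} and a Cauchy argument, and finish with Stone--Weierstrass density. Your Step 2 also usefully spells out what the paper leaves implicit: $\mu_\varepsilon\to\mu$ in $H^{-\sigma_0}$ by dominated convergence, multiplication by smooth $h$ is bounded on $H^{-\sigma_0}$, and the bilinear map and the $H^{\sigma_0}$--$H^{-\sigma_0}$ pairing are continuous.
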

\begin{proof}
For every $\varepsilon>0$, we have
\begin{align*}
 &\left|\iint f(x,y)\mu_\varepsilon(y)\mu_\varepsilon(x-\gamma(x-y))\chi_{(\ell_0)}(x-y)\,dy\,d\mu(x) \right|  \\
 &\leq\|f\|_{L^\infty}\cdot\iint \mu_\varepsilon(y)\mu_\varepsilon(x-\gamma(x-y))\chi_{(\ell_0)}(x-y)\,dy\,d\mu(x),
\end{align*}
which is bounded by, from Corollary \ref{smoothingprop},
\begin{align*}
2^{\kappa\ell_0}\cdot\|f\|_{L^\infty}\cdot\|\mu_\varepsilon\|^2_{H^{-\sigma}}\cdot\|\mu\|_{H^{-\sigma}}\lesssim 2^{\kappa\ell_0}\cdot\|f\|_{L^\infty}\cdot\|\mu\|^3_{H^{-\sigma}},\quad\forall \sigma\in(0,\sigma_0).
\end{align*}
By the assumption and Proposition \ref{EnergyandSobolev}, there exists $C$, independent of $f$ such that
\begin{align*}
    \left|\vartheta(f)\right|\leq C\cdot\|f\|_{L^\infty},
\end{align*}
if the limit (\ref{functional}) in exists.

To verify the existence of the limit, by the Stone--Weierstrass theorem, it remains to show that the limit exists for every smooth function $f$ whose Fourier series consists of only finitely many terms. The existence of the limit then follows from the Cauchy criterion and Theorem \ref{smoothing}.
\end{proof}

Now, we are ready to prove Proposition \ref{PropofPATTERN}.
\begin{proof}
By Lemma \ref{Lemmafunction} and the Riesz representation theorem, we get a non-negative measure $\vartheta$ defined by (\ref{functional}) and $\vartheta([0,1]^2)>0$ from the hypothesis (\ref{patterncondition}).

 Define the closed set
\begin{align*}
\mathcal{P}:=\{(x, y)\in [0, 1]^2: x, y, x-\gamma(x-y)\in E\}.
\end{align*}
By the definition of the measure $\vartheta$, it suffices to prove that $\vartheta$ is supported on $\mathcal{P}$.\\
Let $f$ be a continuous function with $\supp(f)$ disjoint from $\mathcal{P}$. We need to prove that $\vartheta( f)=0$. Note that $\text{dist}(\supp(f), \mathcal{P})>0$. Using a partition of unity, we are able to write $f$ as a finite sum $\sum f_j$, where for each $j$, the function $f_j$ is continuous and satisfies at least one of the following:
\begin{align}\label{firstsecond}
\begin{cases}
     \text{dist}(\supp(f_j), E\times [0, 1])>0,\\
 \text{dist}(\supp(f_j), [0, 1]\times E\})>0,
\end{cases}
\end{align}
or
\begin{align}\label{third}
\text{dist}\Big(\big\{x-\gamma(x-y): (x, y)\in \supp(f_j)\big\}, E\Big)>0.
\end{align}
We will prove that $\langle \nu, f_j\rangle=0$ for every $j$. If $f_j$ satisfies either the first or the second condition in (\ref{firstsecond}), then the integral in (\ref{functional}) vanishes for every $\epsilon$ small enough. If $f_j$ satisfies the third condition in (\ref{third}), then the support of $f_j$ is a positive distance from the support of $\mu_{\epsilon}(x -\gamma(x-y))$ for sufficiently small $\epsilon$, so the integral (\ref{functional}) is again vanishes if $\epsilon$ is sufficiently small. The proof is complete.
\end{proof}

\section{Construction of measure with energy and spectral gap conditions}\label{sec:4}
In this section, we will construct a measure that satisfies {\it the energy} and {\it the spectral gap condition}. This construction was first introduced in \cite{MR4609784}. For the generalization of the construction of the measure in the one dimension and higher dimension case, readers are recommended to refer to the paper \cite{MR3617376,MR5038694}.
We modify those ideas in our construction. 
\begin{lem}\label{lemmaofLoca}
    Let $E\subset[0,1]$ be a compact subset with large Hausdorff dimension $\HD(E)>1-\varepsilon$ and $N\geq1$. Define $s=s(N):=N(1-\varepsilon)$. Then for each $\delta>0$ and $J$, there is $Q\in D^*_J$ such that
    \begin{align}\label{Locate}
        \Hau^s_{D^*_J,\,\infty}(E\cap Q)\geq (1-\delta) \ell_{D^*}(Q)^s;
    \end{align}
    equivalently, for each $\delta>0$ and $J$, there is $Q\in D^*_J$ such that
    \begin{align}\label{Locate'}
        \Hau^s_{D^*,\,\infty}(E\cap Q)\geq (1-\delta) \ell_{D^*}(Q)^s.
    \end{align}
\end{lem}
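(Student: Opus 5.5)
We argue by contradiction, using the standard density-point style argument for Hausdorff contents. Since $\HD(E)>1-\varepsilon$ and $s/N=1-\varepsilon<\HD(E)$, Lemma \ref{standardVSgeneral} gives $\Hau^s_{D^*,\infty}(E)>0$.

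\textbf{Reduction to a single cube of generation $\geq J$.} Fix $J$. The set $E\subset[0,1]$ is covered by the finitely many cubes $Q\in D^N_{J'}$ meeting $[0,1]$, for any $J'\geq J$. Each such $E\cap Q$ lies inside an element of $D^*_J$. By subadditivity, some of these pieces have positive $\Hau^s_{D^*,\infty}$ content. Lemma \ref{equivalent} then gives $\Hau^s_{D^*_J,\infty}(E\cap Q)=\Hau^s_{D^*,\infty}(E\cap Q)$ for every $Q\in D^*_J$. Hence \eqref{Locate} and \eqref{Locate'} are equivalent, and it suffices to prove \eqref{Locate}.

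\textbf{Main step.} Suppose, for contradiction, that for some $\delta>0$ and $J$, every $Q\in D^*_J$ satisfies
\[
\Hau^s_{D^*_J,\infty}(E\cap Q)<(1-\delta)\ell_{D^*}(Q)^s .
\]
Take any cover $\{Q_i\}\subset D^*_J$ of $E$ with
\[
\sum_i \ell_{D^*}(Q_i)^s\leq \Hau^s_{D^*_J,\infty}(E)+\eta .
\]
We may assume the $Q_i$ are disjoint maximal dyadic cubes, using the nested structure of $D^*$. Replacing each $Q_i$ by a near-optimal cover of $E\cap Q_i$, whose cubes lie in $D^*_J$ (indeed inside $Q_i$ after intersecting), gives a new cover of $E$ with total cost at most
\[
\sum_i \bigl((1-\delta)\ell_{D^*}(Q_i)^s+\eta 2^{-i}\bigr)\leq (1-\delta)\bigl(\Hau^s_{D^*_J,\infty}(E)+\eta\bigr)+\eta .
\]
Letting $\eta\to0$ yields
\[
\Hau^s_{D^*_J,\infty}(E)\leq(1-\delta)\Hau^s_{D^*_J,\infty}(E).
\]
This forces $\Hau^s_{D^*_J,\infty}(E)=0$, and then \eqref{trivial} gives $\Hau^s_{D^*,\infty}(E)=0$, contradicting the reduction step.

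\textbf{Technical points to watch.} Two details need care.

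First, the near-optimal covers of $E\cap Q_i$ must consist of cubes in $D^*_J$. This is automatic from the definition of $\Hau^s_{D^*_J,\infty}$.

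Second, and this is the main obstacle, we need finiteness: $\Hau^s_{D^*_J,\infty}(E)<\infty$, which holds because $E$ is covered by finitely many cubes of $D^N_J$. Any cube in an optimal-type cover that is not contained in a single cube of the original family can be discarded or intersected, since dyadic cubes are either nested or disjoint. Compactness of $E$ is not even needed here, but positivity of the content via Lemma \ref{standardVSgeneral} is essential.
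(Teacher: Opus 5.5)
Your proposal is correct and follows essentially the same route as the paper. You assume the strict inequality for every $Q\in D^*_J$, take a near-optimal cover of $E$ by cubes of $D^*_J$, and use subadditivity (which you re-derive by hand by refining the cover inside each $Q_i$) to get $\Hau^s_{D^*_J,\infty}(E)\le(1-\delta)\Hau^s_{D^*_J,\infty}(E)$; this contradicts $0<\Hau^s_{D^*_J,\infty}(E)<\infty$, which follows from Lemma \ref{standardVSgeneral}, \eqref{trivial} and the boundedness of $E$. Your equivalence of \eqref{Locate} and \eqref{Locate'}, via Lemma \ref{equivalent} applied to $E\cap Q\subset Q\in D^*_J$, is also exactly the paper's argument; the maximal-disjoint reduction and the positivity of individual pieces in your first step are harmless but unnecessary.
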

\begin{proof}
  From Lemma \ref{standardVSgeneral} and (\ref{trivial}), $0< \Hau^s_{D^*,\,\infty}(E)\leq \Hau^s_{D^*_J,\,\infty}(E)$ for all $J$. Trivially, from the definition of the $s$-Hausdorff content and the compactness of $E$, $\Hau^s_{D^*_J,\,\infty}(E)<\infty$.
  
  We prove (\ref{Locate}) by getting a contradiction. Suppose not, then there is $\delta_0>0$ and $J_0$ so that for all $ Q\in D^*_{J_0}$ \begin{align}\label{pcontra}
 \Hau^s_{D^*_{J_0},\,\infty}(E\cap Q)< (1-\delta_0) \ell_{D^*}(Q)^s, \end{align}
    and remark that  for each sufficiently small $\varepsilon'>0$, there is a cover of $E$, $\{Q_j\}^\infty_{j=1}\subset D^*_{J_0}$, such that
    \begin{align*}
        \sum^\infty_{j=1}  \ell_{D^*}(Q_j)^s\leq \varepsilon'+\Hau^s_{D^*_{J_0},\,\infty}(E).
    \end{align*}
   Hence, by the subadditivity of the (general) $s$-dimensional Hausdorff content and (\ref{pcontra}),  \begin{align*}
0<\Hau^s_{D^*_{J_0},\,\infty}(E)&\leq\sum^\infty_{j=1} \Hau^s_{D^*_{J_0},\,\infty}(E\cap Q_j)\\&<(1-\delta_0)\sum^\infty_{j=1}  \ell_{D^*}(Q_j)^s\leq  (1-\delta_0)\left(\varepsilon'+\Hau^s_{D^*_{J_0},\,\infty}(E)\right
   )<\infty, \end{align*}
   which is a contradiction. Thus, we complete the proof of (\ref{Locate}).    
   
   By Lemma \ref{equivalent}, the equivalence of (\ref{Locate'}) is established and the proof is done.
\end{proof}

We need one more notation for the construction of the desired measure.\\ Let $Q\in D^N_j$ and write $Q=L_Q+(0,2^{-jN}]$, where $L_Q$ is the left-endpoint of $Q$. Define the rescaling operator $\mathbf{T}_Q:Q\to [0,1]$ associated with $Q$ by
\begin{align}\label{def:rescaling}
    \mathbf{T}_Q(\cdot):=2^{jN}(\cdot-L_Q).
\end{align}

\begin{prop}\label{constructESG}Let $N\geq1$ be an integer  and $1<A<B$. Let $0<\mathbf{t}<1$ and $J\geq0$ be given, then there exists $0<\varepsilon_N=\varepsilon_N(A,B,\mathbf{t})<1$ such that the following property holds: For each compact subset $E\subset[0,1]$ with large Hausdorff dimension $\HD(E)>1-\varepsilon_N$ and  there exists a probability measure $\mu$ supported on the closure of $\mathbf{T}_\mathbf{Q}(E\cap\mathbf{Q})$  for some $\mathbf{Q}\in D^*_J$ so that the energy condition
    \begin{align}\label{energy+spectral1}
       I_{\mathbf{t}}(\mu)\lesssim1
    \end{align}
    and the spectral gap condition
    \begin{align}\label{energy+spectral2}
        \int_{|\xi|\in [A^{1/4},B^2]}\left|\widehat{\mu}(\xi)\right|\,d\xi\lesssim A^{-4}.
    \end{align}
    are satisfied.
\end{prop}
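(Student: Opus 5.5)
We follow the Kuca--Orponen--Sahlsten construction as adapted by Bruce--Pramanik, using the dyadic structure $D^*[N]$. First we choose $N$ large in terms of $A,B,\mathbf t$; the precise relation is fixed at the end. We put $s=N(1-\varepsilon_N)$. By Lemma \ref{lemmaofLoca}, for a small $\delta>0$ to be chosen, we locate a cube $\mathbf Q\in D^N_j\subset D^*_J$ with $\Hau^s_{D^*,\infty}(E\cap\mathbf Q)\geq(1-\delta)\ell_{D^*}(\mathbf Q)^s$. We then rescale by $\mathbf T_{\mathbf Q}$. The map $\mathbf T_{\mathbf Q}$ sends the $D^*$-cubes inside $\mathbf Q$ to $D^*$-cubes inside $[0,1]$ and divides the $D^*$-lengths by $2^{-j}$. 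Hence $F:=\overline{\mathbf T_{\mathbf Q}(E\cap\mathbf Q)}$ satisfies $\Hau^s_{D^*,\infty}(F)\geq 1-\delta$. By Lemma \ref{Frostman2} there is a measure $\nu$ on $F$ with $\|\nu\|\geq1-\delta$ and $\nu(Q)\le\ell_{D^*}(Q)^s$ for all $Q\in D^*$. Since $[0,1)\in D^N_0$, also $\|\nu\|\le1$. We set $\mu=\nu/\|\nu\|$, which is a probability measure supported on $F$.

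\emph{Energy.} For every $Q\in D^*$ we have $\mu(Q)\le(1-\delta)^{-1}\ell_{D^*}(Q)^s$. Lemma \ref{controloferergy} with $L=(1-\delta)^{-1}$ gives $I_{\mathbf t}(\mu)\le 2\,\mathbf f(s-\mathbf t-N+1)$. Now $s-\mathbf t-N+1=1-\mathbf t-N\varepsilon_N$. If we impose $N\varepsilon_N\le(1-\mathbf t)/2$, then this quantity is at least $(1-\mathbf t)/2$. Because $\mathbf f$ is decreasing, $I_{\mathbf t}(\mu)\lesssim_{\mathbf t}1$, which is \eqref{energy+spectral1}.

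\emph{Spectral gap.} This is the main obstacle. The key structural fact is near-saturation. Take the generation $k$ with $2^{-kN}\approx B^{-2}$, so that $D^N_k$ consists of $2^{kN}$ intervals inside $[0,1)$. Each of them has $\mu$-mass at most $(1-\delta)^{-1}2^{-ks}$, and the masses sum to $1$. Then
\[
\sum_{Q\in D^N_k}\Big|\mu(Q)-2^{-kN}\Big|\lesssim \delta+\big(2^{kN(1-\varepsilon_N)}\cdot 2^{-kN}\cdot 2^{kN\varepsilon_N}-1\big),
\]
a quantity that is small when $\delta$ and $kN\varepsilon_N$ are small. This says $\mu$ is close, at scale $B^{-2}$, to Lebesgue measure on $[0,1]$ in a total-variation sense. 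Concretely, we will write $\mu=\lambda_k+(\mu-\lambda_k)$, where $\lambda_k=\sum_Q\mu(Q)\,2^{kN}\mathbf 1_Q$ is the conditional expectation of $\mu$ at generation $k$.

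On $|\xi|\le B^2\ll 2^{kN}$, the difference $\widehat{\mu}-\widehat{\lambda_k}$ is $O(2^{-kN}|\xi|)$, using the standard estimate comparing $\mu(Q)\delta$-averages to uniform averages over each $Q$. The piece $\widehat{\lambda_k}$ differs from $\widehat{\mathbf 1_{[0,1]}}$ by at most $\sum_Q|\mu(Q)-2^{-kN}|$ in sup norm. On $|\xi|\ge A^{1/4}$ we have $|\widehat{\mathbf 1_{[0,1]}}(\xi)|\lesssim|\xi|^{-1}$. The intermediate scales need care, so we also bring in a smooth cutoff to convert the $1/|\xi|$ tail into something integrable.

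Integrating over $A^{1/4}\le|\xi|\le B^2$ gives a bound of order $B^2\big(\delta+kN\varepsilon_N\big)+B^4 2^{-kN}+\log B\cdot(\text{tail})$. To make this $\lesssim A^{-4}$ we choose $k$ (i.e.\ $N$) so that $2^{-kN}\le A^{-4}B^{-4}$, then $\delta\le A^{-4}B^{-2}$, and finally $\varepsilon_N$ so small that $kN\varepsilon_N\le A^{-4}B^{-2}$ and $N\varepsilon_N\le(1-\mathbf t)/2$. The tail of $\widehat{\mathbf 1_{[0,1]}}$ is the delicate part. If the plain $1/|\xi|$ decay is not enough, we will instead compare with a smooth mollified density (as in \cite{MR4609784}), whose Fourier transform decays rapidly beyond $A^{1/4}$. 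This last comparison is exactly where the order of choosing the parameters must be handled carefully.
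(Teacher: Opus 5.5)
The energy estimate is fine and matches the paper's (Lemma \ref{controloferergy} with $N\varepsilon_N\le(1-\mathbf t)/2$). The spectral gap part, however, has a genuine gap. The issue is not the tail, which needs no more care: the measure you build, $\mu=\nu/\|\nu\|$, cannot satisfy \eqref{energy+spectral2} at all.

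Your own near-saturation argument gives
\[
\sup_{|\xi|\le B^2}\big|\widehat{\mu}(\xi)-\widehat{\mathbf{1}_{[0,1]}}(\xi)\big|\lesssim 2^{-kN}B^2+\delta+kN\varepsilon_N ,
\]
and with your parameter choices this is $\lesssim A^{-4}B^{-2}$. On the other hand $|\widehat{\mathbf{1}_{[0,1]}}(\xi)|=|\sin(\pi\xi)|/(\pi|\xi|)$, so
\[
\int_{A^{1/4}\le|\xi|\le B^2}\big|\widehat{\mathbf{1}_{[0,1]}}(\xi)\big|\,d\xi\gtrsim\log\big(B^2A^{-1/4}\big)\gtrsim \log A
\]
for large $A$. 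Hence $\int_{A^{1/4}\le|\xi|\le B^2}|\widehat{\mu}|\gtrsim\log A-O(A^{-4})$. The ``tail'' is the main term, not an error term.

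Your fallback of comparing with a smooth mollified density does not help. Your $\mu$ is genuinely close, at scale $B^{-2}$, to the flat profile with sharp edges at $0$ and $1$. For a mollifier $\psi$, the difference $\widehat{\mathbf{1}_{[0,1]}}(1-\widehat\psi)$ is not small beyond the mollification scale. No choice of comparison object can repair a measure whose Fourier transform tracks $\widehat{\mathbf{1}_{[0,1]}}$ on $[A^{1/4},B^2]$.

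The missing idea is to change the measure itself, redistributing its coarse-scale mass according to a smooth bump $\varphi$ on $[0,1]$ with $\int\varphi=1$. The paper does this as follows.
\begin{itemize}
\item It fixes $T=T(B)$, takes $\delta=2^{-NT-2}$ and $\varepsilon_N\le\log_2(1+2^{-NT-2})/(NT)$.
\item It shows that \emph{every} $T$-th child $\mathbf q$ of $\mathbf Q$ satisfies $\Hau^s_{D^*,\infty}(E\cap\mathbf q)\ge\tfrac12\ell_{D^*}(\mathbf q)^s$. This per-child lower bound is what your averaged total-variation estimate does not provide.
\item It rescales a Frostman measure on each $E\cap\mathbf q$ to have mass $\varphi(\mathbf q)\ell_{D^*}(\mathbf Q)^s$. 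The lower bound on each piece's mass, together with $\|\varphi\|_\infty=2$, keeps the Frostman constant at most $4$, so the energy bound survives.
\item After applying $\mathbf T_{\mathbf Q}$, one gets $\mu(\mathbf T_{\mathbf Q}(\mathbf q))=\int_{\mathbf T_{\mathbf Q}(\mathbf q)}\varphi$. Hence $|\widehat\mu-\widehat\varphi|\lesssim 2^{-T}|\xi|$, and the rapid decay of $\widehat\varphi$ beyond $A^{1/4}$ finishes the argument.
\end{itemize}

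Your averaged estimate could also be salvaged along these lines. Discard the generation-$k$ intervals with $\mu(Q)<\tfrac12 2^{-kN}$; their total $\varphi$-mass is $O(\delta+kN\varepsilon_N)$. Reweight the remaining ones by $\int_Q\varphi/\mu(Q)\le 4$. Some reweighting of this kind is indispensable.

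A minor point: $N$ is given in the statement, so you should not choose it large. The generation $k$ is the free parameter, and that is what your argument actually uses.
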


\begin{proof}
Let $T=T(B)\gg1$ be a sufficiently large constant depends on $B$ and we define \begin{align}\label{choiceofparameter}
 0<\varepsilon_N:=\min\left\{\frac{\log_2(1+2^{-NT-2})}{NT},\,\frac{1}{4N^2},\,\frac{1-\mathbf{t}}{2N}\right\}\leq\frac{1}{2}.
\end{align}
To establish the spectral gap condition, fix $\varphi:\mathbb{R}\to\mathbb{R}$ be a non-negative, smooth bump function supported on $[0,1]$ such that $\widehat{\varphi}(0)=1$ and  $\|\varphi\|_{L^\infty}=2$. Then
for all $M\geq1$,\begin{align*}
    \int_{|\xi|\geq A^{1/4}}|\widehat{\varphi}(\xi)|\,d\xi\leq C_M\int_{|\xi|\geq A^{1/4}}|\xi|^{-M}\,d\xi= C_M A^{\frac{1-M}{4}}.
\end{align*}
\smallskip

Let $E\subset[0,1]$ be a compact subset with large Hausdorff dimension $\HD(E)>1-\varepsilon_N$ and let $s=s(N):=N(1-\varepsilon_N)$. Then, from Lemma \ref{lemmaofLoca}, for $\delta=2^{-NT-2}$, there is $\mathbf{Q}\in D^*_J$ so that 
\begin{align}\label{firstchoice}
        \Hau^s_{D^*,\,\infty}(E\cap \mathbf{Q})\geq (1-\delta)\ell_{D^*}(\mathbf{Q})^s.
    \end{align}

Let $ch_T(\mathbf{Q})$ be the family of $T^{th}$-children $\mathbf{q}$ of $\mathbf{Q}$ with respect to the dyadic system $D^*=D^*[N]$; in other words,
$$ch_T(\mathbf{Q})=\left\{\mathbf{q}\in D^*:\,\mathbf{q}\subset\mathbf{Q},\,\ell_{D^*}(\mathbf{q})=2^{-T}\ell_{D^*}(\mathbf{Q})\right\}$$We claim that for each element $\mathbf{q}\in ch_T(\mathbf{Q})$, there holds that
\begin{align}\label{lower}
     \Hau^s_{D^*,\,\infty}(E\cap \mathbf{q})\geq \frac{1}{2}\ell_{D^*}(\mathbf{q})^s.
\end{align}
To verify this, let $\mathbf{G}$ be the set of all elements $\mathbf{q}$ in $ch_T(\mathbf{Q})$ such that (\ref{lower}) holds. Suppose not, then $\#\left(ch_T(\mathbf{Q})\setminus{\mathbf G}\right)\geq1$, and hence, by the subadditivity of the (general) $s$-dimensional Hausdorff content, \begin{align*}
 \Hau^s_{D^*,\,\infty}(E)&\leq\sum_{\mathbf{q}\in \mathbf{G}}  \Hau^s_{D^*,\,\infty}(E\cap\mathbf{q})+ \sum_{\mathbf{q}\in ch_T(\mathbf{Q})\setminus\mathbf{G}}  \frac{1}{2}\ell_{D^*}(\mathbf{q})^s\\
 &\leq\sum_{\mathbf{q}\in \mathbf{G}}  \ell_{D^*}(\mathbf{q})^s+ \sum_{\mathbf{q}\in ch_T(\mathbf{Q})\setminus\mathbf{G}}  \frac{1}{2}\ell_{D^*}(\mathbf{q})^s\\
 &=\sum_{\mathbf{q}\in ch_T(\mathbf{Q})}  \ell_{D^*}(\mathbf{q})^s-\sum_{\mathbf{q}\in ch_T(\mathbf{Q})\setminus\mathbf{G}}  \frac{1}{2}\ell_{D^*}(\mathbf{q})^s,
 \end{align*}
 which leads to, from the property  (\ref{firstchoice}) and the choice of $s=s(N)$,  \begin{align*}
     1-2^{-NT-2}\leq \frac{\Hau^s_{D^*}(E)}{\ell(\mathbf{Q})^s}&\leq \sum_{\mathbf{q}\in ch_T(\mathbf{Q})}  \frac{\ell_{D^*}(\mathbf{q})^s}{\ell_{D^*}(\mathbf{Q})^s}-\sum_{\mathbf{q}\in ch_T(\mathbf{Q})\setminus\mathbf{G}}  \left(\frac{1}{2}\cdot\frac{\ell_{D^*}(\mathbf{q})^s}{\ell_{D^*}(\mathbf{Q})^s}\right)\\
     &\leq 2^{NT-Ts}-2^{-Ts-1}\\
     &<2^{\varepsilon_N NT}-2^{-NT-1}\\
       &\leq 1+ 2^{-NT-2}-2^{-NT-1},
 \end{align*}
 which contradicts to the choice of $\varepsilon_N$ that $\varepsilon_N\leq\log_2(1+2^{-NT-2})/{NT}$. Hence, the claim (\ref{lower}) is obtained.

 Therefore, from Lemma \ref{Frostman2} and the claim (\ref{lower}), for each $\mathbf{q}\in ch_T(\mathbf{Q})$, there is a measure $\widetilde{\nu_\mathbf{q}}$ supported on $E\cap \mathbf{q}$ such that\begin{align}\label{lowerbound}
     \|\widetilde{\nu_\mathbf{q}}\|\geq\Hau^s_{D^*,\,\infty}(E\cap \mathbf{q})\geq\frac{1}{2}\ell_{D^*}(\mathbf{q})^s\quad\text{and}\quad \widetilde{\nu_\mathbf{q}}(Q)\leq \ell_{D^*}(Q)^s,\quad\forall\,Q\in D^*.
 \end{align}
For each $\mathbf{q}\in ch_T(\mathbf{Q})$, define the measure $\nu_\mathbf{q}$, which is supported on $\mathbf{q}\cap E$, by
 \begin{align*}
    \nu_\mathbf{q}(\cdot):=\frac{\varphi(\mathbf{q})\cdot\ell_{D^*}(\mathbf{Q})^s}{\widetilde{\nu_\mathbf{q}}(\mathbf{q})}\cdot\widetilde{\nu_\mathbf{q}}(\cdot),\quad\text{where}\quad\varphi(\mathbf{q}):=\int_{\mathbf{T}_\mathbf{Q}(\mathbf{q})}\varphi(x)\,dx
 \end{align*}
 and define the measure $\nu$ to be\begin{align*}
     \nu:=\sum_{\mathbf{q}\in ch_T(\mathbf{Q})}  \nu_\mathbf{q}.
 \end{align*}
 By the construction of the measure $\widetilde{\nu_\mathbf{q}}$, it is easy to see that the measure $\nu$ is supported on $E\cap\overline{\mathbf{Q}}$ and the total variation of $\nu$ is
 \begin{align*}
 \sum_{\mathbf{q}\in ch_T(\mathbf{Q})}  \|\nu_\mathbf{q}\|=\sum_{\mathbf{q}\in ch_T(\mathbf{Q})}  \varphi(\mathbf{q})  \cdot\ell_{D^*}(\mathbf{Q})^s=\widehat{\varphi}(0)\cdot \ell_{D^*}(\mathbf{Q})^s=   \ell_{D^*}(\mathbf{Q})^s;
 \end{align*}
 besides, it can be shown that the measure $\nu$ is well-defined, that is $\nu(\mathbf{Q})>0$.
 
 Now, we show that the measure $\nu$ obeys the Frostman condition on $D^*$, that is 
\begin{align}\label{ballfornu}
    \nu(Q)\leq 4\ell_{D^*}(Q)^s,\quad\forall\,Q\in D^*.
\end{align}
Let $Q\in D^*$ be given. If $\ell_{D^*}(Q)>\ell_{D^*}(\mathbf{Q})$, then $\nu(Q)$ is bounded by the total variation of $\nu$, which leads to that 
 \begin{align*}
\nu(Q)\leq\ell_{D^*}(\mathbf{Q})^s\leq\ell_{D^*}(Q)^s.
 \end{align*}
 To analyze the case $\ell_{D^*}(Q)\leq\ell_{D^*}(\mathbf{Q})$. We remark that, from the trivial estimate of $\varphi$, the property (\ref{def:rescaling}) and the choice of $s=s(N)$, we have \begin{align*}
\varphi(\mathbf{q})\cdot\ell_{D^*}(\mathbf{Q})^s\leq \|\varphi\|_{L^\infty}|\mathbf{T}_\mathbf{Q}(\mathbf{q})|\cdot\ell_{D^*}(\mathbf{Q})^s\leq 2\left(\frac{\ell_{D^*}(\mathbf{q})}{\ell_{D^*}(\mathbf{Q})}\right)^N\cdot\ell_{D^*}(\mathbf{Q})^s
\leq 2\ell_{D^*}(\mathbf{q})^s,
\end{align*}
and hence combines with (\ref{lowerbound}), the lower bound of the total variation of the measure $\widetilde{\nu_\mathbf{q}}$, implies that
\begin{align}\label{smallprop}
\varphi(\mathbf{q})\cdot\ell_{D^*}(\mathbf{Q})^s\leq 4\|\widetilde{\nu_\mathbf{q}}\|.
\end{align}
Hence, if $\ell_{D^*}(Q)\leq 2^{-T}\ell_{D^*}(\mathbf{Q})$, then there is at most one $\mathbf{q}\in ch_T(\mathbf{Q})$ such that $Q\subset\mathbf{q}$ and hence (\ref{smallprop}) and (\ref{lowerbound}), the Frostman condition of $\widetilde{\nu_\mathbf{q}}$, lead to
 \begin{align*}
\nu(Q)\leq\nu_{\mathbf{q}}(Q)=\frac{\varphi(\mathbf{q})\cdot\ell_{D^*}(\mathbf{Q})^s}{\widetilde{\nu_\mathbf{q}}(\mathbf{q})}\cdot\widetilde{\nu_\mathbf{q}}(Q)\leq 4\ell_{D^*}(Q)^s; 
 \end{align*}
while, if $2^{-T}\ell_{D^*}(\mathbf{Q})<\ell_{D^*}(Q)\leq \ell_{D^*}(\mathbf{Q})$, then, from the construction of $\nu_\mathbf{q}$,
 \begin{align*}
     \nu(Q)&\leq\#\{\mathbf{q}\in ch_T(\mathbf{Q}):\,q\cap Q\neq\emptyset\}\cdot\max_{\mathbf{q}\in ch_T(\mathbf{Q})}\|\nu_\mathbf{q}\|\\
     &\leq\left(\frac{\ell_{D^*}(Q)}{2^{-T}\ell_{D^*}(\mathbf{Q})}\right)^N\cdot\max_{\mathbf{q}\in ch_T(\mathbf{Q})}\left\{2\left(\frac{\ell_{D^*}(\mathbf{q})}{\ell_{D^*}(\mathbf{Q})}\right)^N\cdot\ell_{D^*}(\mathbf{Q})^s\right\}\\
     &\leq2\ell_{D^*}(Q)^s,
 \end{align*}
 therefore, we get the desired Frostman condition for $\nu$ on $D^*$, that is (\ref{ballfornu}).\\

 Now, we construct the desired measure $\mu$ by defining that
 \begin{align*}
     \mu(K):=\frac{\nu(\mathbf{T}^{-1}_\mathbf{Q}(K)\cap\mathbf{Q})}{\nu(\mathbf{Q})},\quad\forall\,K\subset\mathbb{R}:\text{Borel~measurable};
 \end{align*}
 it is easy to see that  $\mu$ is a probability measure and the support of $\mu$ is contained in the closure of $\mathbf{T}_\mathbf{Q}(\supp(\nu)\cap\mathbf{Q})$, which is a subset of $[0,1]$.\\
 We first show that the Frostman condition holds for $\mu$. For each $Q\in D^*$, from the construction of $\mu$ and the value of the total variation of $\nu$ and property (\ref{ballfornu}), we have
 \begin{align}\label{ballmu}
     \mu(Q):=\frac{\nu(\mathbf{T}^{-1}_\mathbf{Q}(Q)\cap\mathbf{Q})}{\nu(\mathbf{Q})}=\frac{\nu(\mathbf{T}^{-1}_\mathbf{Q}(Q))}{\|\nu\|}\leq\frac{4\ell_{D^*}(\mathbf{T}^{-1}_\mathbf{Q}(Q))^s}{ \ell_{D^*}(\mathbf{Q})^{s}}=4\ell_{D^*}(Q)^s,
 \end{align}
which gives the Frostman condition for $\mu$.\\ To obtain the energy condition (\ref{energy+spectral1}), we apply Lemma \ref{controloferergy} with the suitable choice of parameters. Let $L=4$, then Lemma \ref{controloferergy}, together with (\ref{ballmu}) and (\ref{choiceofparameter}), guarantee that \begin{align*}
    I_{\mathbf{t}}(\mu)\leq L\cdot\mathbf{f}(1-\mathbf{t}-\varepsilon_NN)<\infty,
\end{align*}
which verifies (\ref{energy+spectral1}) due to the requirement on $\varepsilon_N$ that $\varepsilon_N\leq (1-\mathbf{t})/{2N}$\\
Next, we check the spectral gap condition (\ref{energy+spectral2}) for $\mu$. For each $\mathbf{q}\in ch_T(\mathbf{Q})$, one has that
  \begin{align*}
\mu(\mathbf{T}_\mathbf{Q}(\mathbf{q}))=\frac{\nu_{\mathbf{q}}(\mathbf{q})}{\nu(\mathbf{Q})}=\frac{\varphi(\mathbf{q})\cdot\ell_{D^*}(\mathbf{Q})^s}{\nu(\mathbf{Q})}=\varphi(\mathbf{q}),
  \end{align*}
  which implies that
  \begin{align*}
      |\widehat{\mu}(\xi)-\widehat{\varphi}(\xi)|&\leq \sum_{\mathbf{q}\in ch_T(\mathbf{Q})}\left|\int_{\mathbf{T}_\mathbf{Q}(\mathbf{q})}e^{-2\pi i x\xi}\,d\mu(x)-\int_{\mathbf{T}_\mathbf{Q}(\mathbf{q})}e^{-2\pi i x\xi}\,d\varphi(x)\right|\\
      &\leq \sum_{\mathbf{q}\in ch_T(\mathbf{Q})}\int_{\mathbf{T}_\mathbf{Q}(\mathbf{q})}\left|e^{-2\pi i x\xi}-e^{-2\pi i c_{\mathbf{T}_\mathbf{Q}(\mathbf{q})}\xi}\right|\,d\mu(x)\\
      &+\sum_{\mathbf{q}\in ch_T(\mathbf{Q})}\int_{\mathbf{T}_\mathbf{Q}(\mathbf{q})}\left|e^{-2\pi i x\xi}-e^{-2\pi i c_{\mathbf{T}_\mathbf{Q}(\mathbf{q})}\xi}\right|\,d\varphi(x).
  \end{align*}
  Hence, if we apply the mean value theorem to the phase function, then
    \begin{align*}
      |\widehat{\mu}(\xi)-\widehat{\varphi}(\xi)|\lesssim 2^{-T}|\xi|.
  \end{align*}
To finish the proof, let $M,T\gg1$ , then there is $C_M>0$ such that
 \begin{align*}
   \int_{|\xi|\in [A^{1/4},B^2]}\left|\widehat{\mu}(\xi)\right|\,d\xi&\leq   \int_{|\xi|\in [A^{1/4},B^2]}\left|\widehat{\mu}(\xi)-\widehat{\varphi}(\xi)\right|\,d\xi+\int_{|\xi|\geq [A^{1/4}}\left|\widehat{\varphi}(\xi)\right|\,d\xi\\
   &\leq\int_{|\xi|\in [A^{1/4},B^2]}\left|\widehat{\mu}(\xi)-\widehat{\varphi}(\xi)\right|\,d\xi+ C_M A^{\frac{1-M}{4}}\\
   &\leq C\cdot2^{-T}B^4+ C_M A^{\frac{1-M}{4}}\\
   &\lesssim A^{-4},
 \end{align*}
 which verifies (\ref{energy+spectral2}) and the proof is complete.
    \end{proof}
\begin{remark}
    Compared to the S\'ark\"ozy-type problem, we remark that the parameter $N$ plays no role in proving the existence of the nonlinear pattern on the real line. In fact, the verification of Proposition \ref{constructESG} for $N=1$ is strong enough for us to show the main theorem. However, we record the Proposition \ref{constructESG} for general $N$ for future use.
\end{remark}

\section{Reduction scheme: the lower bound of the configuration integral and the proof of theorem \ref{mainthm}}\label{sec:5}
\subsection{Reduction scheme}\label{sec:5.1}
In this section, we decompose the configuration integral and show that for the measure that satisfies the energy and the spectral gap conditions, it satisfies the configuration integral condition (\ref{patterncondition}).

\begin{prop}\label{Lemmaofconfcheck}
Let $\gamma\in\Theta(1)$ be any given curve and $N\geq1$.
Let $\sigma_0>0$ and  $\kappa>0$ be the parameters given by the Corollary \ref{smoothingprop} and let $\sigma\in(0,\sigma_0)$. Then there exists constants $1<A<B$ such that for any probability measure  $\mu$  given by Proposition \ref{constructESG} with parameter $\mathbf{t}\in(1-\sigma,1)$, 
\begin{align}\label{configurationintegraldec}
\liminf_{\varepsilon\to0}\iint \mu_\varepsilon(x-t)\mu_\varepsilon(x-\gamma(t))\chi_{(\ell)}(t)\,dt\,d\mu(x)\gtrsim A^{-1},
   \end{align}
  for some sufficiently large $\ell\geq\ell_0$. 
\end{prop}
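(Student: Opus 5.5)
We follow the Kuca--Orponen--Sahlsten / Zhu scheme and split the configuration integral into a main term and error terms according to frequency. Write
\[
\Lambda_\varepsilon(f_1,f_2):=\iint f_1(x-t)f_2(x-\gamma(t))\chi_{(\ell)}(t)\,dt\,d\mu(x).
\]
Fix a Schwartz function $\psi$ with $\widehat\psi=1$ on $[-1,1]$ and supported in $[-2,2]$, and for a scale $R$ let $P_{\le R}f:=f*\psi_{1/R}$ be the projection to frequencies $\lesssim R$. We decompose
\[
\mu_\varepsilon=P_{\le A^{1/4}}\mu_\varepsilon+P_{A^{1/4}<\cdot\le B^2}\mu_\varepsilon+P_{>B^2}\mu_\varepsilon
=:\mu^{\rm low}+\mu^{\rm mid}+\mu^{\rm high},
\]
and expand $\Lambda_\varepsilon(\mu_\varepsilon,\mu_\varepsilon)$ trilinearly.

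\textbf{Step 1 (main term).} We expect $\Lambda(\mu^{\rm low},\mu^{\rm low})\gtrsim A^{-1}$. The function $\mu^{\rm low}$ is smooth at scale $A^{-1/4}$. Since $\gamma\in\Theta(1)$ gives $|\gamma(t)|\le|t|$ for small $t$, when $t\sim2^{-\ell}$ with $2^{-\ell}\ll A^{-1/4}$ both $x-t$ and $x-\gamma(t)$ lie within $2^{-\ell}$ of $x$. This gives
\[
\mu^{\rm low}(x-t)\mu^{\rm low}(x-\gamma(t))=\mu^{\rm low}(x)^2+O\!\left(2^{-\ell}A^{1/4}\|\mu^{\rm low}\|_\infty^{2}\right),
\]
with $\|\mu^{\rm low}\|_\infty\lesssim A^{1/4}$. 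The main contribution is then $2^{-\ell}\int(\mu^{\rm low})^2\,d\mu$. We bound this below using the spectral gap: $\mu*\psi_{A^{-1/4}}$ agrees with $\varphi$ up to frequency-localized errors. Because $\mu$ is close to the bump $\varphi$ at scales larger than $2^{-T}$, Proposition \ref{constructESG} gives $\mu(\mathbf{T}_\mathbf{Q}(\mathbf{q}))=\varphi(\mathbf{q})$. Hence $\mu^{\rm low}\approx\varphi$ up to $O(A^{-1})$ errors, and $\int\varphi\,d\mu\approx\int\varphi^2\gtrsim1$. After normalizing out the factor $2^{-\ell}$ (which is harmless, since we only need positivity at some fixed $\ell$, taken $\ell\sim\log A$), we get the bound $\gtrsim A^{-1}$.

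\textbf{Step 2 (middle-frequency terms).} Every term containing $\mu^{\rm mid}$ is estimated crudely by
\[
\|\widehat{\mu^{\rm mid}}\|_{L^1}\,\|\cdot\|_{L^1}\,\|\mu\|\lesssim A^{-4}\|\mu_\varepsilon\|_{L^1},
\]
using the spectral gap condition \eqref{energy+spectral2} together with $|\Lambda(f,g)|\le\|f\|_\infty\int|g(x-\gamma(t))|\chi_{(\ell)}\,dt\,d\mu$. The remaining integral is bounded by the energy or Frostman bound of $\mu$, costing at most a power $2^{C\ell}\lesssim A^{C'}$. This is where the relative sizes of the exponents ($A^{-4}$ against the main term $A^{-1}$) must be arranged, so we choose $\ell$ with $2^{\ell}\approx A^{1/4+\eta}$ small.

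\textbf{Step 3 (high-frequency terms).} Every term containing $\mu^{\rm high}$ is handled by Corollary \ref{smoothingprop}:
\[
|\Lambda(f_1,f_2)|\lesssim2^{\kappa\ell}\|f_1\|_{H^{-\sigma}}\|f_2\|_{H^{-\sigma}}\|\mu\|_{H^{-\sigma}}.
\]
Here $\|\mu^{\rm high}\|_{H^{-\sigma}}\lesssim B^{-\sigma'}\|\mu\|_{H^{-(1-\mathbf{t})/2}}$ for $\sigma'=\sigma-(1-\mathbf{t})/2>0$, which is positive because $\mathbf{t}>1-\sigma$. By Proposition \ref{EnergyandSobolev} and \eqref{energy+spectral1} the remaining $H^{-\sigma}$ norms are $O(1)$. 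The error is therefore $\lesssim2^{\kappa\ell}B^{-\sigma'}$, which we make at most $A^{-2}$ by taking $B\gg A^{C(\kappa)/\sigma'}$. All bounds are uniform in $\varepsilon$, so the $\liminf$ survives.

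\textbf{Main obstacle.} The delicate point is Step 1 combined with the bookkeeping of $\ell$. The main term carries a factor $2^{-\ell}$, while the smoothing loss $2^{\kappa\ell}$ must still be beaten by $B$. This forces the order of choices: first $A$, then $\ell$, then $B$, and finally $T$ and $\varepsilon_N$ in Proposition \ref{constructESG}. Along this chain we must check that the lower bound for $\int(\mu^{\rm low})^2\,d\mu$ is not destroyed by the Taylor error.
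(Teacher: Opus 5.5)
Your architecture matches the paper's. You use three frequency bands, the spectral gap for the middle band, Corollary \ref{smoothingprop} plus the energy bound for the top band, and choose parameters in the order $A,\ell\to B\to T,\varepsilon_N$. Your Step 3 is fine, and slightly cleaner than Proposition \ref{88888}, since the sharp cutoff at $B^2$ gives decay $B^{-\sigma'}$ uniformly in $\varepsilon$.

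The gap is in Step 1. You flag it but do not resolve it, and as written it does not close because it relies on two incompatible inputs.

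\emph{First input.} The lower bound cannot come from the spectral gap. Condition \eqref{energy+spectral2} says nothing about $|\xi|<A^{1/4}$, which is exactly where $\mu^{\mathrm{low}}$ lives. Your approximation $\mu^{\mathrm{low}}\approx\varphi$ actually rests on the identity $\mu(\mathbf{T}_\mathbf{Q}(\mathbf{q}))=\varphi(\mathbf{q})$, i.e.\ $|\widehat\mu(\xi)-\widehat\varphi(\xi)|\lesssim 2^{-T}|\xi|$. That identity comes from inside the proof of Proposition \ref{constructESG} and is not among its conclusions.

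\emph{Second input.} If you use only what is given, then, as you wrote, $\|\mu^{\mathrm{low}}\|_\infty\lesssim A^{1/4}$. Bernstein then gives $\|(\mu^{\mathrm{low}})'\|_\infty\lesssim A^{1/2}$, so the Taylor error is $O(2^{-\ell}A^{3/4})$, not $O(2^{-\ell}A^{1/4})$. With your choice $2^\ell\approx A^{1/4+\eta}$, $\eta$ small, this error is $A^{1/2-\eta}\gg 1$. It swamps the $O(1)$ quantity $\int(\mu^{\mathrm{low}})^2\,d\mu$ it is supposed to perturb. Your condition $2^{-\ell}\ll A^{-1/4}$ is adequate only when $\|\mu^{\mathrm{low}}\|_\infty=O(1)$, i.e.\ only under the first input.

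There is a smaller problem of the same kind in Step 2. Take the term with $\mu^{\mathrm{mid}}$ in the $t$-slot and $\mu^{\mathrm{high}}$ in the $\gamma$-slot. The sup--$L^1$ bound costs $1/\min_{I_\ell}|\gamma'|\simeq 2^{\kappa_0\ell}$, which is $2^{k\ell}$ for $t^k\log(1+t)$, and $A^{-4}$ need not beat that. The energy bound cannot help, because $\mu^{\mathrm{high}}$ is only uniformly bounded in $L^1$. Send that term through the smoothing inequality instead, as the paper does for $(\mathrm{II})_2$ and $(\mathrm{III})_1$.

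The paper avoids Step 1 entirely by never Taylor expanding. Its low part is $\mu_{1/A}=\mu*\phi_{1/A}$ with the nonnegative bump $\phi$, and it ties $A=2^{\ell-3}$ to $\ell$. Then on $[2^{-\ell},2^{1-\ell}]$, where $\chi_{(\ell)}=1$, one has $|t|,|\gamma(t)|\le 1/(4A)$. A Vitali covering argument shows that the set $\mathbf{D}_{c_0}$ of points with $\mu(B_r(x))\le r/20$ for some $r\le1$ has $\mu$-measure at most $\frac12$. Off this set, both $\mu_{1/A}(x-t)$ and $\mu_{1/A}(x-\gamma(t))$ exceed $c_0/8$. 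Hence the main term is $\gtrsim 2^{-\ell}\simeq A^{-1}$ for an arbitrary probability measure on $[0,1]$.

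The price is that the middle band $\mu_{1/B}-\mu_{1/A}$ is not frequency-localized. However, $|\widehat\phi(\xi/B)-\widehat\phi(\xi/A)|\lesssim|\xi|^2A^{-2}$ on $|\xi|\le A^{1/4}$, which keeps its Fourier $L^1$ norm $\lesssim A^{-5/4}$ (Lemma \ref{SPECTRALESTIMATE}). That is enough to beat $\|\widehat{\mu_{1/A}}\|_{L^1}\lesssim A$ in $(\mathrm{I})_1$.

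If you prefer to keep your sharp cutoffs, you can repair Step 1 without construction internals:
\begin{itemize}
\item Take $A^{3/4}\ll 2^\ell\le A$.
\item Choose $\psi$ with $0\le\widehat\psi\le1$.
\item Use $\int(\mu^{\mathrm{low}})^2\,d\mu\ge(\int\mu^{\mathrm{low}}\,d\mu)^2$. In the limit $\varepsilon\to0$, $\int\mu^{\mathrm{low}}\,d\mu=\int|\widehat\mu|^2\widehat\psi(A^{-1/4}\xi)\,d\xi\ge\int_{|\xi|\le(4\pi)^{-1}}|\widehat\mu|^2\,d\xi\gtrsim1$, because $|\widehat\mu(\xi)|\ge\frac12$ there for $\mu$ supported in $[0,1]$.
\end{itemize}
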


\subsubsection{Decomposition of the configuration integral}
Through this section, we fix a curve $\gamma\in\Theta(1)$.
We use high-low frequency analysis to decompose the configuration integral. For any $1<A<B$, we write
\begin{align}\label{decofmeasure}
\mu_\varepsilon=\mu_{1/A}+\left(\mu_{1/B}-\mu_{1/A}\right)+\left(\mu_{\varepsilon}-\mu_{1/B}\right),
\end{align}
then the configuration integral
\begin{align*}
\iint \mu_\varepsilon(x-t)\mu_\varepsilon(x-\gamma(t))\chi_{(\ell)}(t)\,dt\,d\mu(x)
   \end{align*}
can be decomposed into the sum of the main term 
\begin{align}\label{mainterm}
     \iint \mu_{1/A}(x-t)\mu_{1/A}(x-\gamma(t))\chi_{(\ell)}(t)\,dt\,d\mu(x)+,
\end{align}
and the error terms $(\rm I)+(\rm II)+(\rm III)+(\rm IV)$,
where 
\begin{align}\label{Errorterm1}
    (\rm I)&:= \iint\mu_{1/A}(x-t)\left(\mu_{1/B}-\mu_{1/A}\right)(x-\gamma(t))\chi_{(\ell)}(t)\,dt\,d\mu(x)\\
&\quad+\iint\mu_{1/A}(x-t)\left(\mu_{\varepsilon}-\mu_{1/B}\right)(x-\gamma(t))\chi_{(\ell)}(t)\,dt\,d\mu(x).\notag\\
&=(\rm I)_1+(\rm I)_2.\notag  
\end{align}
and
\begin{align}\label{Errorterm2}
     (\rm II)&:= \iint\left(\mu_{1/B}-\mu_{1/A}\right)(x-t)\left(\mu_{1/B}-\mu_{1/A}\right)(x-\gamma(t))\chi_{(\ell)}(t)\,dt\,d\mu(x)\\
&\quad+\iint\left(\mu_{\varepsilon}-\mu_{1/B}\right)(x-t)\left(\mu_{1/B}-\mu_{1/A}\right)(x-\gamma(t))\chi_{(\ell)}(t)\,dt\,d\mu(x),\notag\\
&=(\rm II)_1+(\rm II)_2.\notag
\end{align}
and
\begin{align}\label{Errorterm3}
     (\rm III)&:= \iint\left(\mu_{1/B}-\mu_{1/A}\right)(x-t)\left(\mu_{\varepsilon}-\mu_{1/B}\right)(x-\gamma(t))\chi_{(\ell)}(t)\,dt\,d\mu(x)\\
&\quad+\iint\left(\mu_{\varepsilon}-\mu_{1/B}\right)(x-t)\left(\mu_{\varepsilon}-\mu_{1/B}\right)(x-\gamma(t))\chi_{(\ell)}(t)\,dt\,d\mu(x),\notag\\
&=(\rm III)_1+(\rm III)_2.\notag
\end{align}
and
\begin{align}\label{Errorterm4}
   (\rm IV)&:=\iint \left(\mu_{1/B}-\mu_{1/A}\right)(x-t)\mu_{1/A}(x-\gamma(t))\chi_{(\ell)}(t)\,dt\,d\mu(x)\\
     &\quad+ \iint \left(\mu_{\varepsilon}-\mu_{1/B}\right)(x-t)\mu_{1/A}(x-\gamma(t))\chi_{(\ell)}(t)\,dt\,d\mu(x),\notag\\
     &=(\rm IV)_1+(\rm IV)_2.\notag
\end{align}

\subsubsection{Estimate on the main term}
In this section, we deal with the quantitative estimate of the main term.
\begin{lem}\label{LEMMAMAINTERM}
    Let $A=A(\ell)$ be a given large positive number given by $(4A)^{-1}=2^{1-\ell} $with the property that $ |\gamma(t)|\leq |t|$ for all $|t|\leq \frac{1}{4A}$. Then the main term (\ref{mainterm}) is bounded below by $\frac{1}{409600A}$.
\end{lem}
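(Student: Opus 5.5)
The approach is a pointwise lower bound on the mollified density $\mu_{1/A}=\mu*\phi_{1/A}$, combined with the fact that on the support of $\chi_{(\ell)}$ both shifts $t$ and $\gamma(t)$ are at most of size $1/(4A)$. Recall that $\phi\ge 1/2$ on $[-1/2,1/2]$, so $\phi_{1/A}(y)\ge A/2$ for $|y|\le 1/(2A)$. For any $z$ with $|z-x|\le 1/(4A)$ we then have
\[
\mu_{1/A}(z)=\int \phi_{1/A}(z-w)\,d\mu(w)\ \ge\ \frac{A}{2}\,\mu\bigl([x-\tfrac{1}{4A},x+\tfrac{1}{4A}]\bigr)=:\frac{A}{2}\,m(x),
\]
since $|z-w|\le |z-x|+|x-w|\le 1/(2A)$ whenever $w$ lies in that interval.

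The first step uses the scale choice. On $\supp\chi_{(\ell)}\subset[2^{-\ell-1},2^{-\ell+2}]$ we have $t\le 2^{2-\ell}$, and with $(4A)^{-1}=2^{1-\ell}$ this gives $t\lesssim 1/(4A)$. I would shrink to the region $t\in[2^{-\ell},2^{1-\ell}]$, where $\chi_{(\ell)}=1$ and $t\le 1/(4A)$. The hypothesis $|\gamma(t)|\le |t|$ then gives $|\gamma(t)|\le 1/(4A)$ as well. Since the integrand is nonnegative, the main term is at least
\[
\int \int_{2^{-\ell}}^{2^{1-\ell}} \frac{A^2}{4}\,m(x)^2\,dt\,d\mu(x)=\frac{A^2}{4}\cdot 2^{-\ell}\int m(x)^2\,d\mu(x)\ \gtrsim\ A\int m(x)^2\,d\mu(x),
\]
using $2^{-\ell}=1/(8A)$. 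This yields a bound of the form $\frac{A}{32}\int m^2\,d\mu$.

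The second step, which is the real obstacle, is to lower bound $\int m(x)^2\,d\mu(x)$ by $cA^{-2}$. I would partition $[0,1]$ (where $\mu$ lives) into about $4A$ intervals $I_k$ of length $1/(4A)$. For $x\in I_k$ the window $[x-\frac1{4A},x+\frac1{4A}]$ contains $I_k$, so $m(x)\ge \mu(I_k)$. Hence, by Cauchy–Schwarz over the $\lesssim 4A+1$ intervals,
\[
\int m^2\,d\mu\ \ge\ \sum_k \mu(I_k)^3\ \ge\ \frac{\bigl(\sum_k\mu(I_k)\bigr)^3}{(\#k)^2}\ \gtrsim\ \frac{1}{(4A+1)^2}.
\]
(This is Hölder with exponents $3$ and $3/2$.) Combining the two steps gives main term $\gtrsim A\cdot A^{-2}/C=1/(CA)$. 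I would then track constants: the factors $1/4$ from $\phi$, $1/8$ from the $t$-length, and $1/(5A)^2$ from the counting. These should combine to roughly $1/(409600A)$ after accounting for slack such as the $[0,1]$ endpoint and the number of intervals being $\le 5A$.

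No spectral gap or energy input is needed here; only positivity and total mass $1$ are used. The only delicate points are the bookkeeping to make $t$ and $\gamma(t)$ simultaneously within $1/(4A)$, and the precise numerical constant.
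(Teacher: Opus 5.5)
Your proof is correct, but its key step takes a different route from the paper's.

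\textbf{Shared first step.} Restrict to $t\in[2^{-\ell},2^{1-\ell}]$, where $\chi_{(\ell)}=1$ and $|t|,|\gamma(t)|\le\frac{1}{4A}$. There each of $\mu_{1/A}(x-t)$ and $\mu_{1/A}(x-\gamma(t))$ is at least $\frac{A}{2}\,\mu\bigl(B(x,\tfrac{1}{4A})\bigr)$. Both arguments tacitly use $\chi\ge 0$ to discard the rest of the $t$-integral.

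\textbf{Where the routes split.} The paper obtains a \emph{pointwise} lower bound on a large set. Via the Vitali covering lemma it shows that $\mathbf{D}_{c_0}$, the set of $x$ with $\mu(B_r(x))\le r/20$ for some $r\in(0,1]$, satisfies $\mu(\mathbf{D}_{c_0})\le\frac12$. Hence $\mu_{1/A}(x-t)$ and $\mu_{1/A}(x-\gamma(t))$ both exceed $c_0/8$ for all $x$ outside $\mathbf{D}_{c_0}$, a set of $\mu$-mass at least $\frac12$. You instead bound the single-scale quantity $\int m^2\,d\mu$ directly, using the partition into $4A=2^{\ell-1}$ intervals and H\"older: $\int m^2\,d\mu\ge\sum_k\mu(I_k)^3\ge(4A)^{-2}$.

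\textbf{What each route buys.} Yours is shorter and needs no covering lemma. It also avoids a small repair the paper's write-up needs: as defined, $\mathbf{D}_c$ contains every point at distance greater than $1$ from $\supp\mu$. One must therefore cover only $\mathbf{D}_c\cap\supp\mu$ before asserting that the disjoint balls lie in $[-2,2]$. Your route also gives a far better constant, namely exactly $\frac{A}{32}\cdot\frac{1}{16A^2}=\frac{1}{512A}$. So no ``slack'' bookkeeping is needed to reach $\frac{1}{409600A}$; that smaller constant is just the cost of $c_0=\frac{1}{20}$ and of discarding half the mass. What the paper's argument buys is a good set on which the lower density bound holds at all scales $r\le 1$ simultaneously, uniformly in $A$. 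That uniformity is not used in this lemma, since $A$ and $\ell$ are fixed.
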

\begin{proof}
    For each $c>0$, define 
    \begin{align*}
 \mathbf{D}_c:=\left\{x\in\mathbb{R}:\exists\,0<r_x\leq1\,s.t.\,\mu(B_{r_x}(x))\leq c r_x\right\},       
    \end{align*}
    then the set $\mathbf{D}_c$ can be cover by the union of $B_{r_x}(x)$ over all $x\in \mathbf{D}_c$. By the Vitali covering lemma \cite[Chapter 1]{MR0867284}, there exists a sequence of radius $\{r_j\}_j\subset\mathbb{R}_{+}$ and a family of points $\{x_j\}_j\subset \mathbf{D}_c$ such that
    \begin{align*}
\mathbf{D}_c\subset\bigcup^\infty_{j=1}B_{r_j}(x_j)\quad\text{and}\quad \left\{B_{r_j/5}(x_j)\right\}_j:\text{pairwise~disjoint}.        
    \end{align*}
Besides, it is easy to see that the union of $B_{r_j/5}(x_j)$ is contained in $[-2,2]$. Then, by the subadditivity of the measure, 
    \begin{align*}
        \mu(\mathbf{D}_c)\leq \sum^\infty_{j=1}\mu(B_{r_j}(x_j))\leq \sum^\infty_{j=1}cr_j=\frac{5c}{2}\sum^\infty_{j=1}\left|B_{r_j/5}(x_j)\right|\leq10c,\quad\forall\,c>0, 
    \end{align*}
    which implies that there is $c_0=\frac{1}{20}$ so that $\mu(\mathbf{D}_{c_0})\leq \frac{1}{2}$.

Let $x\notin \mathbf{D}_{c_0}$, then  $\mu(B_{r}(x))>c_0 r$ for all $0<r\leq1$ and hence for all $|t|\leq\frac{1}{4A}$,
\begin{align*}
    \mu_{1/A}(x-t)&=A\int\phi(A(x-t-y))\,d\mu(y)\\
    &\geq \frac{A}{2}\cdot \mu\left(B_{\frac{1}{2A}} (x-t)\right)\\
      &\geq \frac{A}{2}\cdot \mu\left(B_{\frac{1}{4A}} (x)\right)\\
      &>\frac{c_0}{8},
\end{align*}
in which the first inequality follows from the property of $\phi$ that $\phi(x)\geq\frac{1}{2}$ for all $|x|\leq\frac{1}{2}$. Similarly, from the  growing condition on $\gamma$ and the choice of $A$, one has
\begin{align*}
    \mu_{1/A}(x-\gamma(t))
  >\frac{c_0}{8},\quad\forall\,|t|\leq\frac{1}{4A},\,\,x\notin \mathbf{D}_{c_0}.
\end{align*}
Therefore, by the construction of $\chi$, 
\begin{align*}
   & \iint \mu_{1/A}(x-t)\mu_{1/A}(x-\gamma(t))\chi_{(\ell)}(t)\,dt\,d\mu(x)\\
   &\geq   \int_{(\mathbf{D}_{c_0})^c}\int_{|t|\leq\frac{1}{4A}} \mu_{1/A}(x-t)\mu_{1/A}(x-\gamma(t))\chi_{(\ell)}(t)\,dt\,d\mu(x)\\
    &\geq  \frac{c^2_0}{64} \cdot\mu((\mathbf{D}_{c_0})^c)\int_{|t|\leq\frac{1}{4A}} \chi_{(\ell)}(t)\,dt;
\end{align*}
and thus, by the choice of $A$, one has
\begin{align*}
   &\iint \mu_{1/A}(x-t)\mu_{1/A}(x-\gamma(t))\chi_{(\ell)}(t)\,dt\,d\mu(x)\\
    &\geq \frac{c^2_0}{128}\cdot \left|[0,\frac{1}{4A}]\cap[2^{-\ell},\,2^{-\ell+1}]\right|\\
    &\geq\frac{1}{409600A}, 
\end{align*} 
which completes the proof.
\end{proof}
\subsubsection{Estimate on the minor terms}
Before we start to estimate the minor terms, we need one more proposition, which indicates that the Sobolev norm of the high frequency part, $\mu_\varepsilon-\mu_{1/B}$, is negligible.\\
Recall that $\phi$ is a radial function, hence $\frac{\partial\widehat{\phi}}{\partial t}(0)=0$ and therefore for each $\xi$, there is $\xi'\in\mathbb{R}$ with $|\xi'|\leq|\xi|$ such that
    \begin{align}\label{meanvalue}
        \left|\widehat{\phi}(\xi)-\widehat{\phi}(0)\right|=|\xi|\left|\frac{\partial\widehat{\phi}}{\partial t}(\xi')-\frac{\partial\widehat{\phi}}{\partial t}(0)\right| \leq C_{\phi}\cdot|\xi|^2.
    \end{align}
   
\begin{prop}\label{88888}
 Let  $0<\varepsilon<1$ be given and $B>0$. For each $\sigma\in(0,1)$, if $1-\sigma<\mathbf{t}<1$, then
 \begin{align*}
     \|\mu_\varepsilon-\mu_{1/B}\|^2_{H^{-\sigma}}\lesssim \varepsilon^4B+B^{-3}+B^{\frac{1-\sigma-\mathbf{t}}{5}}I_\mathbf{t}(\mu),
 \end{align*}
 where the implicit constant depends on the function $\phi$.
\end{prop}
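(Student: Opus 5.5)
We work on the Fourier side. Since $\widehat{\mu_\varepsilon}(\xi)=\widehat{\mu}(\xi)\widehat{\phi}(\varepsilon\xi)$, we have
\[
\|\mu_\varepsilon-\mu_{1/B}\|^2_{H^{-\sigma}}=\int_{\R}|\widehat{\mu}(\xi)|^2\,\bigl|\widehat{\phi}(\varepsilon\xi)-\widehat{\phi}(\xi/B)\bigr|^2(1+|\xi|^2)^{-\sigma}\,d\xi .
\]
Since $\mu$ is a probability measure, $|\widehat\mu|\le 1$, and $|\widehat\phi|\le 1$. We split the frequency line into three regions: a low region $|\xi|\le B^{1/5}$, an intermediate region $B^{1/5}<|\xi|\le \varepsilon^{-1}$ (empty if $\varepsilon^{-1}\le B^{1/5}$), and a high region $|\xi|>\varepsilon^{-1}$. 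The exponent $1/5$ is chosen to produce the factor $B^{(1-\sigma-\mathbf t)/5}$ in the statement. In all regions we write $\widehat{\phi}(\varepsilon\xi)-\widehat{\phi}(\xi/B)=(\widehat{\phi}(\varepsilon\xi)-1)-(\widehat{\phi}(\xi/B)-1)$ when $\xi$ is small.

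\textbf{Low region.} Here we use (\ref{meanvalue}), which gives $|\widehat\phi(\varepsilon\xi)-1|\le C_\phi\varepsilon^2|\xi|^2$ and $|\widehat\phi(\xi/B)-1|\le C_\phi B^{-2}|\xi|^2$. Hence the integrand is at most $\lesssim(\varepsilon^4|\xi|^4+B^{-4}|\xi|^4)$. Integrating over $|\xi|\le B^{1/5}$ gives $\lesssim(\varepsilon^4+B^{-4})B$, which is $\lesssim\varepsilon^4B+B^{-3}$. These are exactly the first two terms of the bound. If needed, the region could be cut at $B^{1/5}$ with a crude power count instead; the target bounds leave enough slack for either choice.

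\textbf{Intermediate and high regions.} Here $|\xi|\ge B^{1/5}\ge1$, and we simply bound $|\widehat\phi(\varepsilon\xi)-\widehat\phi(\xi/B)|^2\le4$. It then suffices to control
\[
\int_{|\xi|\ge B^{1/5}}|\widehat\mu(\xi)|^2|\xi|^{-2\sigma}\,d\xi .
\]
We write $|\xi|^{-2\sigma}=|\xi|^{\mathbf t-1}\cdot|\xi|^{1-\mathbf t-2\sigma}$. The exponent $1-\mathbf t-2\sigma$ is negative because $\mathbf t>1-\sigma>1-2\sigma$, so on this region $|\xi|^{1-\mathbf t-2\sigma}\le B^{(1-\mathbf t-2\sigma)/5}\le B^{(1-\sigma-\mathbf t)/5}$. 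By Proposition \ref{EnergyandSobolev} with $d=1$ and $s=\mathbf t$, the remaining integral satisfies $\int|\widehat\mu|^2|\xi|^{\mathbf t-1}\,d\xi\simeq I_{\mathbf t}(\mu)$, which yields the third term. The high region needs no separate treatment, since the same bound covers it. The only care required is tracking the sign of the exponent, which is guaranteed by the hypothesis $1-\sigma<\mathbf t$.

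The main (mild) obstacle is bookkeeping in the low region: the quadratic vanishing of $\widehat\phi-1$ at the origin, coming from radiality, is essential there. A Lipschitz bound alone would only give $\varepsilon^2B^{3/5}$-type terms. Summing the three contributions gives the claimed estimate, with implicit constant depending only on $C_\phi$ and $\|\widehat\phi\|_\infty$.
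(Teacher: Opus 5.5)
Your proposal is correct and is essentially the paper's proof. The paper likewise splits at $|\xi|=B^{1/5}$, uses the quadratic vanishing \eqref{meanvalue} to get $(\varepsilon^2+B^{-2})^2B$ on the low part, and on the high part bounds the multiplier by $4$ and trades the weight for $|\xi|^{\mathbf t-1}$ to invoke Proposition \ref{EnergyandSobolev}; your extra split at $\varepsilon^{-1}$ is only cosmetic.

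One difference: you use the weight $(1+|\xi|^2)^{-\sigma}$ from \eqref{Sobolevdef}, while the paper's proof writes $(1+|\xi|^2)^{-\sigma/2}$. Your step $B^{(1-\mathbf t-2\sigma)/5}\le B^{(1-\sigma-\mathbf t)/5}$ tacitly needs $B\ge 1$, although the statement allows $B>0$. You can remove that restriction by bounding $(1+|\xi|^2)^{-\sigma}\le|\xi|^{-\sigma}$ directly.
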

\begin{proof}
    From (\ref{Sobolevdef}), we have   \begin{align*}
        \|\mu_{\varepsilon}-\mu_{1/B}\|^2_{H^{-\sigma}}&:=\int|\widehat{\mu}(\xi)|^2\cdot  \left|\widehat{\phi}(\varepsilon\xi)-\widehat{\phi}(\xi/B)\right|^2 \left(1+|\xi|^2\right)^{-\sigma/2}\,d\xi\\
        &= \int_{|\xi|\leq B^{1/5}}+\int_{|\xi|> B^{1/5}}.
    \end{align*}
    By the triangle inequality and the estimate (\ref{meanvalue}), the first integral can be dominated by
    \begin{align*}
  \int_{|\xi|\leq B^{1/5}}  \left|\widehat{\phi}(\varepsilon\xi)-\widehat{\phi}(\xi/B)\right|^2\,d\xi=&\int_{|\xi|\leq B^{1/5}}  \left|\widehat{\phi}(\varepsilon\xi)-\widehat{\phi}(0)+\widehat{\phi}(0)-\widehat{\phi}(\xi/B)\right|^2\,d\xi\\  &\leq C_{\phi}\cdot   \int_{|\xi|\leq B^{1/5}}  \left|\varepsilon^2+B^{-2}\right|^2\cdot|\xi|^4\,d\xi \\
   &=C_{\phi}\cdot\left|\varepsilon^2+B^{-2}\right|^2\cdot B;
    \end{align*}
  for the second integral, by the trivial estimate $\|\widehat{\phi}\|_{L^\infty}\lesssim1$ and Proposition \ref{EnergyandSobolev}, it is bounded by
    \begin{align*}
        4\int_{|\xi|> B^{1/5}}|\widehat{\mu}(\xi)|^2\left(1+|\xi|^2\right)^{-\sigma/2}\,d\xi &\leq 4 B^{\frac{1-\sigma-\mathbf{t}}{5}}\int_{|\xi|> B^{1/5}}|\widehat{\mu}(\xi)|^2|\xi|^{\mathbf{t}-1}\,d\xi\\
        &\lesssim B^{\frac{1-\sigma-\mathbf{t}}{5}}I_\mathbf{t}(\mu),
    \end{align*}
provided that $1-\sigma<\mathbf{t}<1$. The proof is complete.
\end{proof}
For the estimates of the minor terms, we will utilize the spectral-gap property of the measure $\mu$ to get an upper bound for each term.
\begin{lem}\label{SPECTRALESTIMATE}
   Let $N\geq1$ and given any constants $1<A<B$. Let $\mu$ be a probability measure that satisfies the spectral gap condition (\ref{energy+spectral2}). Then if $A$ is sufficiently large, the following estimates hold:
    \begin{align}\label{spectlemma1}
        \int\left|\widehat{\mu}(\xi)\right|\left|\widehat{\phi}(\xi/B)-\widehat{\phi}(\xi/A)\right|\,d\xi\lesssim A^{-5/4}
    \end{align}
    and for each $\sigma>0$
\begin{align}\label{spectlemma2}
        \|\mu_{1/B}-\mu_{1/A}\|^2_{H^{-\sigma}}\lesssim A^{-11/4},\end{align}where the implicit constants in (\ref{spectlemma1}) and (\ref{spectlemma2}) depends only on the function $\phi$.
\end{lem}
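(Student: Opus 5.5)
Both estimates rest on two facts: the multiplier $\widehat{\phi}(\xi/B)-\widehat{\phi}(\xi/A)$ is small both at low and at high frequency, and on the middle band the spectral gap condition (\ref{energy+spectral2}) controls $\widehat{\mu}$. I will split frequency space into three regions: $|\xi|\le A^{1/4}$, $A^{1/4}\le|\xi|\le B^2$, and $|\xi|\ge B^2$.

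\emph{Low frequencies.} For $|\xi|\le A^{1/4}$, the second-order Taylor bound (\ref{meanvalue}) applied twice gives
\[
|\widehat{\phi}(\xi/B)-\widehat{\phi}(\xi/A)|\le |\widehat{\phi}(\xi/B)-\widehat{\phi}(0)|+|\widehat{\phi}(0)-\widehat{\phi}(\xi/A)|\le C_\phi(B^{-2}+A^{-2})|\xi|^2\lesssim A^{-2}|\xi|^2 .
\]
Since $\mu$ is a probability measure, $|\widehat{\mu}|\le1$. The contribution of this region to (\ref{spectlemma1}) is therefore at most
\[
\int_{|\xi|\le A^{1/4}}A^{-2}|\xi|^2\,d\xi\lesssim A^{-2}A^{3/4}=A^{-5/4}.
\]

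\emph{Middle band.} For $A^{1/4}\le|\xi|\le B^2$, use $|\widehat{\phi}|\le1$, so the multiplier is bounded by $2$. The spectral gap condition then gives a contribution $\lesssim A^{-4}$.

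\emph{High frequencies.} For $|\xi|\ge B^2$, the Schwartz decay of $\widehat\phi$ gives
\[
|\widehat{\phi}(\xi/B)|+|\widehat{\phi}(\xi/A)|\lesssim_M (|\xi|/B)^{-M}.
\]
Integrating over $|\xi|\ge B^2$ yields $\lesssim B^{M}B^{2(1-M)}=B^{2-M}$, which is $\le B^{-3}\le A^{-3}$ once $M\ge5$. Summing the three regions proves (\ref{spectlemma1}) for large $A$.

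\emph{The Sobolev bound (\ref{spectlemma2}).} I will write
\[
\|\mu_{1/B}-\mu_{1/A}\|_{H^{-\sigma}}^2=\int|\widehat\mu(\xi)|^2|\widehat{\phi}(\xi/B)-\widehat{\phi}(\xi/A)|^2(1+|\xi|^2)^{-\sigma}\,d\xi
\]
and use the same three regions, dropping the weight $(1+|\xi|^2)^{-\sigma}\le1$.
\begin{itemize}
\item In the middle band and the high region, $|\widehat\mu|\le1$ and the multiplier is bounded. Hence $|\widehat\mu|^2|\cdot|^2\le 2|\widehat\mu|\,|\cdot|$ there, and these pieces are bounded by the corresponding pieces of (\ref{spectlemma1}): $O(A^{-4})$ and $O(B^{2-M})$.
\item In the low region the squared multiplier is $\lesssim A^{-4}|\xi|^4$. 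Integrating over $|\xi|\le A^{1/4}$ gives $A^{-4}A^{5/4}=A^{-11/4}$, which is the claimed rate.
\end{itemize}
So (\ref{spectlemma2}) follows, with constants depending only on $\phi$; $\sigma$ enters only through the harmless weight.

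The only delicate points are bookkeeping. The Taylor constant is the same $C_\phi$ from (\ref{meanvalue}), and radial symmetry of $\phi$ is what kills the first-order term. The Schwartz exponent $M$ must be taken large enough that the tail $B^{2-M}$ is dominated by $A^{-5/4}$ (respectively $A^{-11/4}$), which holds since $B>A$. There is no real obstacle: the exponents $5/4$ and $11/4$ are exactly the low-frequency Taylor contributions, and the middle band is absorbed by the $A^{-4}$ from the spectral gap.
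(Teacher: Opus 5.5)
Your proposal is correct and follows essentially the same route as the paper. Both split frequency space at $A^{1/4}$ and $B^2$, use the second-order Taylor bound \eqref{meanvalue} on the low region, the spectral gap on the middle band, and Schwartz decay on the high region. For the $H^{-\sigma}$ bound, both drop the weight. Your version additionally spells out the ``same decomposition'' step that the paper leaves implicit, including the $A^{-4}A^{5/4}=A^{-11/4}$ low-frequency computation.
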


\begin{proof}
    To apply the spectral gap property of the measure $\mu$ to estimate (\ref{spectlemma1}), we decomposition the integral into
    \begin{align*}
         \int\left|\widehat{\mu}(\xi)\right|\left|\widehat{\phi}(\xi/B)-\widehat{\phi}(\xi/A)\right|\,d\xi= \int_{|\xi|\leq A^{1/4}}+\int_{|\xi|\in[A^{1/4},B^2]}+\int_{|\xi|>B^2}.
    \end{align*}
    By the triangle inequality and spectral gap property (\ref{energy+spectral2}), the second integral is dominated by
    \begin{align*}
2\int_{|\xi|\in[A^{1/4},B^2]}\left|\widehat{\mu}(\xi)\right|\,d\xi\leq 2 A^{-4};
    \end{align*}
    for the first and the third integral, since the $L^\infty$-norm of $\widehat{\mu}$ is bounded by $1$, then
    \begin{align}\label{estimate1}
       &\int_{|\xi|\leq A^{1/4}}\left|\widehat{\mu}(\xi)\right|\left|\widehat{\phi}(\xi/B)-\widehat{\phi}(\xi/A)\right|\,d\xi+\int_{|\xi|>B^2} \left|\widehat{\mu}(\xi)\right|\left|\widehat{\phi}(\xi/B)-\widehat{\phi}(\xi/A)\right|\,d\xi\notag\\
       &\leq \int_{|\xi|\leq A^{1/4}}\left|\widehat{\phi}(\xi/B)-\widehat{\phi}(\xi/A)\right|\,d\xi+\int_{|\xi|>B^2} \left|\widehat{\phi}(\xi/B)-\widehat{\phi}(\xi/A)\right|\,d\xi\notag\\
      &\leq \int_{|\xi|\leq A^{1/4}}\left|\widehat{\phi}(\xi/B)-\widehat{\phi}(\xi/A)\right|\,d\xi+\int_{|\xi|>B^2} \left|\widehat{\phi}(\xi/B)\right|\,d\xi+\int_{|\xi|>B^2} \left|\widehat{\phi}(\xi/A)\right|\,d\xi .
    \end{align}
   Hence, by applying (\ref{meanvalue}) to the former in (\ref{estimate1}) and the Schwartz decay estimate, $|\widehat{\phi}(\xi)|\leq C_{\phi}|\xi|^{-5}$, to the latter in (\ref{estimate1}), it is further bounded by
    \begin{align*}
 A^{-2} \int_{|\xi|\leq A^{1/4}}\left|\xi\right|^2\,d\xi+B^{-3}\lesssim A^{-5/4},
    \end{align*}
     which completes (\ref{spectlemma1}).

    Next, we estimate (\ref{spectlemma2}). Since $\sigma>0$, then 
    \begin{align*}
        \|\mu_{1/B}-\mu_{1/A}\|^2_{H^{-\sigma}}&:=\int|\widehat{\mu}(\xi)|^2\cdot  \left|\widehat{\phi}(\xi/B)-\widehat{\phi}(\xi/A)\right|^2 \left(1+|\xi|^2\right)^{-\sigma/2}\,d\xi\\
       &\leq \int|\widehat{\mu}(\xi)|^2\cdot  \left|\widehat{\phi}(\xi/B)-\widehat{\phi}(\xi/A)\right|^2 \,d\xi.
    \end{align*}
 By the same decomposition, we can deduce that $ \|\mu_{1/B}-\mu_{1/A}\|^2_{H^{-\sigma}}$ is dominated by $A^{-11/4}$.  
\end{proof}

Now, we are ready to estimate the minor terms. We first demonstrate the estimates on the terms $(\rm I)_1$ and $(\rm I)_2$.
\begin{lem}\label{ERI}
    Let $N\geq1$ and given any constants $1<A<B$, let $\mu$ be a probability measure satisfies the spectral gap condition (\ref{energy+spectral2}), then
     \begin{align*}
          | (\rm I)_1|\lesssim 2^{-\ell} A^{-1/4},
     \end{align*}
     where the implicit constant depends on the $L^1$-norm of $\chi$ and depends on the function $\phi$.
\end{lem}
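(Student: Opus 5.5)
We bound $(\rm I)_1$ directly by moving the difference $\mu_{1/B}-\mu_{1/A}$ to the Fourier side and using the spectral gap estimate (\ref{spectlemma1}) of Lemma \ref{SPECTRALESTIMATE}. The smoothing inequality is not needed here; only the $L^\infty$ bound of the smooth factor $\mu_{1/A}$ and the $L^\infty$ bound of the difference, which in turn is controlled by the $L^1$ norm of its Fourier transform.

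First, since $\mu$ is a probability measure and $\mu_{1/A}=\mu*\phi_{1/A}$, we only have the trivial bound $\|\mu_{1/A}\|_{L^\infty}\le A\|\phi\|_{L^\infty}$. This costs a factor $A$, which would ruin the estimate. So instead we keep $\mu_{1/A}(x-t)$ inside and put the sup-norm on the difference factor. By Fourier inversion,
\[
\left(\mu_{1/B}-\mu_{1/A}\right)(y)=\int \widehat{\mu}(\xi)\left(\widehat{\phi}(\xi/B)-\widehat{\phi}(\xi/A)\right)e^{2\pi i y\xi}\,d\xi ,
\]
so $\|\mu_{1/B}-\mu_{1/A}\|_{L^\infty}\le \int|\widehat{\mu}(\xi)||\widehat{\phi}(\xi/B)-\widehat{\phi}(\xi/A)|\,d\xi\lesssim A^{-5/4}$ by (\ref{spectlemma1}). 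Then
\[
|(\rm I)_1|\le A^{-5/4}\iint \mu_{1/A}(x-t)\,\chi_{(\ell)}(t)\,dt\,d\mu(x).
\]

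Second, we bound the remaining integral. For fixed $x$ we have $\int\mu_{1/A}(x-t)\chi_{(\ell)}(t)\,dt\le \|\mu_{1/A}\|_{L^\infty}\|\chi_{(\ell)}\|_{L^1}\lesssim A\cdot 2^{-\ell}\|\chi\|_{L^1}$, since $\|\chi_{(\ell)}\|_{L^1}=2^{-\ell}\|\chi\|_{L^1}$. Integrating against the probability measure $\mu$ gives
\[
|(\rm I)_1|\lesssim A^{-5/4}\cdot A\cdot 2^{-\ell}\|\chi\|_{L^1}=2^{-\ell}A^{-1/4},
\]
with constants depending on $\|\chi\|_{L^1}$ and $\phi$, as claimed.

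The only delicate point is the order of estimation. The sup-norm must go on the difference term, where the spectral gap gives the gain $A^{-5/4}$. The loss of $A$ from $\|\mu_{1/A}\|_{L^\infty}$ is then absorbed, leaving the net $A^{-1/4}$. The factor $2^{-\ell}$ comes for free from the $L^1$ size of the $t$-cutoff. An alternative, integrating $\mu_{1/A}(x-t)$ in $x$ first via Fubini, would require $\mu$ to be absolutely continuous, so we avoid it.
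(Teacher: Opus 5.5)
Your proof is correct and is essentially the paper's argument written in physical space. The paper expands $(\rm I)_1$ as a Fourier integral, bounds $|\widehat{\mu}(\xi+\eta)|\le 1$ and the $t$-integral by $2^{-\ell}\|\chi\|_{L^1}$, and then uses $\int|\widehat{\phi}(\eta/A)|\,d\eta\lesssim A$ together with (\ref{spectlemma1}); this is the same bookkeeping as your bounds $\|\mu_{1/A}\|_{L^\infty}\le A\|\phi\|_{L^\infty}$ and $\|\mu_{1/B}-\mu_{1/A}\|_{L^\infty}\lesssim A^{-5/4}$. One cosmetic fix: write $|\chi_{(\ell)}|$ in your intermediate integral, since $\chi$ is not assumed nonnegative.
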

\begin{proof}
By the Fourier representation and the change of variable, we have
    \begin{align*}
          (\rm I)_1&= \iint\mu_{1/A}(x-t)\left(\mu_{1/B}-\mu_{1/A}\right)(x-\gamma(t))\chi_{(\ell)}(t)\,dt\,d\mu(x)\\
&=2^{-\ell}\iint\widehat{\mu}(\xi+\eta)\overline{\widehat{\mu_{1/A}}}(\eta)\left(\overline{\widehat{\mu_{1/B}}}-\overline{\widehat{\mu_{1/A}}}\right)(\xi)\int e^{-2\pi i(t\eta+\gamma(2^{-\ell}t)\xi)}\chi(t)\,dt\,d\xi\,d\eta,
    \end{align*}
    and hence, from the fact that $\|\widehat{\mu}\|_{L^\infty}\leq1$, one has
    \begin{align}\label{nocurveinvolve}
      | (\rm I)_1|&\leq   2^{-\ell}\|\chi\|_{L^1}\iint\left|\overline{\widehat{\mu_{1/A}}}(\eta)\right|\left|\left(\overline{\widehat{\mu_{1/B}}}-\overline{\widehat{\mu_{1/A}}}\right)(\xi)\right|\,d\xi\,d\eta\notag\\
      &\simeq 2^{-\ell}\int\left|\widehat{\mu}(\eta)\right|\left|\widehat{\phi}(\eta/A)\right|\,d\eta\cdot\int\left|\widehat{\mu}(\xi)\right|\left|\widehat{\phi}(\xi/B)-\widehat{\phi}(\xi/A)\right|\,d\xi.\notag\\
      &\leq 2^{-\ell}\int\left|\widehat{\phi}(\eta/A)\right|\,d\eta\cdot\int\left|\widehat{\mu}(\xi)\right|\left|\widehat{\phi}(\xi/B)-\widehat{\phi}(\xi/A)\right|\,d\xi.
    \end{align}
    To get the upper bound of (\ref{nocurveinvolve}), since that $\widehat{\phi}$ is a Schwartz function on $\mathbb{R}$, then, by the change of variable, the $L^1$-norm of $\widehat{\phi}(\cdot/A)$ is dominated by $A$; combines with (\ref{spectlemma1}) in Lemma \ref{SPECTRALESTIMATE} and (\ref{nocurveinvolve}), the proof is complete.
\end{proof}

\begin{lem}\label{ERII}
   Let $N\geq1$ and given any constants $1<A<B$. Let $\sigma_0>0$ and  $\kappa>0$ be the parameters given by the Corollary \ref{smoothingprop} and let $\sigma\in(0,\sigma_0)$. Then for each probability measure $\mu$ satisfies the spectral gap condition (\ref{energy+spectral2}), 
     \begin{align*}
          | (\rm I)_2|\lesssim 2^{\kappa\ell} \cdot I_{1-\sigma}(\mu)\cdot \left(\varepsilon^4B+B^{-3}+B^{\frac{1-\mathbf{t}-\sigma}{5}}I_\mathbf{t}(\mu)\right)^{1/2},
     \end{align*}
     for all $\ell\in\N$ and $1-\sigma<\mathbf{t}<1$ and the implicit constant depends on the function $\phi$.
     \end{lem}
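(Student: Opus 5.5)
The plan is to apply Corollary \ref{smoothingprop} directly to $(\rm I)_2$ with $f_1=\mu_{1/A}$, $f_2=\mu_\varepsilon-\mu_{1/B}$, and the measure $\mu$ in the $x$-variable. Both $f_1$ and $f_2$ are Schwartz functions, since they are convolutions of a compactly supported finite measure with Schwartz bumps, so the corollary applies. It gives
\begin{align*}
|(\rm I)_2|\lesssim 2^{\kappa\ell}\,\|\mu_{1/A}\|_{H^{-\sigma}}\,\|\mu_\varepsilon-\mu_{1/B}\|_{H^{-\sigma}}\,\|\mu\|_{H^{-\sigma}}.
\end{align*}

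The two "low-frequency-type" factors are handled first. Since $|\widehat{\phi}|\le \widehat{\phi}(0)=1$ ($\phi\ge0$), we have $\|\mu_{1/A}\|_{H^{-\sigma}}\le\|\mu\|_{H^{-\sigma}}$. By Proposition \ref{EnergyandSobolev} with $d=1$ and $s=1-\sigma$,
\begin{align*}
\|\mu\|^2_{H^{-\sigma}}\simeq\int|\widehat\mu(\xi)|^2(1+|\xi|^2)^{-\sigma}\,d\xi\lesssim \int|\widehat\mu(\xi)|^2|\xi|^{-\sigma}\,d\xi\simeq I_{1-\sigma}(\mu).
\end{align*}
This uses $(1+|\xi|^2)^{-\sigma}\le|\xi|^{-2\sigma}\le |\xi|^{-\sigma}$ for $|\xi|\ge1$. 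On $|\xi|\le1$ we use $|\widehat\mu|\le1$ together with $|\xi|^{-\sigma}\ge1$, so the bound still holds. Hence
\begin{align*}
\|\mu_{1/A}\|_{H^{-\sigma}}\,\|\mu\|_{H^{-\sigma}}\lesssim I_{1-\sigma}(\mu).
\end{align*}
Some care is needed to match the $(1+|\xi|^2)^{-\sigma/2}$ versus $(1+|\xi|^2)^{-\sigma}$ normalization used in Proposition \ref{88888}. Either convention is dominated by the $|\xi|^{-\sigma}$ weight up to a constant, so the inequality survives in both.

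The remaining factor is the high-frequency piece, and here Proposition \ref{88888} applies directly:
\begin{align*}
\|\mu_\varepsilon-\mu_{1/B}\|_{H^{-\sigma}}\lesssim\bigl(\varepsilon^4B+B^{-3}+B^{\frac{1-\sigma-\mathbf t}{5}}I_{\mathbf t}(\mu)\bigr)^{1/2},
\end{align*}
valid for $1-\sigma<\mathbf t<1$. Multiplying the three bounds gives the claimed estimate. Note that the spectral gap hypothesis is not actually needed for this term; only the smoothing inequality and the energy are used.

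The only delicate point is justifying that the corollary's constant is uniform in $\varepsilon$. If one prefers not to rely on the Schwartz hypothesis for $\mu_\varepsilon$, one approximates by Schwartz functions in $H^{-\sigma}$ and uses density, exactly as in the proof of Theorem \ref{smoothing}. Everything else is bookkeeping of Sobolev weights.
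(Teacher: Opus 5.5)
Your proposal is correct and is essentially the paper's proof: apply Corollary \ref{smoothingprop} with $f_1=\mu_{1/A}$ and $f_2=\mu_\varepsilon-\mu_{1/B}$, bound $\|\mu_{1/A}\|_{H^{-\sigma}}\|\mu\|_{H^{-\sigma}}$ by $I_{1-\sigma}(\mu)$ via Proposition \ref{EnergyandSobolev}, and bound the high-frequency factor by Proposition \ref{88888}. The details you supply are all accurate and are left implicit in the paper: $|\widehat{\phi}|\le 1$ gives $\|\mu_{1/A}\|_{H^{-\sigma}}\le\|\mu\|_{H^{-\sigma}}$, both Sobolev weight conventions are pointwise dominated by $|\xi|^{-\sigma}$, and the spectral gap hypothesis is not used for this term.
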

\begin{proof}
    By the Corollary \ref{smoothingprop}, we have
    \begin{align*}
     \left|(\rm I)_2\right|\lesssim2^{\kappa\ell}\cdot\|\mu_{1/A}\|_{H^{-\sigma}}\cdot\|\mu_\varepsilon-\mu_{1/B}\|_{H^{-\sigma}}\cdot\|\mu\|_{H^{-\sigma}}.
\end{align*}
From Proposition \ref{EnergyandSobolev} and Proposition \ref{88888}, we further have that
    \begin{align*}
     \left|(\rm I)_2\right|&\lesssim_{\gamma}2^{\kappa\ell}\cdot I_{1-\sigma}(\mu)\cdot\|\mu_\varepsilon-\mu_{1/B}\|_{H^{-\sigma}}\\
     &\lesssim2^{\kappa\ell}\cdot I_{1-\sigma}(\mu)\cdot \left(\varepsilon^4B+B^{-3}+B^{\frac{1-\mathbf{t}-\sigma}{5}}I_\mathbf{t}(\mu)\right)^{1/2}, \end{align*}
    for all $1-\sigma<\mathbf{t}<1$.
\end{proof}

The rest of this section contributes to the estimate of the remaining minor terms. These estimates are similar to those in Lemmas \ref{ERI} and \ref{ERII}.

\begin{lem}\label{ERIII}
    Let $N\geq1$ and given any constants $1<A<B$. Then for each probability measure $\mu$ satisfies the spectral gap condition (\ref{energy+spectral2}), there holds 
     \begin{align*}
         \left|({\rm II})_1\right|\lesssim 2^{-\ell} A^{-5/2}\quad\text{and}\quad \left|({\rm IV})_1\right|\leq 2^{-\ell} A^{-1/4},
     \end{align*}
    where the implicit constant depends on the $L^1$-norm of $\chi$ and depends on the function $\phi$ for all sufficiently large $\ell>0$.
     Besides, let $\sigma_0>0$ and  $\kappa>0$ be the parameters given by the Corollary \ref{smoothingprop} and let $\sigma\in(0,\sigma_0)$, then for all sufficiently large $\ell$ and
     $1-\sigma<\mathbf{t}<1$\begin{align*}
          | (\rm II)_2|,\,| (\rm III)_1|\lesssim 2^{\kappa\ell}\cdot A^{-11/8}\cdot I^{1/2}_{1-\sigma}(\mu)\cdot \left(\varepsilon^4B+B^{-3}+B^{\frac{1-\mathbf{t}-\sigma}{5}}I_\mathbf{t}(\mu)\right)^{1/2} 
     \end{align*}
     and 
     \begin{align*}
| (\rm III)_2|\lesssim 2^{\kappa\ell}\cdot I^{1/2}_{1-\sigma}(\mu)\cdot \left(\varepsilon^4B+B^{-3}+B^{\frac{1-\mathbf{t}-\sigma}{5}}I_\mathbf{t}(\mu)\right),
     \end{align*}
     and
      \begin{align*}
| (\rm IV)_2|\lesssim 2^{\kappa\ell}\cdot I_{1-\sigma}(\mu)\cdot \left(\varepsilon^4B+B^{-3}+B^{\frac{1-\mathbf{t}-\sigma}{5}}I_\mathbf{t}(\mu)\right)^{1/2},
     \end{align*}
     where the implicit constants depend on $\phi$.
\end{lem}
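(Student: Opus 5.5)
The plan is to treat the six bounds in two groups, exactly as in Lemmas \ref{ERI} and \ref{ERII}. Terms with no factor $\mu_\varepsilon-\mu_{1/B}$ will be handled by the Fourier representation and the spectral gap. Terms containing $\mu_\varepsilon-\mu_{1/B}$ will be handled by Corollary \ref{smoothingprop} together with Proposition \ref{88888}.

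\emph{The terms $({\rm II})_1$ and $({\rm IV})_1$.} As in the proof of Lemma \ref{ERI}, I will write each term in Fourier variables. After rescaling $t\mapsto 2^{-\ell}t$, this gives
\[
2^{-\ell}\iint \widehat{\mu}(\xi+\eta)\,\overline{\widehat{g_1}}(\eta)\,\overline{\widehat{g_2}}(\xi)\int e^{-2\pi i(t\eta+\gamma(2^{-\ell}t)\xi)}\chi(t)\,dt\,d\xi\,d\eta .
\]
Using $\|\widehat{\mu}\|_{L^\infty}\le 1$ and bounding the inner $t$-integral by $\|\chi\|_{L^1}$, the term is controlled by $2^{-\ell}\|\widehat{g_1}\|_{L^1}\|\widehat{g_2}\|_{L^1}$.
\begin{itemize}
\item For $({\rm II})_1$, take $g_1=g_2=\mu_{1/B}-\mu_{1/A}$. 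Each factor is then $\int|\widehat{\mu}||\widehat{\phi}(\cdot/B)-\widehat{\phi}(\cdot/A)|$, which is $\lesssim A^{-5/4}$ by (\ref{spectlemma1}). The product gives $2^{-\ell}A^{-5/2}$.
\item For $({\rm IV})_1$, take $g_1=\mu_{1/B}-\mu_{1/A}$ and $g_2=\mu_{1/A}$. This is symmetric to $({\rm I})_1$: one factor is bounded by $\|\widehat{\phi}(\cdot/A)\|_{L^1}\lesssim A$, the other by $A^{-5/4}$. The result is $2^{-\ell}A^{-1/4}$.
\end{itemize}

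\emph{The terms with a high-frequency factor.} For $({\rm II})_2$, $({\rm III})_1$, $({\rm III})_2$ and $({\rm IV})_2$, I will apply Corollary \ref{smoothingprop} with $f_1,f_2$ equal to the two mollified pieces and $\mu$ as the measure. This bounds each term by
\[
2^{\kappa\ell}\,\|f_1\|_{H^{-\sigma}}\|f_2\|_{H^{-\sigma}}\|\mu\|_{H^{-\sigma}}.
\]
The three norms are then estimated as follows.
\begin{itemize}
\item $\|\mu\|_{H^{-\sigma}}\lesssim I_{1-\sigma}(\mu)^{1/2}$, by Proposition \ref{EnergyandSobolev} with $d=1$, since $-\sigma=(s-d)/2$ with $s=1-2\sigma$ or a comparable exponent.
\item $\|\mu_{1/A}\|_{H^{-\sigma}}\le \|\mu\|_{H^{-\sigma}}$, because $|\widehat{\phi}|\le 1$.
\item $\|\mu_{1/B}-\mu_{1/A}\|_{H^{-\sigma}}\lesssim A^{-11/8}$, by (\ref{spectlemma2}).
\item $\|\mu_\varepsilon-\mu_{1/B}\|_{H^{-\sigma}}\lesssim(\varepsilon^4B+B^{-3}+B^{(1-\mathbf{t}-\sigma)/5}I_\mathbf{t}(\mu))^{1/2}$, by Proposition \ref{88888}.
\end{itemize}
Multiplying these gives each stated bound:
\begin{itemize}
\item $({\rm II})_2$ and $({\rm III})_1$: $A^{-11/8}$ times $I_{1-\sigma}^{1/2}$ times the square root of the bracket.
\item $({\rm III})_2$: the full bracket (the square root squared) times $I_{1-\sigma}^{1/2}$.
\item $({\rm IV})_2$: $I_{1-\sigma}$, from the two factors $\|\mu_{1/A}\|_{H^{-\sigma}}\|\mu\|_{H^{-\sigma}}$, times the square root of the bracket.
\end{itemize}

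\emph{Technical point.} Corollary \ref{smoothingprop} is stated for $f_j\in\mathcal{S}(\mathbb{R})$. The mollified measures $\mu_\varepsilon=\mu*\phi_\varepsilon$ are only smooth and compactly supported, so I will justify applying the corollary by density of Schwartz functions in $H^{-\sigma}$, as is already done implicitly in Lemma \ref{ERII}.

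The step needing the most care is the bookkeeping of exponents. In particular, I must check that the Sobolev index in Proposition \ref{EnergyandSobolev} matches $I_{1-\sigma}$ up to harmless constants. If it does not match exactly, I would replace $\sigma$ by $\sigma/2$ and use that $I_{1-\sigma}$ dominates $I_{1-2\sigma}$ for probability measures on $[0,1]$. Apart from this, the argument is a direct repetition of Lemmas \ref{ERI} and \ref{ERII}.
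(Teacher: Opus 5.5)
Your proposal is correct and follows the same route as the paper's sketched proof. For $({\rm II})_1$ and $({\rm IV})_1$ you use the Fourier representation with $\|\widehat{\mu}\|_{L^\infty}\le 1$ and (\ref{spectlemma1}); for the remaining terms you use Corollary \ref{smoothingprop} together with (\ref{spectlemma2}), Proposition \ref{88888} and $\|\mu\|_{H^{-\sigma}}\lesssim I_{1-\sigma}(\mu)^{1/2}$, and your exponent bookkeeping correctly fills in what the paper leaves out. One small simplification: since $\phi$ is compactly supported, $\mu_\varepsilon=\mu*\phi_\varepsilon$ already lies in $C_c^\infty(\mathbb{R})\subset\mathcal{S}(\mathbb{R})$, so you do not need a density argument to apply the corollary.
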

\begin{proof}
     We sketch the proof. The idea is based on the estimates in Lemma \ref{ERI} and Lemma \ref{ERII}. For the terms $({\rm II})_1, ({\rm IV})_1$, we apply the Fourier representation method; while, for the other terms, we apply the smoothing inequality and the Proposition \ref{88888}.
\end{proof}

\subsection{Proof of Proposition \ref{Lemmaofconfcheck} }
 Let $\ell\gg100$ be sufficiently large and fix $A=2^{\ell-3}$ as in Lemma \ref{LEMMAMAINTERM}. Take $B\gg 2^{\frac{10(\kappa+1)\ell}{\mathbf{t}-(1-\sigma)}}$. Recall that the probability measure $\mu$ satisfies
\begin{align*}
     I_{\mathbf{t}}(\mu)\lesssim1\quad\text{and}\quad  \int_{|\xi|\in [A^{1/4},B^2]}\left|\widehat{\mu}(\xi)\right|\,d\xi\lesssim A^{-4},
\end{align*}
where $1-\sigma<\mathbf{t}<1$.

 By Lemma \ref{ERI}, Lemma \ref{ERII} and Lemma \ref{ERIII}, for all sufficiently large  $\ell$,
\begin{align*}
     &\left|({\rm I})\right|+ \left|({\rm II})\right|+ \left|({\rm III})\right|+ \left|({\rm IV})\right|\\&\lesssim 2^{-\ell} A^{-1/4}+2^{\kappa\ell} \cdot\left(\varepsilon^4B+B^{-3}+B^{\frac{1-\mathbf{t}-\sigma}{5}}\right)^{1/2}+2^{\kappa\ell}\cdot \left(\varepsilon^4B+B^{-3}+B^{\frac{1-\mathbf{t}-\sigma}{5}}\right).
\end{align*}
As a consequence, (\ref{configurationintegraldec}) is verified by taking $\liminf_{\varepsilon\to0}$, the suitable choice of $\ell$ and $B$, and Lemma \ref{LEMMAMAINTERM}. The proof is complete.

\subsection{Proof of  Theorem \ref{mainthm}} Let $\sigma_0>0$ and  $\kappa>0$ be the parameters given by the Corollary \ref{smoothingprop} and let $\sigma\in(0,\sigma_0)$. Pick a parameter $1-\sigma<\mathbf{t}<1$ and let $\varepsilon_N$ be given in Proposition \ref{constructESG}. If $E\subset[0,1]$ is a compact subset with large Hausdorff dimension $\HD(E)>1-\varepsilon_N$, then, from Proposition \ref{constructESG}, for any given $1<A<B$ there is a probability measure  a probability measure $\mu$ supported on the closure of $\mathbf{T}_\mathbf{Q}(E\cap\mathbf{Q})$  for some $\mathbf{Q}\in D^*_J$ satisfies \begin{align*}
        I_{\mathbf{t}}(\mu)\lesssim1\quad\text{and}\quad  \int_{|\xi|\in [A^{1/4},B^2]}\left|\widehat{\mu}(\xi)\right|\,d\xi\lesssim A^{-4}.
\end{align*}
By Proposition \ref{Lemmaofconfcheck}, we choose suitable $A<B$ such that for some sufficiently large $\ell>0$
\begin{align*}
    \liminf_{\varepsilon\to0}\iint \mu_\varepsilon(x-t)\mu_\varepsilon(x-\gamma(t))\chi_{(\ell)}(t)\,dt\,d\mu(x)\gtrsim A^{-1}.
\end{align*}
As a result, equipped with the energy condition of $\mu$ and the lower bound estimate of the configuration integral with the Proposition \ref{PropofPATTERN}, there is $y\in \mathbf{T}_\mathbf{Q}(E\cap\mathbf{Q})$ and non-zero $s\in\mathbb{R}$ so that
\begin{align*}
    \left\{y,\,y-s,\,y-\gamma(s)\right\}\subset \mathbf{T}_\mathbf{Q}(E\cap\mathbf{Q}).
\end{align*}
Henceforth, there is $j\geq J$ such that
\begin{align*}
    \left\{2^{-jN}y+L_{\mathbf{Q}},\,2^{-jN}(y-s)+L_{\mathbf{Q}},\,2^{-jN}(y-\gamma(s))+L_{\mathbf{Q}}\right\}\subset E\cap\mathbf{Q}\subset E,
\end{align*}
and the proof is fulfilled by taking 
\begin{align*}
    x=2^{-jN}y+L_{\mathbf{Q}},\quad t=2^{-jN}s,\quad \text{and}\quad \lambda=2^{jN}.
\end{align*}

\bigskip
\bigskip

\noindent {\bf Acknowledgment:}
The authors would like to thank Martin Hsu for introducing the smoothing inequality in \cite{FredMartin, studyguild}, and Fred Yu-Hsiang Lin for helpful discussions and for bringing the paper \cite{BK} to our attention.

The first author is supported by the National Center for Theoretical Sciences and the National Science and Technology Council of Taiwan under Grant No.~115-2124-M-002-009. The second and third authors are supported by the National Science and Technology Council (NSTC) under Grant No.~111-2115-M-002-010-MY5.

On behalf of all authors, the corresponding author declares that there is no conflict of interest.

\end{document}